\renewcommand{\d}{{\rm d} }
\def\R{{\mathbb R}}
\def\N{{\mathbb N}}
\def\T{{\mathbb T}}
\def\Q{{\mathbb Q}}
\def\P{{\mathbb P}}
\def\J{{\mathbb J}}
\def\I{{\mathbb I}}
\def\D{{\mathbb D}}
\def\divg{\mathop{\rm div}\nolimits}
\def\bpsi{\boldsymbol\psi}
\def\bphi{\boldsymbol\varphi}
\def\bn{{\bf n}}
\def\bv{{\bf v}}
\def\bw{{\bf w}}
\def\bu{{\bf u}}
\def\bU{{\bf U}}
\def\bq{{\bf q}}
\def\bx{{\bf x}}
\def\by{{\bf y}}
\def\bX{{\bf X}}
\def\bY{{\bf Y}}
\def\ba{{\bf a}}
\def\bA{{\bf A}}
\def\bB{{\bf B}}
\def\by{{\bf y}}
\def\bn{{\bf n}}
\def\bN{{\bf N}}
\def\bpsi{\boldsymbol\psi}
\def\btau{\boldsymbol\tau}
\def\bomega{\boldsymbol\omega}
\def\bOmega{\boldsymbol\Omega}
\def\bgamma{\boldsymbol\gamma}
\def\bphi{\boldsymbol\varphi}
\newtheorem{teo}{Theorem}[section]
\newtheorem{definition}{Definition}[section]
\newenvironment{proof}[1][Proof]{\medskip\noindent\textit{#1. }\upshape}{$\hfill \;\blacksquare $\medskip}
\newtheorem{lemma}{Lemma}[section]
\newtheorem{remark}{Remark}[section]
\newtheorem{proposition}{Proposition}[section]
\numberwithin{equation}{section}
\begin{document}

\date{}
\title{A uniqueness result for 3D incompressible fluid-rigid body interaction problem}
\author{{\large Boris Muha\textsuperscript{1}\thanks{The research of B.M. leading to these results has been supported by Croatian Science Foundation under the project IP-2018-01-3706}, \v S\'arka Ne\v casov\'a\textsuperscript{2}\thanks{The research of \v S.N. leading to these results has received funding from
the Czech Sciences Foundation (GA\v CR),
19-04243S
 and RVO 67985840.}, Ana Rado\v sevi\'c\textsuperscript{2,3}\thanks{The research of A.R. leading to these results has been supported by Croatian Science Foundation under the project IP-2018-01-3706, and by
the Czech Sciences Foundation (GA\v CR),
19-04243S}}\\
{\small \textsuperscript{1} Department of Mathematics}\\
{\small Faculty of Science}\\
{\small University of Zagreb, Croatia}\\
{\small borism@math.hr}\\
{\small $^2$ Institute of Mathematics, Czech Academy of Sciences,}\\
{\small \v Zitn\'a 25, 115 67 Praha 1, Czech Republic }\\
{\small matus@math.cas.cz}\\
{\small \textsuperscript{3} Department of Mathematics}\\
{\small Faculty of Economics and Business}\\
{\small University of Zagreb, Croatia}\\
{\small aradosevic@efzg.hr}
}
\maketitle
\begin{abstract}
We study a 3D nonlinear moving boundary fluid-structure interaction problem describing the interaction of the fluid flow with a rigid body. The fluid flow is governed by 3D incompressible Navier-Stokes equations, while the motion of the rigid body is described by a system of ordinary differential equations called Euler equations for the rigid body. The equations are fully coupled via dynamical and kinematic coupling conditions. We consider two different kinds of kinematic coupling conditions: no-slip and slip.
In both cases we prove a generalization of the well-known weak-strong uniqueness result for the Navier-Stokes equations to the fluid-rigid body system. More precisely, we prove that weak solutions that additionally satisfy the Prodi-Serrin $\mathrm{L}^r-\mathrm{L}^s$ condition are unique in the class of Leray-Hopf weak solutions.
\end{abstract}
%
%
%
%
\section{Introduction}
The fluid-structure interaction (FSI) systems are multi-physics systems that include a fluid and solid component. They are everyday phenomena with a wide range of applications (see e.g. \cite{FSIforBIO,ParticlesInFlow,BorSun} and references within). Equations that arise from modeling such phenomena are typically nonlinear systems of partial differential equations with a moving boundary. The simplest model for the structure is a rigid body. The position of the rigid body at any given time moment is determined by two vectors describing the translation of the center of mass and the rotation around the center of the mass. Therefore the dynamics of the rigid body is described by a system of six ordinary differential equations (Euler equations) describing the conservation of linear and angular momentum. In this paper we consider the system where the rigid body moves in 3D container filled with an incompressible Newtonian fluid. The fluid flow is governed by the 3D incompressible Navier-Stokes  equations. The fluid domain is determined by the position of the rigid body, and the Navier-Stokes equations are coupled with a system of Euler ODE's via a dynamic and a kinematic coupling condition. The dynamic coupling condition represents the balance of forces acting on the rigid body. On the other hand, there are several possibilities for the kinematic coupling condition. The no-slip condition, which postulates equality of the velocities of the fluid and of the rigid body, on the rigid body boundary, is the most commonly used in the literature since it is the simplest to analyze and it is a physically reasonable condition in most situations. However, in some situations, e.g. in close to contact dynamics (see e.g. \cite{GVHil3,GVHil2}) or in the case of rough surfaces (see e.g. \cite{Bucur2010,Masmoudi2010,JagerMikelic2001,muha2016existence}), the Navier's slip coupling condition may be more appropriate since it allows for the discontinuity of the velocity in the tangential component on the rigid body boundary. In this paper we treat both cases.

The fluid-rigid body system has been extensively studied in the last twenty years and some aspects of the well-posedness theory are now well established. The existence of the unique local-in-time (or small data) solution is known in both two and three dimensions, and for both the slip (\cite{SarkaSlipRigidStrong,WangFluidSolid}) and the no-slip (\cite{CumTak,DE2,GGH13,MaityTucsnak18,Takahashi03}) kinematic coupling condition. On the other hand, it is known that a weak solution of Leray-Hopf type exists and is global in time or exists until the moment of contact between the boundary of the container and the rigid body for the slip (\cite{ChemetovSarka,GVHil3}) and the no-slip case (\cite{ConcaRigid00,DE,Feireisl03,GunzRigid00,Starovoitov02}). The question of the uniqueness of weak solution is still largely open. Even for the classical case of the 3D Navier-Stokes equations, the uniqueness of the Leray-Hopf weak solution is an outstanding open problem (see e.g. \cite{GaldiNS00}). However, there are classical results of weak-strong uniqueness type (see e.g. \cite{GaldiNS00,Serrin63,Tem}) which state that the strong solution (defined in an appropriate way) is unique in the larger class of weak solutions. For the Navier-Stokes equations the weak solutions that satisfy Serrin's conditions are regular (\cite{Serrin62}). In this paper our goal is to extend these classical weak-strong uniqueness type results to the case of a fluid-rigid body system under the condition that the rigid body does not touch the boundary of the container. Namely, in the case of contact it has been shown that weak solutions are not unique (\cite{FeireislRigid02,Starovoitov03}) because there are multiple ways of extending the solution after the contact.

There are not many uniqueness results in the context of weak solutions to FSI problems. The principal difficulty is that different solutions are defined on different domains so classical techniques do not apply. The uniqueness of weak solutions for a fluid-rigid body system in the 2D case was proven in \cite{GlassSueur} for the no-slip case and in \cite{bravin2018weak} for the slip case. To the best of our knowledge, the only results of weak-strong uniqueness type in the context of FSI are \cite{chemetov2017weak,Disser2016,GMN1}. In \cite{Disser2016} the authors studied a rigid body with a cavity filled with a fluid, while in \cite{chemetov2017weak} the requirement for strong solution is higher, namely the time derivative and the second space derivatives of the fluid velocity are in $L^2$. Our result is a generalization of these results, and also of the 2D uniqueness result. In \cite{GMN1} the authors studied a rigid body with a cavity filled with a compressible fluid. Based on a relative entropy inequality the  weak-strong uniqueness property is shown.

The paper consists of three sections and an appendix. The first section is the Introduction, in which we formulate the problem, give the literature review and state the main results. In the second section we give the proof of the main result, while the technical results are stated and proved in the Appendix. In the third section we extend the result to the case where the rigid body and the fluid are coupled via the Navier's slip condition.

\subsection{Formulation of the problem}

Let $\Omega\subset\R^3$ be a bounded $C^{2,1}$ domain which represents a container containing a fluid and a rigid body, and let $S_0\subset\Omega$ be a connected open set representing the rigid body at the initial time $t=0$ with the center of mass denoted by $\bq_0\in \Omega$. We assume that $S_0$ is also a domain of class $C^{2,1}$. The motion of the rigid body is fully described by two functions $\bq:[0,T]\to \R^3$ and $\Q:[0,T]\to SO(3)$, where $SO(3)$ is the 3D rotation group, representing the position of the center of mass and the rotation around the center of mass at the time moment $t$, respectively. More precisely,  the trajectories of all points of the
body are described by an orientation preserving isometry
\begin{equation}	\label{is}
{\bB}(t,\by)=\bq(t)+\Q(t)(\by-\bq(0)),\qquad \by\in S_{0},\; t\in [0,T],
\end{equation}
and at time $t$ the body occupies the set
\begin{equation}	\label{set}
S(t)=\{\bx\in \R^{3}:\ \,\bx=\bB(t,\by),\quad \by\in S_{0}\}
=\bB(t,S_{0}),\qquad t\in [0,T].
\end{equation}
The fluid domain at time $t$ is defined by $\Omega_F(t)=\Omega\setminus\overline{S(t)}$. Since the domain changes in time, we introduce the following notation:
\begin{equation}\label{FluidDomainNonCylindircal}
(0,T)\times\Omega_F(t)=\bigcup_{t\in (0,T)}\{t\}\times\Omega_F(t).
\end{equation}
The fluid flow is described by the incompressible Navier-Stokes equations:
\begin{equation}	\label{NS}
\left.
\begin{array}{l}
\varrho_F\big (\partial_{t}\bu+(\bu\cdot \nabla )\bu\big )
= \divg \left( {\T}(\bu,p)\right) , \\
\divg\bu = 0
\end{array}
\right\} \;\mathrm{in}\;(0,T)\times\Omega_{F}(t),
\end{equation}
where $\bu$ is the fluid velocity, $\varrho_F$ is the fluid density, ${\T}=-p\I+2\mu \,\D\bu$ is the fluid Cauchy stress tensor,  $\D\bu=\frac{1}{2}\left( \nabla\bu + \left( \nabla \bu \right) ^{T}\right)$ is the deformation-rate tensor, $p$ is the fluid pressure and $\mu >0$ is the fluid viscosity.

The Eulerian velocity of the rigid body is given by:
\begin{equation}\label{RigidVelocity}
\bu_S(t,\bx):=
\partial_{t}\bB(t,\bB^{-1}(t,\bx))=\ba(t)+\P(t)(\bx-\bq(t))\qquad \text{for all}\quad
\bx\in S(t),
\end{equation}
where $\ba(t)\in\R^3$ and $\P(t)$ are the translation and angular velocity satisfying
\begin{equation}	\label{compatible}
	\frac{d\bq}{dt}=a
	\quad\text{and}\quad
	\frac{d\Q}{dt}\Q^{T}=\P
	\quad\text{in } \left[ 0,T\right]. 
\end{equation}
The angular velocity $\P$ is a skew-symmetric matrix and therefore there
exists a vector $\bomega=\bomega(t)\in \R^{3}\mathbb{\ \ }$
such that
\begin{equation}	\label{om}
\P(t)\bx=\bomega(t)\times \bx,\qquad \forall \bx\in \R^{3}.
\end{equation}

The equations of motion for the rigid body follow from the Newton's second law and are given by
\begin{equation}	\label{RigidBody}
\left.
\begin{array}{l}
m\frac{d^{2}}{dt^{2}}\bq = \mathbf{f}_L, \\
\frac{d}{dt}(\J\bomega) = \mathbf{f}_T
\end{array}
\right\} \;\mathrm{in}\;(0,T),
\end{equation}
where $m$ is the mass of the rigid body, $f_L$ and $f_T$ are the total force and torque acting on the rigid body, respectively, and $\J$ is the inertial tensor defined as follows:
\begin{equation*}
\J=\int_{S(t)}\varrho_S(|\bx-\bq(t)|^{2}\I-(\bx-\bq(t))\otimes (\bx-\bq(t)))\,\d\bx,
\end{equation*}
where $\varrho_S$ denotes the density of the rigid body.

\subsubsection{The coupling conditions}
The fluid and the rigid body are coupled via dynamic and kinematic coupling condition. The dynamic boundary condition is just the balance of forces and torques:
\begin{equation}\label{Dynamic}
\mathbf{f}_L=-\int_{\partial S(t)}{\T}(\bu,p) \bn\,\d\bgamma(\bx),\quad
\mathbf{f}_T=-\int_{\partial S(t)}(\bx-\bq(t))\times {\T}(\bu,p)\bn\,\d\bgamma(\bx),
\end{equation}
where $\bn=\bn(t,\bx)$ is the unit \textit{interior} normal on $\partial S(t)$ at point $\bx\in\partial S(t)$.
For the kinematic condition we will consider two different possibilities. The first one is the no-slip condition which says that the fluid and the rigid body velocities are equal at the rigid body boundary:
\begin{equation}\label{NoSlip}
\bu(t,\bx)=\bu_S(t,\bx),\quad \bx\in\partial S(t),\; t\in (0,T).
\end{equation}
The other possibility is the Navier's slip boundary condition which allows for the discontinuity of the tangential component of the velocity along the interface:
\begin{equation}\label{slip}
\left .
\begin{array}{c}
\big (\bu(t,\bx)-\bu_S(t,\bx)\big )\cdot\bn (t,\bx)=0
\\
\beta \big (\bu_S(t,\bx)-\bu(t,\bx)\big )\cdot\btau (t,\bx)=\T(\bu(t,\bx))\bn(t,\bx)\cdot\btau(t,\bx)
\end{array}
\right \}\quad \bx\in\partial S(t),\; t\in (0,T),
\end{equation}
where $\beta>0$ is the friction coefficient at $\partial S(t)$, and $\btau$ is a unit tangent on $\partial S(t)$. The system is complemented with the no-slip boundary condition $\bu=0$ on $\partial\Omega$ and the initial conditions. For simplicity of notation, we assume $\varrho_F=\mu=m=1$ and $\varrho_S$ is constant.

To summarize, we consider the following fluid-rigid body interaction problem:
\\
find $(\bu,p,\bq,\bomega)$ such that
\begin{equation}\label{FSINoslip}
\begin{array}{l}
\left.
\begin{array}{l}
\partial_{t}\bu+(\bu\cdot \nabla )\bu
= \divg \left( {\T}(\bu,p)\right) , \\
\divg\bu = 0
\end{array}
\right\} \;\mathrm{in}\;\bigcup_{t\in(0,T)} \{t\}\times\Omega_{F}(t),
\\
\left.
\begin{array}{l}
\frac{d^{2}}{dt^{2}}\bq = -\int_{\partial S(t)}{\T}(\bu,p) \bn\,\d\bgamma(\bx), \\
\frac{d}{dt}(\J\bomega) = -\int_{\partial S(t)}(\bx-\bq(t))\times {\T}(\bu,p)\bn\,\d\bgamma(\bx)
\end{array}
\right\} \;\mathrm{in}\;(0,T),
\\
\bu =\bq'+\bomega\times (\bx-\bq),\quad \mathrm{on}\;\bigcup_{t\in(0,T)} \{t\}\times\partial S(t),
\\
\bu = 0 \quad \mathrm{on}\, \partial \Omega,
\\
\bu(0,.)=\bu_{0}\qquad \mathrm{in}\;\Omega,\quad \bq(0)=\bq_{0},\quad \bq^{\prime }(0)=\ba_{0},\quad
\bomega(0)=\bomega_{0}.
\end{array}
\end{equation}
The version of problem \eqref{FSINoslip} where the no-slip condition \eqref{NoSlip} is replaced by the slip condition \eqref{slip} will be called problem \eqref{FSINoslip}$_{\rm slip}$.

In order to state the main result of this paper,  we need to define the notion of weak solutions to the system \eqref{FSINoslip}. First we define a function space:
\begin{equation}\label{FluidFS}
V(t)=\{\bv\in H^1_0(\Omega):\divg\bv=0\;{\rm in}\; \Omega,\; \D\bv=0\;{\rm in}\; S(t)\}.
\end{equation}
\begin{remark}
	The condition $\D\bv=0$ in $S(t)$ is equivalent to the condition that $\bv(t)$ is rigid on $S(t)$, i.e. there exist $\ba(t),\bomega(t)\in\R^3$ such that $\bv(t,\bx)=\ba(t)+\bomega(t)\times(\bx-\bq(t))$, $\bx\in S(t)$.
\end{remark}
\begin{definition}\label{definition}
	The couple $\left(\bu, \bB\right)$
	is a weak solution to the system \eqref{FSINoslip} if the following conditions
	are satisfied:
	
	1) The function $\bB(t,\cdot ):\R^{3}\rightarrow \R^{3}$ \ is an orientation preserving isometry given by the formula \eqref{is}, which defines a time-dependent set $S(t)=\bB(t,S_0)$.
	The isometry $\bB$ is compatible with $\bu=\bu_S$ on $S(t)$ in the following sense: the rigid part of the velocity $\bu$, denoted by $\bu_S$, satisfies condition \eqref{RigidVelocity}, and 
	$\bq,\; \Q$ are absolutely continuous on $\left[ 0,T\right]$ and satisfy \eqref{compatible} and \eqref{om}.
	
	2) The function $\bu\in L^{2}(0,T;V(t))\cap L^{\infty
	}(0,T;L^2(\Omega ))$ satisfies the integral equality
	\begin{multline} \label{weak}
	\int_{0}^{T}\int_{\Omega}\{
	\bu \cdot \partial_{t}\bpsi
	+ (\bu\otimes \bu) :\D\bpsi
	- 2\,\D\bu:\D\bpsi\,\}\, \d\bx\d t
	- \int_{\Omega }\bu(T)\bpsi(T)\,\d\bx
	= - \int_{\Omega }\bu_{0}\bpsi(0)\,\d\bx,
	\end{multline}
	which holds for any test function $\boldsymbol{\psi }\in H^1(0,T;V(t))$.
	
	3) The  energy inequality
	\begin{equation*}	\label{EnergyInequality}
		\frac{1}{2}\|\bu(t)\|_{L^2(\Omega)}^2
		+ 2\int_{0}^{t}
		\int_{\Omega_{F}(\tau)}\,|\D \bu|^{2}
		\,\d\bx\,\d\tau
		\leq
		\frac{1}{2}\|\bu_0\|_{L^2(\Omega)}^2.
	\end{equation*}
	holds for almost every $t\in(0,T)$.
\end{definition}
\begin{remark}
		By using the standard cut-off argument (cf. \cite{GaldiNS00}, Lemma 2.1) it can be shown that the second condition is equivalent to the following statements:
		\begin{enumerate}
		\item The function $\bu\in L^{2}(0,T;V(t))\cap L^{\infty
		}(0,T;L^2(\Omega ))$ satisfies the integral equality
		\begin{equation*}
		\int_{0}^{T}\int_{ \Omega}\{
		\bu \cdot \partial_{t}\bpsi
		+ (\bu\otimes \bu) :\D\bpsi
		- 2\,\D\bu:\D\bpsi\,\}\, \d\bx\d t
		= - \int_{\Omega }\bu_{0}\bpsi(0)\,\d\bx,
		\end{equation*}
		which holds for any test function $\boldsymbol{\psi }\in H^1(0,T;V(t))$ satisfying $\bpsi(T,\cdot)=0$.
		
		\item
		The function $\bu\in L^{2}(0,T;V(t))\cap L^{\infty
		}(0,T;L^2(\Omega ))$ satisfies the integral equality
		\begin{multline} \label{weak2}
		\int_{0}^{t}\int_{\Omega}\{
		\bu \cdot \partial_{t}\bpsi
		+ (\bu\otimes \bu) :\D\bpsi
		- 2\,\D\bu:\D\bpsi\,\}\, \d\bx\d t\\
		- \int_{\Omega }\bu(t)\bpsi(t)\,\d\bx
		= - \int_{\Omega }\bu_{0}\bpsi(0)\,\d\bx,
		\end{multline}
		which holds for any test function $\boldsymbol{\psi }\in H^1(0,T;V(t))$ and $t\in(0,T)$.
		\end{enumerate}
\end{remark}
	
\begin{remark}
		The existence of weak solutions in the sense of Definition \ref{definition} has been studied in \cite{ConcaRigid00,DE,Feireisl03,GunzRigid00,Starovoitov02}. Even though the concept of weak solution in each of these works is defined in a slightly different context, for the case of the incompressible Navier-Stokes equations with constant density all these definitions are equivalent. In \cite{ConcaRigid00,GunzRigid00} the authors use a coordinate system which moves with the rigid body. This formulation is equivalent to our formulation (Eulerian) by a suitable change of variables. Namely, since the displacement is rigid, the change of variables is regular enough ($W^{1,\infty}$ in time and smooth in space) and therefore it yields an equivalent weak formulation. Concerning the regularity of the test functions, they can be taken to be smooth and then the weak formulation holds for all test functions used in Definition \ref{definition} by a standard density argument. In \cite{DE,Feireisl03} the position of the rigid body is tracked via conservation of mass for the global density (which can be then generalized to the compressible or variable density case). We defined the current position of the rigid body explicitly via the isometry $\bB$. However, in the incompressible, constant density case these approaches are equivalent (see e.g. Lemma A.7. in \cite{ChemetovSarka}). The $2D$ case is treated in \cite{Starovoitov02}.
\end{remark}

The weak formulation of \eqref{FSINoslip}$_{\rm slip}$ is defined in Section \ref{Sec:slip}. Now we can state the main result of this paper.

\begin{teo}\label{WeakStrong}
	Let $(\bu_{1},\bB_{1})$ and $(\bu_{2},\bB_{2})$ be two weak solutions corresponding to the same data.
	Assume that $d(S_i(t),\partial\Omega)>\delta_i$, $i=1,2$, for some constants $\delta_i>0$.
	If $\bu_{2}$ satisfies the following condition:
	\begin{equation}\label{LrLs}
	\bu_{2}\in L^r(0,T;L^s(\Omega))\quad
	\text{ for some } s,r \text{ such that }\quad
	\frac{3}{s}+\frac{2}{r}=1,\, s\in(3,+\infty)
	\end{equation}
	then
	\begin{equation*}
	(\bu_{1},\bB_{1}) = (\bu_{2},\bB_{2}).
	\end{equation*}
\end{teo}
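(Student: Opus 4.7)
The plan is to adapt the classical Leray--Serrin weak-strong uniqueness argument to the moving-domain setting. The core obstacle is that $\bu_1$ and $\bu_2$ are defined on the different time-dependent fluid domains $\Omega_{F,1}(t)=\Omega\setminus\overline{S_1(t)}$ and $\Omega_{F,2}(t)=\Omega\setminus\overline{S_2(t)}$, so a literal subtraction $\bu_1-\bu_2$ is not even defined. My first step is therefore to construct a time-dependent diffeomorphism $\Phi(t,\cdot):\Omega\to\Omega$ that maps $S_1(t)$ onto $S_2(t)$, coincides with the rigid motion $\bB_2(t)\circ\bB_1(t)^{-1}$ in a neighborhood of $S_1(t)$, equals the identity in a neighborhood of $\partial\Omega$, and interpolates smoothly in between. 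The non-contact hypothesis $d(S_i(t),\partial\Omega)>\delta_i$ is exactly what provides the uniform collar needed for this construction, and it also gives $L^\infty$-in-time control of $\nabla\Phi$, $\partial_t\Phi$, $\nabla^2\Phi$.

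Second, I would transport $\bu_2$ to the frame of $\bu_1$ by a Piola-type transform $\tilde\bu_2(t,\by):=(\nabla\Phi(t,\by))^{-1}\bu_2(t,\Phi(t,\by))$, chosen so that $\tilde\bu_2$ is divergence-free on $\Omega$, vanishes on $\partial\Omega$, and is rigid on $S_1(t)$; in particular $\tilde\bu_2\in H^1(0,T;V_1(t))$ and is therefore an admissible test function in the weak formulation \eqref{weak2} for $\bu_1$. By an analogous construction I transport $\bu_1$ into the frame of $\bu_2$ to use in the weak formulation of $\bu_2$; or, equivalently, upgrade the Prodi--Serrin regularity of $\bu_2$ to an energy equality (via the appendix's technical lemmas) so it can be subtracted directly after pullback. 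Combining these with the energy inequality for $\bu_1$ and the energy equality for $\bu_2$, I expect to arrive, after absorbing quadratic terms in $\|\D\bu_1-\D\tilde\bu_2\|_{L^2}^2$ into the left-hand side and exploiting Korn's inequality on the non-cylindrical domain, at an estimate of the form
\begin{equation*}
E(t)+\int_0^t D(\tau)\,\d\tau \;\le\; \int_0^t \bigl(C_1\|\bu_2(\tau)\|_{L^s(\Omega)}^r+C_2\bigr)\,E(\tau)\,\d\tau,
\end{equation*}
where $E(t)$ measures the distance between the two solutions (through the $L^2$-norm of $\bu_1-\tilde\bu_2$ and a term comparing $\bB_1$ to $\bB_2$) and $D(\tau)\ge 0$. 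The Prodi--Serrin scaling $2/r+3/s=1$ is exactly what makes the integrand lie in $L^1(0,T)$, so Gronwall's lemma yields $E\equiv 0$, hence $\bB_1=\bB_2$ and then $\bu_1=\bu_2$.

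The main obstacle I anticipate is Step 3, the handling of the commutator terms produced by the change of variables. The transformed field $\tilde\bu_2$ does not solve the Navier--Stokes system on $\Omega_{F,1}(t)$; plugging it into the weak formulation for $\bu_1$ and performing the subtraction unavoidably generates extra bilinear and trilinear terms involving $\partial_t\Phi$, $\nabla\Phi-\I$ and $\nabla^2\Phi$, as well as the convective commutator $(\bu_1\cdot\nabla)\tilde\bu_2-\widetilde{(\bu_2\cdot\nabla)\bu_2}$. Each of these must be organized into either (i) a dissipative term that can be absorbed by $D(\tau)$, or (ii) a remainder bounded by $\|\bu_2\|_{L^s}\|\bu_1-\tilde\bu_2\|_{L^{2s/(s-2)}}\|\nabla(\bu_1-\tilde\bu_2)\|_{L^2}$ and then by Gagliardo--Nirenberg--Ladyzhenskaya interpolation by $\|\bu_2\|_{L^s}^r E+\tfrac12 D$, or (iii) a term estimated by a smallness factor coming from $|\bB_1-\bB_2|$ which is itself controlled by $E$. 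Closing this loop without losing a derivative on $\bu_1$, while keeping the functional framework consistent on a non-cylindrical domain, is the technical heart of the argument and the step where the assumption $s>3$ (rather than $s=3$) is essentially used.
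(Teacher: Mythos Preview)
Your overall strategy coincides with the paper's: transport $\bu_2$ onto the frame of $\bu_1$ via a volume-preserving diffeomorphism that is rigid near the body and the identity near $\partial\Omega$, derive a transformed weak formulation, combine the energy inequality for $\bu_1$ with an energy equality for the transformed field plus cross-testing identities, and close by Gronwall using the Prodi--Serrin scaling. Two genuine gaps, however, would stop the argument as you have written it.

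First, you assert that $\tilde\bu_2\in H^1(0,T;V_1(t))$ and is hence an admissible test function in \eqref{weak2}. A Leray--Hopf weak solution lies only in $L^\infty(0,T;L^2)\cap L^2(0,T;H^1)$, so $\tilde\bu_2$ has no time derivative in $L^2$ and cannot be inserted directly. The paper resolves this by building a time-mollification \eqref{RegDef1}--\eqref{RegDef3} adapted to the moving domain (pull back to the fixed domain, convolve in $t$, push forward) and proving a generalized Reynolds transport identity (Lemma~\ref{Reynolds_gen}) to pass to the limit in the terms $\int\bu_1\cdot\partial_t\bU_2^h+\int\bU_2\cdot\partial_t\bu_1^h$.

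Second, and more seriously, your list of commutator terms omits the pressure. To obtain a usable transformed weak formulation for $\bU_2$ one pulls back the test functions by the \emph{adjoint} map $\bphi=\nabla\widetilde{\bX}_1^T\bpsi$, so that $\bu_2\cdot\bphi=\bU_2\cdot\bpsi$ pointwise; but this map does \emph{not} preserve the divergence-free condition, and a term $\langle(\mathcal G-\nabla)P_2,\bpsi\rangle$ survives in the transformed equation (Proposition~\ref{WeakU2}). Controlling it requires (i) constructing a pressure associated to the weak solution on the moving fluid domain (Theorem~\ref{pressure}), which is not available in the literature for the fluid--rigid body system, and (ii) proving the local-in-time regularity $t\nabla p_2\in L^p(0,T;L^q)$ via maximal $L^p$--$L^q$ regularity for the linear fluid--rigid body Stokes problem (Proposition~\ref{pressure_regularity}, Lemma~\ref{strongStokes}). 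Only with this in hand can one bound the pressure commutator by $C(|\ba_1-\bA_2|+|\bomega_1-\bOmega_2|)\,\|t\nabla P_2\|_{L^q}\,\|\bpsi\|_{L^{q'}}$ and feed it into the Gronwall loop (Lemma~\ref{estimate_pressure}). The paper flags this pressure step as the main technical novelty over both the classical Navier--Stokes argument and the earlier fluid--rigid body result with stronger regularity assumptions; without it your estimate does not close.
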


\begin{remark}
	The condition \eqref{LrLs} is called a Prodi-Serrin condition.
\end{remark}

An analogous Theorem is also proven for \eqref{FSINoslip}$_{\rm slip}$ in Section \ref{Sec:slip}. These results are generalizations of the uniqueness results from \cite{bravin2018weak,chemetov2017weak,GlassSueur}. Namely, in \cite{chemetov2017weak} an analogous result is proven, but with a higher regularity assumption on $\bu_2$ ($\bu_2\in H^1(0,T;L^2(\Omega)\cap L^2(0,T;H^2(\Omega))$). In \cite{GlassSueur} and \cite{bravin2018weak} the 2D problem was studied for the no-slip and slip case, respectively. The uniqueness result for the 2D case follows from Theorem \ref{WeakStrong} by interpolation in the same way as in the classical case of the Navier-Stokes equations.

The basic strategy of the proof of Theorem \ref{WeakStrong} is analogous to the Navier-Stokes (see e.g. \cite{GaldiNS00,Serrin63}). However, there are considerable technical difficulties connected to the fact that the domain of the fluid is not known apriori and therefore we have to compare solutions which are apriori given on different domains. Another difficulty connected to the moving domain is the construction of an appropriate regularization operator (in the time variable). Namely, since the domain is changing, we cannot use the standard convolution operator. Finally, due to the change of variables we have to work with the weak formulation that includes the pressure variable. It seems that the existence of pressure connected to a weak solution of a fluid-rigid body system is missing from the literature. Even though for the proof of the weak-strong uniqueness result it is enough to prove the local-in-time existence of regular pressure (Proposition \ref{pressure_regularity}), we included the existence result for the pressure associated to the weak formulation, Theorem \ref{pressure}, because we believe it might be of independent interest. Here we emphasize that the local-in-time regularity is only needed for the estimate of the additional pressure term which is present due to the moving boundary. This term is not present in the standard weak-strong uniqueness proof for the Navier-Stokes equations. The main technical tool is a non-local change of variables (see e.g. \cite{GGH13,Takahashi03}) based on the idea from \cite{inoue1977existence} which can be used to map the problem to a fixed domain and to construct an appropriate regularization operator. In the slip case (Theorem~\ref{WeakStrongslip}) we do not prove the existence of the pressure associated with a weak solution, but proceed directly to prove the local-in-time existence of a regular pressure (Proposition~\ref{pressure_regularity_slip}).

\section{Proof of the main theorem}
\label{Sec:MainProof}

In this section we give the main steps of the proof of Theorem \ref{WeakStrong}, while proofs of some technical results are relegated to the Appendix. Let $(\bu_i,\bB_i)$, $i=1,2$, be weak solutions satisfying the assumptions of Theorem \ref{WeakStrong}, and let $p_i$, $\ba_i$, $\bomega_i$ be the pressure, the rigid body translation velocity and the rigid body angular velocity connected to the solutions $(\bu_i,\bB_i)$. Then, by \eqref{is} the isometry $\bB_{i}$, which defines the domain of the rigid body $S_i(t)=\bB_{i}(t,S_0)$, is given by
$$
\bB_{i}(t,\by) = \bq_{i}(t) + \Q_{i}(t)(\by-\bq_{0}),
$$
where $\bq_{i}'(t)=\ba_{i}(t)$ and $\Q_{i}'(t)\Q_{i}^T(t)=\P_{i}(t)$
for the skew-symmetric $\P_{i}(t)$ associated with $\bomega_{i}(t)$ (see \eqref{om}):
$$
\P_{i}(t)\bx = \bomega_{i}(t)\times\bx,\quad\forall\bx\in\R^3.
$$
We denote the fluid domain by $\Omega_{F}^{i}(t)=\Omega\setminus\overline{S_{i}(t)}$
and the function spaces
\begin{equation*}
V_i(t)=\{\bv\in H^1_0(\Omega):\divg\bv=0,\; \D\bv=0\;{\rm in}\; S_i(t)\}.
\end{equation*}

\begin{figure}
	\label{fig:change1}
	\centering
	\includegraphics{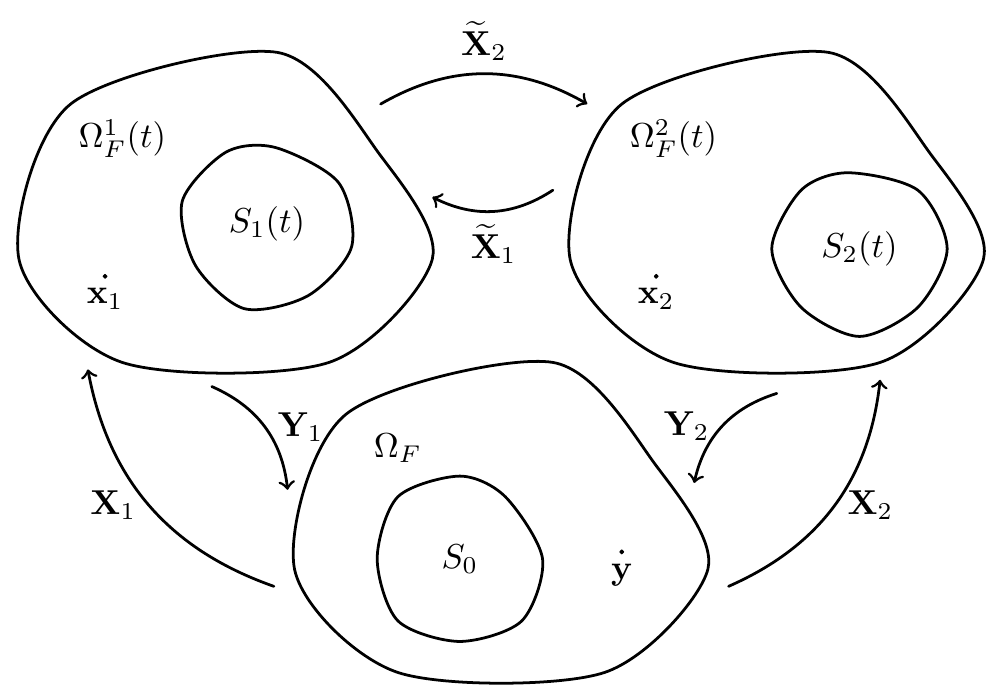}
	\caption{Change of coordinates}
\end{figure}

In order to compare two solutions, we need to transfer them to the same domain (see Figure 1). We follow the strategy from \cite{chemetov2017weak,GlassSueur} and use a local version of vector change of variables introduced for the study of the Navier-Stokes equations in a non-cylindrical domain by Innoue and Wakimoto \cite{inoue1977existence}. The construction of the local change of variables is now standard in the study of fluid-rigid body problems (see e.g. \cite{GGH13,Takahashi03}) so we omit it here and recall the basic facts in the Appendix, Section \ref{Sec:LT}.

Let $\bX_i$, $i=1,2$ be the change of coordinates associated to the solution $(\bu_i,p_i,\ba_i,\bomega_i)$ in the way described in the Appendix \ref{Sec:LT}, and let us denote the corresponding inverse transformation by
$$
\bY_{i}(t,\cdot )=\bX_{i}(t,\cdot )^{-1},\qquad i=1,2.
$$
In the neighborhood of $S_i(t),\,i=1,2 $, the transformations are rigid:
\begin{equation*}
\bX_{i}(t,\by)=\bq_{i}(t)+\Q_{i}(t)(\by-\bq_{0}),\qquad i=1,2
\end{equation*}
\begin{equation*}
\bY_{i}(t,\bx_{i})=\bq_{0}+{\Q}_{i}^{T}(t)(\bx_{i}-\bq_{i}(t)),\qquad i=1,2
\end{equation*}
We define the transformations $\widetilde{\bX}_1$ and $\widetilde{\bX}_2$ in the following way:
\begin{equation*}
\widetilde{\bX}_{1}(t,\bx_{2})=\bX_{1}(t,\bY_{2}(t,\bx_{2})),
\end{equation*}
$$
\widetilde{\bX}_{2}(t,\cdot )=\widetilde{\bX}_{1}(t,\cdot
)^{-1}
$$

In the neighborhood of $S_{i}(t)$ the transformations $\widetilde{\bX}_{i}$ are also rigid and are therefore given by the following expressions:
\begin{equation*}
\begin{split}
\widetilde{\bX}_{1}(t,\bx_{2})&=\bq_{1}(t)+\Q^{T}(t)(\bx_{2}-\bq_{2}(t))\quad
\text{in the neighborhood of }S_{2}(t),
\\
\widetilde{\bX}_{2}(t,\bx_{1})&=\bq_{2}(t)+\Q(t)(\bx_{1}-\bq_{1}(t))\quad
\text{in the neighborhood of }S_{1}(t),
\end{split}
\label{RigidTransf}
\end{equation*}
where ${\Q}={\Q}_{2}{\Q}_{1}^{T}$.	

Finally, we define the transformed second solution $(\bu_{2},p_{2},\ba_{2},\bomega_{2})$:
\begin{equation}
\left\{
\begin{array}{l}
\bU_2(t,\bx_1)=\nabla\widetilde{\bX}_1(t,\widetilde{\bX}_2(t,\bx_1))\bu_2(t,\widetilde{\bX}_2(t,\bx_1)),
\\
P_2(t,\bx_1)=p_2(t,\widetilde{\bX}_2(t,\bx_1)),
\\
\bA_{2}(t)=\Q^{T}(t)\ba_{2}(t),
\\
\bOmega_{2}(t)=\Q^{T}(t)\bomega_{2}(t),
\\
\mathcal{T}({\bU_2}(t,\by),P_2(t,\by))=\Q^{T}(t)
\T(\Q(t)\bU_2(t,\by),P_2(t,\by))\Q(t).
\end{array}
\right.  \label{eq:Transformed}
\end{equation}
It is easy to see that this change of variables is a volume preserving diffeomorphism, hence the transformed velocity satisfies the divergence-free condition (details can be found in Appendix \ref{Sec:LT} or \cite{inoue1977existence}, Proposition 2.4.).

\subsection{Weak formulation of the transformed solution}
\label{Sec:TransformedWeak}

The first step in the proof is to derive the weak formulation satisfied by the solution $(\bU_2,P_2,\bA_2,\bOmega_2)$.

Since the transformed solution will depend on $P_2$ because of \eqref{NSTransformed}, first we prove the existence of the pressure attached to a weak solution defined in Definition \ref{definition}. Such result is standard in the theory of incompressible Navier-Stokes equations but is, to the best of our knowledge, missing from the literature on fluid-rigid body systems.

\begin{teo}\label{pressure}
	Let $(\bu_2,\bB_2)$ be a weak solution to the system \eqref{FSINoslip}. Then, there exist functions {$p_{0}^2\in L^{\infty}(0,T;L^2(\Omega_{F}^2(t)))$ and $p_1^2\in L^{\frac{3}{4}}(0,T;L^2(\Omega_{F}^2(t)))$} such that for all $\bphi\in H^1_0(0,T;H^1_0(\Omega))$ satisfying $\D\bphi=0$ in $S_2(t)$, the following equality holds:
	\begin{equation} \label{weak3}
	\begin{split}
	\int_{0}^{T}\int_{\Omega}\bu_2\cdot\partial_t\bphi
	\,\d\bx \,\d\tau
	&+
	\int_{0}^{T}\int_{\Omega_F^2(\tau)} \{
	(\bu_2\otimes \bu_2) :\nabla\bphi
	-\,2\,\D\bu_2:\D\bphi \}
	\,\d\bx \,\d\tau
	\\
	&= \int_{0}^{T}\int_{\Omega_F^2(\tau)} \big(p_{0}^2\,\divg\partial_{t}\bphi - p_{1}^2\,\divg\bphi\big) \,\d\bx \,\d\tau.
	\end{split}
	\end{equation}
\end{teo}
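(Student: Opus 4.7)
The strategy is a de Rham-type recovery of the pressure, carried out after freezing the fluid domain. Since the weak formulation \eqref{weak} is stated only on test functions that are rigid on the moving body $S_2(t)$, my plan is: (i) pull the weak identity back to a cylindrical reference domain via the change of coordinates $\bX_2$ from Appendix \ref{Sec:LT}; (ii) recover a distributional pressure on the fixed fluid domain by a Ne\v{c}as/Bogovskii argument; (iii) split it into a primitive-in-time part and a remainder with the claimed integrability; (iv) transport the pressure back to the physical moving domain.

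Under the pull-back by $\bX_2$, the transformed velocity $\bU_2$ satisfies a weak identity on $(0,T)\times\Omega$ with the body fixed at $S_0$, structurally the same as \eqref{weak} up to lower-order corrections generated by $\partial_t \bX_2$ and $\nabla \bX_2$. Define a functional $L(\bphi)$ as the collection of all terms appearing in the transported weak identity, acting on $\bphi \in H^1_0(0,T;H^1_0(\Omega))$ with $\D\bphi=0$ on $S_0$. By hypothesis, $L(\bphi)=0$ whenever $\bphi$ is additionally divergence-free in $\Omega$. Using the Bogovskii operator on the Lipschitz domain $\Omega\setminus\overline{S_0}$ to decompose a general $\bphi$ as a divergence-free piece (rigid on $S_0$) plus a corrector carrying $\divg\bphi$, a standard de Rham-type argument produces a distribution $\Pi$ such that $L(\bphi)=\int_0^T\int \Pi\,\divg\bphi$.

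To upgrade $\Pi$ to functions with prescribed integrability, I would use the primitive-in-time trick. Setting $\Pi_0(t) := \int_0^t \Pi(\tau)\,d\tau$ and integrating the momentum identity over $(0,t)$ gives
\[
\nabla \Pi_0(t) = \bU_2(0) - \bU_2(t) + \int_0^t \bigl(\text{viscous} + \text{convective} + \text{transport corrections}\bigr)\,d\tau
\]
in distributions. Since the right-hand side lies in $L^\infty(0,T;H^{-1})$, the Ne\v{c}as inequality (after normalizing the mean) gives $\Pi_0 \in L^\infty(0,T;L^2(\Omega\setminus\overline{S_0}))$, which will serve as $p_0^2$. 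The remainder $\Pi_1 := \Pi - \partial_t \Pi_0$ arises from the ``instantaneous'' response to the convective term and the transport corrections; its time integrability follows from the Stokes pressure estimate on the fixed reference fluid domain, combined with interpolation of $\bU_2 \in L^\infty(L^2)\cap L^2(H^1)$ via Sobolev embedding to control the convective contribution in the appropriate Lebesgue class. Transporting $\Pi_0$ and $\Pi_1$ back to the moving domain via $\bY_2$ and using that $\bX_2$ is rigid in a neighborhood of the body then yields $p_0^2$ and $p_1^2$ in the stated classes on $\Omega_F^2(t)$, and verifying \eqref{weak3} becomes a bookkeeping step once one integrates by parts the $\partial_t \Pi_0$ contribution in time against $\divg\bphi$.

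The main obstacle is the compatibility of the time primitive with the moving boundary. On a fixed domain the de Rham + primitive construction is classical, but here the time primitive of the pressure a priori lives on a time-dependent fluid domain, so it is essential to work on the fixed reference configuration and only transport at the end. Because $\bX_2$ is merely $W^{1,\infty}$ in time, the transport produces time-dependent factors whose derivatives are only in $L^\infty$; tracking carefully how these factors distribute between the primitive component $p_0^2$ and the instantaneous component $p_1^2$ — without degrading their regularity or mixing the two — will be the technical heart of the argument.
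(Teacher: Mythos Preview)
Your proposal is correct and follows essentially the same route as the paper: transform to the fixed reference fluid domain via $\bX_2$, recover the pressure there by a de Rham/primitive-in-time argument, then transport back. The paper packages your step (iii) by citing Neustupa's results on the fixed domain (obtaining $p_0\in L^\infty_t L^2_x$ and $p_1\in L^{4/3}_t L^2_x$), whereas you spell out the primitive construction explicitly; and where you invoke Bogovskii to split a test function into a divergence-free piece (rigid on $S_0$) plus a corrector supported in the fluid, the paper achieves the same splitting up front via a decomposition $V(t)=\widetilde H^1_{0,\sigma}(S(t))+W(S(t))$ before transforming. These are cosmetic differences; the substance is the same.
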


\begin{remark}
	Below we will write $p_2=\partial_{t}p_0^2+p_1^2$, where $\partial_{t}p_0^2$ is the distributional derivative (with respect to t) of $p_0^2$.
	As in the Navier-Stokes case (see \cite{simon1999existence}), since $\Omega_{F}^2(t)$ is a Lipschitz domain we have that $p_2\in W^{-1,\infty}(0,T;L^2(\Omega_{F}^2(t)))$.
\end{remark}

We postpone the proof of Theorem \ref{pressure} to Section \ref{Sec:Pressure} because in the proof we will use a similar construction as the one used in the next proposition where we transform the weak formulation \eqref{weak3} to the domain $\Omega^1_F(t)$  via the transformation $\widetilde{\bX}_2$. We emphasize that the proof of Theorem \ref{pressure} does not use Proposition \ref{WeakU2} and the proof is postponed just for presentational purposes.
By \eqref{eq:Transformed} we have:
\begin{displaymath}
\bU_2(t,\bx_1) = \nabla\widetilde{\bX}_1(t,\widetilde{\bX}_2(t,\bx_1))\,\bu_2(t,\widetilde{\bX}_2(t,\bx_1)),
\end{displaymath}
i.e.
\begin{displaymath}
\bu_2(t,\bx_2) = \nabla\widetilde{\bX}_2(t,\widetilde{\bX}_1(t,\bx_2))\,\bU_2(t,\widetilde{\bX}_1(t,\bx_2)).
\end{displaymath}
Since $\widetilde{\bX}_1\in W^{1,\infty}(0,T; C^{\infty}(\Omega))$, the transformed velocity $\bU_2$ belongs to $L^2(0,T;V_1(t))\cap L^{\infty}(0,T;L^2(\Omega))$ and satisfies the Prodi-Serrin condition.

\begin{proposition}\label{WeakU2}
	Let
	\begin{equation*}
	\left\langle \widetilde{F}(t), \bpsi\right\rangle = \left\langle (\mathcal{L}-\Delta)\bU_{2}
	+\mathcal{M}\bU_{2}
	+\tilde{\mathcal{N}}\bU_{2}
	, \bpsi\right\rangle,
	\quad \forall\bpsi\in H^{1}(\Omega_{F}^{1}(t)),
	\end{equation*}
	and
	\begin{equation*}
	\widetilde{\bomega}\times \bx = {\Q}^{T}{\Q}^{\prime }\bx,
	\end{equation*}
	where $\mathcal{L}$, $\mathcal{M}$, $\tilde{\mathcal{N}}$
	are linear operators corresponding to the terms defined by \eqref{transformed_laplace}, \eqref{transformed_time} and \eqref{transformed_convective}, respectively.
	Then, the transformed solution
	$(\bU_2,\bA_2, \bOmega_2)$ 	
	satisfies the equality
	\begin{equation} \label{transformed_weak}
	\begin{split}
	&\int_{0}^{T}\int_{\Omega}\bU_2\cdot\partial_t\boldsymbol{\psi }\, \d\bx_1\,\d t
	+\int_{0}^{T}\int_{\Omega_F^1(t)}\Big((\bu_1\otimes \bU_2) :\nabla\boldsymbol{\psi }^T
	-\,2\,\D\bU_2:\D\boldsymbol{\psi }\,\Big)\, \d\bx_1\, \d t
	\\
	&\qquad -\int_{0}^{T}\int_{\Omega _{F}^{1}(t)}(\bU_{2}-\bu_{1})\cdot \nabla \bU_{2}\cdot \boldsymbol{\psi }\ \,\d\bx_{1}\,\d t
	\\
	& = \int_{0}^{T} \langle \widetilde{F}(t),\boldsymbol{\psi }(t)\rangle\, \d t
	+ \left\langle \widetilde{\mathcal{G}}P_2,\bpsi\right\rangle
	+ \int_{0}^{T} \int_{S_1(t)} \widetilde{\bomega}\times \bU_{2}\cdot \bpsi-\bu_{1}\times\bU_{2}\cdot\bpsi_{\omega}\,\d\bx_1\,\d t
	\end{split}
	\end{equation}
	for any test function $\bpsi\in H_0^1(0,T;V_1(t))$, i.e. $\bpsi$ is rigid on $S_1(t)$:
	\begin{equation*}
	\boldsymbol{\psi }(t,\mathbf{x}_1)=\boldsymbol{\psi }_{h}(t)+\boldsymbol{\psi }
	_{\omega }\times (\mathbf{x}_1-\mathbf{q}_{1}(t))\qquad \text{for }\mathbf{x}_1
	\in S_{1}(t).
	\end{equation*}
Here $\widetilde{\mathcal{G}}P_2$ is a bounded linear functional on $H_0^1(0,T;V_1(t))$ defined by \eqref{weak_pressure} corresponding to the pressure.
\end{proposition}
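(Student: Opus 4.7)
The plan is to derive \eqref{transformed_weak} by substituting a pulled-back test function into the weak formulation \eqref{weak3} for $(\bu_2,\bB_2)$ provided by Theorem~\ref{pressure}, and then performing the change of variables $\bx_2=\widetilde{\bX}_2(t,\bx_1)$ to rewrite every integral over $\Omega_F^2(t)$ as an integral over $\Omega_F^1(t)$. Concretely, for any $\bpsi\in H_0^1(0,T;V_1(t))$ I would set
\[
\bphi(t,\bx_2) := \nabla\widetilde{\bX}_2(t,\widetilde{\bX}_1(t,\bx_2))\,\bpsi(t,\widetilde{\bX}_1(t,\bx_2)),
\]
which is the Piola-type push-forward mirroring the definition of $\bU_2$ in \eqref{eq:Transformed}, with the roles of indices $1$ and $2$ swapped. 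Since $\widetilde{\bX}_1$ is a volume-preserving $W^{1,\infty}(0,T;C^\infty)$ diffeomorphism that is rigid near $S_2(t)$ and coincides with the identity near $\partial\Omega$, the function $\bphi$ is divergence-free, is rigid on $S_2(t)$, vanishes on $\partial\Omega$, and has the required $H^1$-in-time regularity; hence it is an admissible test function in \eqref{weak3}.

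The second step is term-by-term change of variables. Since the Jacobian of $\widetilde{\bX}_2$ equals one, each integral splits into a principal part expressed in the new variables plus chain-rule remainders. The time-derivative integral yields $\int\bU_2\cdot\partial_t\bpsi$ plus contributions coming from $\partial_t\widetilde{\bX}_i$, which are precisely the terms assembled into $\mathcal{M}\bU_2$ through \eqref{transformed_time}. The viscous integral yields $-2\int\D\bU_2:\D\bpsi$ plus commutator terms involving $\nabla\widetilde{\bX}_i$ and $\nabla^2\widetilde{\bX}_i$, which are assembled into $(\mathcal{L}-\Delta)\bU_2$ through \eqref{transformed_laplace}. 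The pressure term on the right-hand side of \eqref{weak3}, after the same change of variables, becomes the linear functional $\widetilde{\mathcal{G}}P_2$ defined in \eqref{weak_pressure}.

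The convective integral is the most delicate part, and is where both $(\bu_1\otimes\bU_2):\nabla\bpsi$ and the cross term $((\bU_2-\bu_1)\cdot\nabla)\bU_2\cdot\bpsi$ are produced. The strategy is first to transform $\int(\bu_2\otimes\bu_2):\nabla\bphi$ into $\int(\bU_2\otimes\bU_2):\nabla\bpsi$ modulo $\tilde{\mathcal{N}}\bU_2$ remainders generated via \eqref{transformed_convective}, and then to add and subtract $\bu_1$, integrating the difference by parts \emph{over the full domain} $\Omega$ (where $\divg(\bU_2-\bu_1)=0$ and boundary contributions on $\partial\Omega$ vanish). The discrepancy between $\int_\Omega$ and $\int_{\Omega_F^1(t)}$ is a volume integral over $S_1(t)$, on which both $\bU_2$ and $\bpsi$ are rigid motions; simplifying this integral via the cross-product identity together with the $\partial_t$-derivative of the rotational factor in $\bphi$ (which produces the skew-symmetric matrix $\widetilde{\bomega}\times\cdot$ with $\widetilde{\bomega}\times\bx=\Q^T\Q'\bx$) yields precisely the two $S_1(t)$ integrals on the right of \eqref{transformed_weak}.

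The main obstacle is the bookkeeping required to collect all chain-rule remainders into the three operators $\mathcal{L}$, $\mathcal{M}$, $\tilde{\mathcal{N}}$ in the exact form defined in the appendix, and to verify that the interior and surface contributions on $S_1(t)$ produced during the various integrations by parts combine to give the closed-form rigid-body terms stated in \eqref{transformed_weak}. A secondary difficulty is that the pressure $p_2$ is only a distribution in time through $p_2=\partial_t p_0^2+p_1^2$, so throughout the argument the time derivative must be kept on the test function, which is exactly what the functional $\widetilde{\mathcal{G}}P_2$ accommodates.
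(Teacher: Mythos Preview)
Your choice of pulled-back test function is not the one the paper uses, and this difference is not cosmetic: it breaks the argument in two places. You set
\[
\bphi(t,\bx_2)=\nabla\widetilde{\bX}_2(t,\widetilde{\bX}_1(t,\bx_2))\,\bpsi(t,\widetilde{\bX}_1(t,\bx_2))
=[\nabla\widetilde{\bX}_1(t,\bx_2)]^{-1}\bpsi(t,\widetilde{\bX}_1(t,\bx_2)),
\]
the Piola (contravariant) push-forward, and correctly note that this $\bphi$ is divergence-free. The paper instead takes the \emph{covariant} transform
\[
\bphi(t,\bx_2)=\nabla\widetilde{\bX}_1(t,\bx_2)^{T}\bpsi(t,\widetilde{\bX}_1(t,\bx_2)),
\]
and explicitly remarks that with this choice $\divg\bphi\neq 0$ in general. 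The covariant choice is the one compatible with the definitions: it gives $\bu_2\cdot\bphi=\bU_2\cdot\bpsi$ pointwise (so the time-derivative term transforms cleanly), and the operators $\mathcal{L},\mathcal{M},\tilde{\mathcal{N}}$ in \eqref{transformed_laplace}--\eqref{transformed_convective} are computed \emph{for this specific pairing}. With your Piola choice the pointwise identity fails, an extra metric factor $g=\nabla\widetilde{\bX}_2^{T}\nabla\widetilde{\bX}_2$ appears in every term, and the appendix formulas do not apply as stated. More fatally, your proposal is internally inconsistent: you assert $\divg\bphi=0$ and then say the pressure term in \eqref{weak3} ``becomes the linear functional $\widetilde{\mathcal{G}}P_2$''. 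But $\widetilde{\mathcal{G}}P_2$ is by definition \eqref{weak_pressure} the integral $-\int p_2\,\divg\bphi$, which vanishes identically if $\bphi$ is solenoidal. The pressure functional in the stated identity is nontrivial precisely because the paper's $\bphi$ is \emph{not} divergence-free.

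Your account of how $\bu_1$ enters is also off. You propose to obtain $(\bu_1\otimes\bU_2):\nabla\bpsi$ and $(\bU_2-\bu_1)\cdot\nabla\bU_2\cdot\bpsi$ by ``adding and subtracting $\bu_1$'' after transforming the convective term. In the paper these cross terms do not come from the convective term at all: they arise from the \emph{time-derivative} term, because the change of variables is time-dependent and its velocity satisfies $\partial_t\widetilde{\bX}_1=\bu_1-\bU_2$ on $\partial S_1(t)$ (with $\divg\partial_t\widetilde{\bX}_1=0$). The computation \eqref{weak_timeDerivative} shows how $\int\bu_2\cdot\partial_t\bphi$ produces $(\bu_1-\bU_2)\cdot\nabla\bU_2\cdot\bpsi$ and $(\bu_1-\bU_2)\otimes\bU_2:\nabla\bpsi^T$ directly, which then combine with the transformed convective term \eqref{weak_convective} to give \eqref{Wu2t6}. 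Similarly, the rigid-body integrals over $S_1(t)$ are obtained by transforming $\int_{S_2(t)}\bu_2\cdot\partial_t\bphi$ (where $\bphi=\Q\bpsi$ is rigid), not from a fluid/solid discrepancy in an integration by parts.
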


\begin{remark}
		In the definition of the operator $\widetilde{F}$, ie  $\mathcal{L}$, $\mathcal{M}$, $\tilde{\mathcal{N}}$ we use transformations $\widetilde{\bX}_1$ and $\widetilde{\bX}_2$ instead of $\bY$ and $\bX$, respectively.	
\end{remark}

\begin{proof}
	Let $\bpsi$ be a test function defined on the domain $(0,T)\times\Omega_F^1(t)$, i.e. $\bpsi\in H^1(0,T;V_1(t))$. Then, we can take the test function $\bphi$ in \eqref{weak3} to be defined in the following way:
	\begin{equation*}
	\bphi(t,\bx_2) = \nabla\widetilde{\bX}_1(t,\bx_2)^T\bpsi(t,\widetilde{\bX}_1(t,\bx_2)).
	\end{equation*}
	Since $\widetilde{\bX}_1\in W^{1,\infty}(0,T; C^{\infty}(\Omega))$, $\bphi$ belongs to $H^1(0,T;H^1_0(\Omega)$. Moreover, from the definition is immediate that $\D(\varphi)=0$ in $S_2(t)$ and therefore can be used as a test function in \eqref{weak3}. Here we note that in general $\divg \bphi\neq 0$ because we used $\nabla\widetilde{\bX}_1^T$  instead of $\nabla\widetilde{\bX}_1$ in definition of $\bphi$.
	
	We now compute the transformation of all terms in the weak formulation {\eqref{weak3}} by using properties of the local change of variables. Here we present the main steps, while details are given in Appendix, Section \ref{Sec:WFDEtails}.
	
	\noindent
	\textbf{The fluid time-derivative term}.
	\begin{align}
	&\int_{\Omega_F^2(\tau)}\bu_2\cdot\partial_t\bphi
	= \int_{\Omega_F^1(\tau)}\big (\bU_2\cdot \partial_t\bpsi
	-\mathcal{M}\bU_2\cdot\bpsi
	+\nabla(\bU_2\cdot\bpsi)\cdot\partial_t\widetilde{\bX}_1\big )
	\notag\\
	&\quad
	=\int_{\Omega_F^1(\tau)} \big (\bU_2\cdot \partial_t\bpsi
	-\mathcal{M}\bU_2\cdot\bpsi
	+(\bu_1-\bU_2)\cdot\nabla\bU_2\cdot\bpsi + (\bu_1-\bU_2)\otimes\bU_2:\nabla\bpsi^T\big ),
	\label{weak_timeDerivative}
	\end{align}
	where $\mathcal{M}$ is the operator defined in \eqref{OpN}. In the last equality we used
	$\partial_t\widetilde{\bX}_1 = \bu_1-\bU_2$ on $\partial S_1(t)$ and $\divg(\partial_t\widetilde{\bX}_1)=0$ (by construction of the transformation $\widetilde{\bX}_2$, see \cite{GGH13}).	
	
	\noindent
	\textbf{Convective term}.	
	\begin{equation}\label{weak_convective}
	\bu_2\otimes\bu_2:\nabla\bphi
	= \bU_2\otimes\bU_2:\nabla\bpsi^T
	- \widetilde{\mathcal{N}}\bU_2\cdot\bpsi,
	\end{equation}
	where $\widetilde{\mathcal{N}}$ is the operator defined in \eqref{OpC}. Combining \eqref{weak_timeDerivative} and \eqref{weak_convective} we get the following expression for the acceleration term:
	\begin{align} \label{Wu2t6}
	\int_{\Omega_F^2(\tau)}& \bu_2\cdot\partial_t\bphi
	+ \bu_2\otimes\bu_2:\nabla\bphi \, \d\bx_2
	\notag\\
	&= \int_{\Omega_F^1(\tau)} \bU_2\cdot \partial_t\bpsi - \mathcal{M}\bU_2\cdot\bpsi
	+ (\bu_1-\bU_2)\cdot\nabla\bU_2\cdot\bpsi
	\notag\\
	&\qquad\qquad
	+ (\bu_1-\bU_2)\otimes\bU_2:\nabla\bpsi^T
	+ \bU_2\otimes\bU_2:\nabla\bpsi^T - \widetilde{\mathcal{N}}\bU_2\cdot\bpsi
	\, \d\bx_1
	\notag\\
	&= \int_{\Omega_F^1(\tau)}\bU_2\cdot \partial_t\bpsi
	+ (\bu_1-\bU_2)\cdot\nabla\bU_2\cdot\bpsi
	+ \bu_1\otimes\bU_2:\nabla\bpsi^T\\
	&\qquad\qquad
	-(\mathcal{M}\bU_2+\widetilde{\mathcal{N}}\bU_2)\cdot\bpsi
	\, \d\bx_1.
	\notag
	\end{align}	
	
	\noindent
	\textbf{Pressure term}.
	For the pressure term we define
	\begin{equation}	\label{weak_pressure}
	\begin{split}
	\left\langle \widetilde{\mathcal{G}}P_2,\bpsi\right\rangle
	&:= -\int_{0}^{T}\int_{\Omega^2_F(\tau)} p_2\cdot\divg\bphi\,
	{
		= \int_{0}^{T}\int_{\Omega^2_F(\tau)} \left( p_0^2\cdot\divg\partial_t\bphi-p_1^2\cdot\divg\bphi \right)
	}
	\end{split}
	\end{equation}
	Since $\widetilde{\bX}_1, \widetilde{\bX}_2\in W^{1,\infty}(0,T;C^{\infty}(\overline{\Omega}))$, and
	$$
	\bphi \mapsto \int_{0}^{T}\int_{\Omega^2_F(\tau)} p_2\cdot\divg\bphi
	$$
	is a bounded linear functional on $H_0^1(0,T;H^1_0(\Omega))$, it is easy to see that $\widetilde{\mathcal{G}}P_2$ is a bounded linear functional on $H_0^1(0,T;V_1(t))$.
	
	\noindent
	\textbf{Diffusive term}.
	\begin{equation}\label{DualL}
	\int_{\Omega^2_F(\tau)} 2\,\D\bu_2:\D\bphi
	=\left\langle \mathcal{L}\bU_2, \bpsi\right\rangle,
	\end{equation}
	where
	\begin{multline} \label{operatorL}
	\left\langle \mathcal{L}\bU_2, \bpsi\right\rangle
	= \int_{\Omega_F(\tau)} \big(\sum_{ijk}
	(g^{jk}\partial_j\bU_{i}^2\partial_k\bpsi_i
	+ g^{jk}\partial_k\bU_i^2\partial_i\bpsi_j)
	- \sum_{ijkl} (g^{kl}+\partial_l\widetilde{\bX}_k^1)\Gamma_{li}^j\partial_k\bU_{i}^2\bpsi_j\\
	+ \sum_{ijkl} (g^{kl}\Gamma_{li}^j\bU_{i}^2\partial_k\bpsi_j
	+ g^{jl}\Gamma_{li}^k\bU_i^2\partial_k\bpsi_j)
	- \sum_{ijklm} (g^{kl}+\partial_k\widetilde{\bX}_l^1)\Gamma_{li}^m\Gamma_{km}^j\bU_{i}^2\bpsi_j
	\big).
	\end{multline}
	If we denote
	$$
	\left\langle \Delta\bU_2, \bpsi\right\rangle
	:= \int_{\Omega_F(\tau)}2\,\D\bU_2\cdot\D\bpsi,
	$$
	we get
	\begin{multline*}
	\left\langle (\mathcal{L}-\Delta)\bU_2, \bpsi\right\rangle
	= \int_{\Omega_F(\tau)} \big(\sum_{ijk}
	((g^{jk}-\delta_{jk})\partial_j\bU_{i}^2\partial_k\bpsi_i
	+ (g^{jk}-\delta_{jk})\partial_k\bU_i^2\partial_i\bpsi_j)
	- \sum_{ijkl} (g^{kl}+\partial_l\widetilde{\bX}_k^1)\Gamma_{li}^j\partial_k\bU_{i}^2\bpsi_j\\
	+ \sum_{ijkl} (g^{kl}\Gamma_{li}^j\bU_{i}^2\partial_k\bpsi_j
	+ g^{jl}\Gamma_{li}^k\bU_i^2\partial_k\bpsi_j)
	- \sum_{ijklm} (g^{kl}+\partial_k\widetilde{\bX}_l^1)\Gamma_{li}^m\Gamma_{km}^j\bU_{i}^2\bpsi_j
	\big).
	\end{multline*}
	
	\noindent
	\textbf{Rigid body terms}:
	In the solid domain $S_2(t)$ we have
	\begin{equation*}
	\bu_2(t,\bx_2) = \Q(t)\,\bU_2(t,\widetilde{\bX}_1(t,\bx_2)),
	\end{equation*}
	\begin{equation*}
	\bphi(t,\bx_2) = \Q(t)\bpsi(t,\widetilde{\bX}_1(t,\bx_2)),
	\end{equation*}
	which implies
	\begin{align}
	\bu_2\cdot\partial_t\bphi
	&= \Q\bU_2\cdot\frac{d}{dt}(\Q\bpsi)
	= \Q\bU_2\cdot(\Q'\bpsi + \Q\partial_t\bpsi + \Q\nabla\bpsi\partial_t\widetilde{\bX}_1)
	\notag\\
	&= \bU_2\cdot\Q^T\Q'\bpsi
	+ \bU_2\cdot\partial_t\bpsi
	+ \bU_2\cdot\nabla\bpsi\partial_t\widetilde{\bX}_1
	\notag\\
	&= \bU_2\cdot\widetilde{\bomega}\times\bpsi
	+ \bU_2\cdot\partial_t\bpsi
	+ \bU_2\cdot\nabla\bpsi\partial_t\widetilde{\bX}_1.
	\notag
	\end{align}
	Since $\partial_t\widetilde{\bX}_1=\bu_1-\bU_2$ and $\bpsi(t,\mathbf{x})$ is rigid, i.e.
	\begin{equation*}
	\bpsi(t,\mathbf{x_1})=\boldsymbol{\psi }_{h}(t)+\boldsymbol{\psi }
	_{\omega }(t)\times (\bx_1-\mathbf{q}_{1}(t))
	\quad\Rightarrow\quad
	\nabla\bpsi\,\bx_1 = \bpsi_{\omega}\times\bx_1,
	\end{equation*}
	it follows
	\begin{align}
	\bu_2\cdot\partial_t\bphi
	&= \bU_2\cdot\widetilde{\bomega}\times\bpsi
	+ \bU_2\cdot\partial_t\bpsi
	+ \bU_2\cdot(\bpsi_{\omega}\times(\bu_1-\bU_2))
	\notag\\
	&= -\widetilde{\bomega}\times\bU_2\cdot\bpsi
	+ \bU_2\cdot\partial_t\bpsi
	+ \bu_1\times\bU_2\cdot\bpsi_{\omega}
	- \underbrace{\bU_2\times\bU_2\cdot\bpsi_{\omega}}_{=0}.
	\end{align}

\end{proof}

As in \cite{chemetov2017weak}, the following two lemmas give us
estimates for the additional terms in \eqref{transformed_weak}.

\begin{lemma}	\label{estimateOmega}
	For the vector $\widetilde{\boldsymbol{\omega }}$, as defined in Proposition \ref{WeakU2},
	the following equality holds:
	\begin{equation*}
	\widetilde{\boldsymbol{\omega }}(t)
	= \bOmega_{2}(t) - \bomega_{1}(t),
	\qquad \forall t \in [0,T_0].
	\end{equation*}
\end{lemma}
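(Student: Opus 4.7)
The plan is a direct calculation from the definition $\widetilde{\bomega}\times\bx = \Q^T \Q'\bx$ where $\Q = \Q_2\Q_1^T$. First I would expand $\Q^T \Q'$ using the product rule: writing $\Q^T \Q' = \Q_1\Q_2^T(\Q_2\Q_1^T)' = \Q_1\Q_2^T \Q_2' \Q_1^T + \Q_1\Q_2^T \Q_2 (\Q_1^T)'$. The second summand simplifies via $\Q_2^T\Q_2 = \I$ to $\Q_1(\Q_1^T)'$, and differentiating $\Q_1\Q_1^T=\I$ gives $\Q_1(\Q_1^T)' = -\Q_1'\Q_1^T = -\P_1$.

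Next I would rewrite the first summand in terms of $\P_2$. Since $\Q_2'\Q_2^T = \P_2$ implies $\Q_2' = \P_2\Q_2$, one obtains $\Q_2^T \Q_2' = \Q_2^T\P_2\Q_2$, whence
\begin{equation*}
\Q_1\Q_2^T\Q_2'\Q_1^T = \Q_1\Q_2^T\P_2\Q_2\Q_1^T = \Q^T \P_2 \Q.
\end{equation*}
Combining the two computations yields $\Q^T\Q' = \Q^T\P_2\Q - \P_1$.

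Finally, I would use the conjugation identity for the cross product under rotations: for any $\bR\in SO(3)$ and $\ba,\bb\in\R^3$ one has $\bR(\ba\times\bb) = (\bR\ba)\times(\bR\bb)$, hence $\bR^T(\ba\times\bR\bx) = (\bR^T\ba)\times \bx$. Applying this with $\bR=\Q$ and $\ba=\bomega_2$ gives
\begin{equation*}
\Q^T\P_2\Q\,\bx = \Q^T(\bomega_2\times\Q\bx) = (\Q^T\bomega_2)\times \bx = \bOmega_2\times\bx,
\end{equation*}
using the definition $\bOmega_2 = \Q^T\bomega_2$ from \eqref{eq:Transformed}. Together with $\P_1\bx = \bomega_1\times\bx$, this produces
\begin{equation*}
\widetilde{\bomega}\times\bx = (\bOmega_2 - \bomega_1)\times \bx \qquad \forall\bx\in\R^3,
\end{equation*}
which forces $\widetilde{\bomega} = \bOmega_2 - \bomega_1$. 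There is no genuine obstacle here: the whole argument is algebraic manipulation of $SO(3)$-valued curves, and the only point requiring care is the sign bookkeeping when differentiating $\Q_1\Q_1^T=\I$ and the correct application of the rotation-equivariance of the cross product.
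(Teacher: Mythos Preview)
Your computation is correct and is exactly the natural direct verification from the definitions $\Q=\Q_2\Q_1^T$, $\Q_i'\Q_i^T=\P_i$, and $\bOmega_2=\Q^T\bomega_2$. The paper does not actually write out a proof of this lemma (it refers to \cite{chemetov2017weak}), so your argument fills in precisely the algebra that is being cited.
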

\begin{lemma}	\label{estimateF}
	The following estimate holds:
	\begin{equation*}
	\Vert \widetilde{F}\Vert
	_{L^{2}(0,T;(H^{1}(\Omega _{F}^{1}(t))'))}
	\leq C\left( ||\mathbf{a}_{1}-\mathbf{A}_{2}||_{L^{2}(0,T)}+||
	\boldsymbol{\omega }_{1}-\boldsymbol{\Omega }_{2}||_{L^{2}(0,T)}\right),
	\end{equation*}
	where $C$ depends only on
	$\Vert \mathbf{U}_{2}\Vert_{L^{2}(0,T;H^{1}(\Omega _{F}^{1}(t)))}$
	and
	$\Vert \mathbf{U}_{2}\Vert_{L^{\infty }(0,T;L^{2}(\Omega _{F}(t)))}$.
\end{lemma}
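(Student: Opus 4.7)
The operator $\widetilde F = (\mathcal L-\Delta)\bU_2 + \mathcal M\bU_2 + \tilde{\mathcal N}\bU_2$ captures the ``extra'' contributions that appear after the local change of variables $\widetilde{\bX}_1,\widetilde{\bX}_2$, beyond the Navier--Stokes structure that survives on $\Omega_F^1(t)$. The crucial structural fact is that each of these three operators vanishes identically when $\widetilde{\bX}_1\equiv\mathrm{id}$: in that case $g^{jk}=\delta^{jk}$, the Christoffel symbols $\Gamma_{li}^j$ vanish, and $\partial_t\widetilde{\bX}_1 = 0$. The plan is thus first to quantify how far $\widetilde{\bX}_1$ and $\partial_t\widetilde{\bX}_1$ can be from their trivial values in terms of the velocity differences $\mathbf a_1-\mathbf A_2$ and $\bomega_1-\bOmega_2$, and then to insert these bounds into the explicit formulas for $\mathcal L-\Delta$, $\mathcal M$ and $\tilde{\mathcal N}$ and take $L^2$ in time.

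For the first step, by the Inoue--Wakimoto-type construction recalled in Appendix \ref{Sec:LT} (cf.\ \cite{GGH13,inoue1977existence}), $\partial_t\widetilde{\bX}_1(t,\cdot)$ is a smooth, divergence-free, compactly supported extension to $\Omega$ of the field $\bu_1-\bU_2$. On a neighbourhood of $S_1(t)$ this boundary field is exactly the rigid motion $(\mathbf a_1-\mathbf A_2) + (\bomega_1-\bOmega_2)\times(\bx-\bq_1)$ (using Lemma \ref{estimateOmega} at the angular level and the analogous identification at the translational level). Linearity of the extension in its boundary data and smoothness of the cutoff, whose support is uniformly separated from $\partial\Omega$ thanks to $\mathrm{dist}(S_1(t),\partial\Omega)\ge\delta_1$, yield for every $k$
\[
\|\partial_t\widetilde{\bX}_1(t,\cdot)\|_{W^{k,\infty}(\Omega)} \le C_k\,\phi(t), \qquad \phi(t):=|\mathbf a_1(t)-\mathbf A_2(t)| + |\bomega_1(t)-\bOmega_2(t)|.
\]
Since $\widetilde{\bX}_1(0,\cdot)=\mathrm{id}$, integrating in time and applying Cauchy--Schwarz gives the static bound
\[
\|\widetilde{\bX}_1(t,\cdot)-\mathrm{id}\|_{W^{k,\infty}(\Omega)} \le C_k\sqrt T\,\|\phi\|_{L^2(0,T)},
\]
and hence the same uniform-in-$t$ smallness for the coefficients $g^{jk}-\delta^{jk}$ and $\Gamma_{li}^j$, which are smooth functions of $\nabla\widetilde{\bX}_1$ vanishing at $\nabla\widetilde{\bX}_1=\I$.

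For the second step I treat the three pieces of $\widetilde F$ separately, classifying each coefficient as either \emph{static} (of type $\widetilde{\bX}_1-\mathrm{id}$, uniformly $L^\infty$-small) or \emph{dynamic} (of type $\partial_t\widetilde{\bX}_1$, pointwise-in-$t$ small). The diffusive piece $(\mathcal L-\Delta)\bU_2$ is, by \eqref{operatorL}, linear in the static quantities, so Cauchy--Schwarz gives $\|(\mathcal L-\Delta)\bU_2(t)\|_{(H^1)'} \le C\,\|\widetilde{\bX}_1(t)-\mathrm{id}\|_{W^{2,\infty}}\|\bU_2(t)\|_{H^1}$, whose $L^2(0,T)$ norm is bounded by $C\,\|\phi\|_{L^2}\,\|\bU_2\|_{L^2(0,T;H^1)}$. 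For $\mathcal M\bU_2$, the coefficients involve $\partial_t\widetilde{\bX}_1$; using $\divg\partial_t\widetilde{\bX}_1=0$ to shift one spatial derivative onto the test function yields $\|\mathcal M\bU_2(t)\|_{(H^1)'} \le C\,\phi(t)\,\|\bU_2(t)\|_{L^2}$, whose $L^2(0,T)$ norm is bounded by $C\,\|\phi\|_{L^2}\,\|\bU_2\|_{L^\infty(0,T;L^2)}$. The nonlinear correction $\tilde{\mathcal N}\bU_2$, quadratic in $\bU_2$ with static coefficients, is handled by the Sobolev embedding $H^1\hookrightarrow L^6$ combined with a Ladyzhenskaya-type interpolation, the quadratic factor in $\bU_2$ being absorbed (after squaring and integrating in $t$) into the constant depending on $\|\bU_2\|_{L^\infty(L^2)}$ and $\|\bU_2\|_{L^2(H^1)}$.

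The main technical obstacle is the bookkeeping: for each individual term one must identify whether the accompanying coefficient is of static or dynamic type, choose the right Sobolev pairing of $\bU_2$ and $\bpsi$ so as not to exceed the available regularity of $\bU_2$, and carry out the integrations by parts justified by $\bpsi\in V_1(t)$ and $\divg\partial_t\widetilde{\bX}_1=0$. Once this classification is done the three contributions combine into the claimed linear estimate in $\|\mathbf a_1-\mathbf A_2\|_{L^2}+\|\bomega_1-\bOmega_2\|_{L^2}$ with constant depending only on the two announced norms of $\bU_2$. The calculation runs in parallel with the corresponding step in \cite{chemetov2017weak}; the key new ingredient is the $\sqrt T$ argument to pass from the dynamic bound on $\partial_t\widetilde{\bX}_1$ to the static bound on $\widetilde{\bX}_1-\mathrm{id}$, which is what permits the weakening from $H^1(0,T;L^2)\cap L^2(0,T;H^2)$ down to the regularity actually available for the Prodi--Serrin solution.
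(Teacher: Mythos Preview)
Your proposal is correct and follows essentially the paper's own argument: establish the transformation bounds \eqref{TrEst1} and the resulting smallness of $g^{ij}-\delta_{ij}$ and $\Gamma^k_{ij}$, then estimate each piece of $\widetilde F$ in dual form---the paper carries out $(\mathcal L-\Delta)\bU_2$ explicitly via \eqref{operatorL} and defers $\mathcal M\bU_2$ and $\tilde{\mathcal N}\bU_2$ to \cite{chemetov2017weak}, whereas you spell out all three. One minor inaccuracy in your last paragraph: the genuinely new ingredient relative to \cite{chemetov2017weak} is not the $\sqrt T$ passage from the dynamic to the static bound (that argument is already there), but rather the weak/dual treatment of $\mathcal L-\Delta$ as the bilinear form \eqref{operatorL}, which is precisely what removes the need for $\bU_2\in L^2(0,T;H^2)$ and lets the estimate close with only $\bU_2\in L^2(0,T;H^1)\cap L^\infty(0,T;L^2)$---the paper flags this explicitly as ``different from results in \cite{chemetov2017weak}''.
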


\begin{proof}
	
	As in \cite{chemetov2017weak}, we get that
	\begin{equation}
	\left.
	\begin{array}{l}
	\Vert \widetilde{\mathbf{X}}_{2}(t,.)-\mathrm{id}{\Vert _{W^{3,\infty
			}(\Omega _{F}^{1}(t))}}\leq C(\Vert \mathbf{a}_{1}-\mathbf{A}_{2}\Vert
	_{L^{2}(0,T)}+\Vert \boldsymbol{\omega }_{1}-\boldsymbol{\Omega }_{2}\Vert _{L^{2}(0,T)}), \\
	\Vert \partial _{t}\widetilde{\mathbf{X}}_{2}(t,.)\Vert _{W^{1,\infty
		}(\Omega _{F}^{1}(t))}\leq C(|\mathbf{a}_{1}(t)-\mathbf{A}_{2}(t)|+|
	\boldsymbol{\omega }_{1}(t)-\boldsymbol{\Omega }_{2}(t)|),
	\end{array}
	\right\} \,\, t\in \lbrack 0,T].  \label{TrEst1}
	\end{equation}
	and 
	\begin{eqnarray*}
		\Vert g_{ij}(t)-\delta _{ij}\Vert _{W^{1,\infty }(\Omega _{F}(t))}
		&+&\Vert
		g^{ij}(t)-\delta _{ij}\Vert _{W^{1,\infty }(\Omega _{F}(t))}
		+ \Vert \Gamma
		_{ij}^{k}(t)\Vert _{L^{\infty }(\Omega _{F}(t))} \\
		&\leq &C(\Vert\mathbf{a}_{1}-\mathbf{A}_{2}\Vert_{L^{2}(0,T)}+\Vert\bomega_{1}-\bOmega_{2}\Vert_{L^{2}(0,T)}),\qquad t\in \lbrack
		0,T].
	\end{eqnarray*}	
	Now, for $\bpsi\in H^1(\Omega_{F}(\tau))$ we obtain the following estimates:
	\begin{align*}
	\vert\left\langle (\mathcal{L}-\Delta)\bU_2, \bpsi\right\rangle\vert
	&= \Big\vert
	\int_{\Omega_F(\tau)} \sum_{ijk}
	(g^{jk}-\delta_{jk})\partial_j\bU_{i}^2\partial_k\bpsi_i
	+ \int_{\Omega_F(\tau)} \sum_{ijk}
	(g^{jk}-\delta_{jk})\partial_k\bU_i^2\partial_i\bpsi_j
	\\
	&\qquad- \sum_{ijkl} (g^{kl}+\partial_l\widetilde{\bX}_k^1)\Gamma_{li}^j\partial_k\bU_{i}^2\bpsi_j
	+ \sum_{ijkl} (g^{kl}\Gamma_{li}^j\bU_{i}^2\partial_k\bpsi_j
	+ g^{jl}\Gamma_{li}^k\bU_i^2\partial_k\bpsi_j)
	\\
	&\qquad- \sum_{ijklm} (g^{kl}+\partial_k\widetilde{\bX}_l)\Gamma_{li}^m\Gamma_{km}^j\bU_{i}^2\bpsi_j
	\Big\vert
	\\
	&\leq C\big(
	\Vert
	g^{ij}(t)-\delta_{ij}\Vert_{L^{\infty }(\Omega _{F}(\tau))}
	+
	\Vert \Gamma
	_{ij}^{k}(t)\Vert_{L^{\infty }(\Omega_{F}(\tau))}
	\big)
	\Vert
	\bU_2
	\Vert_{H^{1}(\Omega_{F}(\tau))}
	\Vert
	\bpsi
	\Vert_{H^{1}(\Omega_{F}(\tau))}
	\\
	&\leq C(\Vert\ba_{1}-\bA_{2}\Vert_{L^{2}(0,T)}
	+\Vert\bomega_{1}-\bOmega_{2}\Vert_{L^{2}(0,T)})
	\Vert
	\bU_2
	\Vert_{H^{1}(\Omega_{F}(\tau))}
	\Vert
	\bpsi
	\Vert_{H^{1}(\Omega_{F}(\tau))},
	\end{align*}
	which are different from results in \cite{chemetov2017weak}.
	The other terms are the same as in \cite{chemetov2017weak}.	
\end{proof}

\subsection{Existence of an associated pressure - proof of Theorem \ref{pressure}}\label{Sec:Pressure}

Since the pressure is defined only on the fluid domain, we decompose the test space in two parts which correspond to the fluid part and the rigid body part. More precisely, we introduce the following decomposition of the space $V(t)$:

\begin{lemma} \label{V_decomposition}
	Let $V(t)$ be the function space defined by \eqref{FluidFS}. Then
	\begin{equation*}
	V(t)= \widetilde{H}_{0,\sigma}^1(S(t)) + W(S(t)),
	\end{equation*}
	where
	$$
	\widetilde{H}_{0,\sigma}^1(S(t)) = \{\, \bphi_0\in H^1_0(\Omega) \,:\, \divg\bphi_0=0 \text{ in } \Omega,\ \bphi_{0}= 0 \text{ in } S(t)\,\},
	$$
	$$
	W(S(t)) = \{\, \bphi\in H^1_0(\Omega) \,:\,\bphi = \mathrm{Ext}(\ba+\bomega\times(\bx-\bq)) \text{ in } S(t), \text{ for some } \ba,\bomega\in\R^3 \,\},
	$$
	the extension operator $\mathrm{Ext}$ is described in Appendix \ref{Sec:LT}.
	
\end{lemma}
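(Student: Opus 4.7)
The strategy is the natural one: given $\bv\in V(t)$, peel off its rigid motion on $S(t)$ using the extension operator $\mathrm{Ext}$ from Appendix \ref{Sec:LT}, and show that the remainder lies in $\widetilde{H}^1_{0,\sigma}(S(t))$. First I would verify the two inclusions $\widetilde{H}^1_{0,\sigma}(S(t))\subset V(t)$ and $W(S(t))\subset V(t)$. The first is immediate from the definitions since $\bphi_0=0$ on $S(t)$ trivially implies $\D\bphi_0=0$ there. For the second inclusion the only nontrivial point is divergence-freeness in the fluid part $\Omega_F(t)$; this is a built-in property of $\mathrm{Ext}$, which by construction (see Appendix \ref{Sec:LT}) extends a rigid velocity on $S(t)$ to a divergence-free element of $H^1_0(\Omega)$.

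For the converse (the actual decomposition), let $\bv\in V(t)$. The condition $\D\bv=0$ on the connected set $S(t)$ together with the remark following \eqref{FluidFS} gives the existence of vectors $\ba,\bomega\in\R^3$ (uniquely determined by $\bv$) such that
\begin{equation*}
\bv(\bx)=\ba+\bomega\times(\bx-\bq(t)),\qquad \bx\in S(t).
\end{equation*}
I then define
\begin{equation*}
\bphi:=\mathrm{Ext}\bigl(\ba+\bomega\times(\bx-\bq(t))\bigr)\in W(S(t)),\qquad \bphi_0:=\bv-\bphi.
\end{equation*}
Since $\bv$ and $\bphi$ agree on $S(t)$, one has $\bphi_0\equiv 0$ on $S(t)$; since both $\bv$ and $\bphi$ are in $H^1_0(\Omega)$ and divergence-free on $\Omega$, so is $\bphi_0$. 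Hence $\bphi_0\in\widetilde{H}^1_{0,\sigma}(S(t))$ and $\bv=\bphi_0+\bphi$ is the required decomposition.

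The only delicate point is the divergence-free property of $\mathrm{Ext}$, which I would simply invoke from the appendix rather than re-derive: the rigid velocity on $\partial S(t)$ is automatically tangential to the boundary flux condition needed by a Bogovski-type extension, so a divergence-free $H^1_0(\Omega)$ extension exists. Uniqueness of $(\ba,\bomega)$ is not needed for the statement (the sum is not claimed to be direct), so no further work is required; however, it would follow from the fact that a rigid velocity field vanishing on an open subset of $S(t)$ is identically zero, which could be noted in passing.
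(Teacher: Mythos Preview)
Your proof is correct and follows essentially the same approach as the paper: identify the rigid part of $\bv$ on $S(t)$, subtract the divergence-free extension $\mathrm{Ext}(\ba+\bomega\times(\bx-\bq))$, and observe that the remainder vanishes on $S(t)$. The paper's own proof is slightly terser (it does not separately verify the inclusions $\widetilde{H}^1_{0,\sigma}(S(t))\subset V(t)$ and $W(S(t))\subset V(t)$, nor comment on uniqueness), but the substance is identical.
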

\begin{proof}
	Let $\bphi\in V(t)$. There exist $\ba,\bomega\in\R^3$
	such that
	$\bphi = \ba+\bomega\times(\bx-\bq(t))$ in $S(t)$.
	Let $\bphi_{\ba,\bomega}\in H^1_0(\Omega)$ be an extension of $\bphi$ such that $\divg \bphi_{\ba,\bomega} = 0$. The construction of such a function can be found in \cite{GGH13}, Section 3.
	It follows that
	$\bphi_{\ba,\bomega}\in  W(S(t))$ and
	$\bphi_{0} := \bphi - \bphi_{\ba,\bomega} \in \widetilde{H}_{0,\sigma}^1(S(t))$ .
\end{proof}

Now we see that the equation \eqref{weak} for $\bu_2$ is equivalent to the following couple of equations:
\begin{multline}	\label{test_a_omega}
\int_{0}^{T}\int_{\Omega}
\bu_2 \cdot \partial_{t}\bphi_{\ba,\bomega}\, \d\bx_2\,\d t
+ \int_{0}^{T}\int_{\Omega_F^2(t)}\big(
(\bu_2\otimes \bu_2) :\nabla\bphi_{\ba,\bomega}
- \,2\,\D\bu_2:\D\bphi_{\ba,\bomega}\,\big)\, \d\bx_2\,\d t
= 0
\end{multline}
for any test function $\bphi_{\ba,\bomega}\in H_0^1(0,T;W(S(t)))$, and
\begin{multline} \label{weak_t}
\int_{0}^{T}\int_{\Omega_F^2(t)}
\bu_2 \cdot \partial_{t}\bphi_{0}\, \d\bx_2\,\d t
+ \int_{0}^{T}\int_{\Omega_F^2(t)}\big(
(\bu_2\otimes \bu_2) :\nabla\bphi_{0}
- \,2\,\D\bu_2:\D\bphi_{0}\,\big)\, \d\bx_2\,\d t
= 0,
\end{multline}
for any test function $\bphi_{0}\in H_0^1(0,T;H_{0,\sigma}^1(S(t)))$.

We derive the pressure equation from \eqref{weak_t}. The construction of the pressure will be divided into the following steps:
\begin{enumerate}
	\item Transform the equation to the fixed domain $\Omega_{F}=\Omega_{F}^2(0)$.
	\item Construct the pressure on the fixed domain.
	\item Transform the equation back to the domain $\Omega_{F}^2(t)$.
\end{enumerate}

\noindent
\textbf{Step 1}.
Let $\bpsi\in H^1(0,T;V(t))$. We define $\bU$ and $\bphi$ such that
\begin{displaymath}
\bu_2(t,\bx_2) = \nabla\bY_2^T(t,\bx_2)\,\bU(t,\bY_2(t,\bx_2)),
\end{displaymath}
\begin{displaymath}
\bphi(t,\bx_2)
= \nabla\bX_2(t,\bY_2(t,\bx_2))\,\bpsi(t,\bY_2(t,\bx_2)),
\end{displaymath}
where $\bX_2$ and $\bY_2$ are the coordinate transformations defined in Section \ref{Sec:TransformedWeak}. Note that the transformation of the velocity does not preserve divergence, but the transformation of the test function does.

By similar calculation as in Section \ref{Sec:TransformedWeak}, we get
\begin{equation*}
\int_{\Omega^2_F(t)}\bu_2\cdot\partial_{t}\bphi\ \d\bx_2 =
\int_{\Omega_F}\big(
\bU\cdot\partial_t\bpsi + \nabla(\bU\cdot\bpsi)\cdot\partial_{t}\bY_2
\big)\ \d\by
-\left\langle \widehat{\mathcal{M}}\bU,\bpsi \right\rangle,
\end{equation*}
\begin{equation*}
\int_{\Omega^2_F(t)} \bu_2\otimes\bu_2:\nabla\bphi \ \d\bx_2
= \int_{\Omega_F}
\nabla\bY_2\nabla\bY_2^T(\bU\otimes\bU):\nabla\bpsi^T
\ \d\by
+ \left\langle \widehat{\mathcal{N}}\bU,\bpsi \right\rangle,
\end{equation*}
\begin{equation*}
\int_{\Omega^2_F(t)} 2\,\D\bu_2:\D\bphi \ \d\bx_2
= \left\langle \widehat{\mathcal{L}}\bU,\bpsi \right\rangle,
\end{equation*}
where $\widehat{\mathcal{M}},\widehat{\mathcal{N}}$ and $\widehat{\mathcal{L}}$ are the functionals defined by

\begin{displaymath}
\left\langle \widehat{\mathcal{M}}\bU,\bpsi \right\rangle
:= \int_{\Omega_F}\big(\nabla\bU\partial_{t}\bY_2 + \nabla\bX_2^T\partial_{t}\nabla\bY_2^T\bU \big)\cdot\bpsi\ \d\by,
\end{displaymath}
\begin{displaymath}
\left\langle \widehat{\mathcal{N}}\bU,\bpsi \right\rangle
:= \int_{\Omega_F }
\Big(
\sum_{ijkl}\Gamma_{il}^{j}g^{kl}\bU_{j}\bU_{k}\bpsi_i
\Big)\ \d\by,
\end{displaymath}
\begin{multline*}
\left\langle \widehat{\mathcal{L}}\bU,\bpsi \right\rangle
:= 
\int_{\Omega_F(\tau)} \Big(\sum_{ijk}
(g^{jk}\partial_j\bU_{i}\partial_k\bpsi_i
+ g^{jk}\partial_k\bU_i\partial_i\bpsi_j)
- \sum_{ijkl} (g^{kl}+\partial_l\bY_k)\Gamma_{li}^j\partial_k\bU_{i}\bpsi_j\\
+ \sum_{ijkl} (g^{kl}\Gamma_{li}^j\bU_{i}\partial_k\bpsi_j
+ g^{jl}\Gamma_{li}^k\bU_i\partial_k\bpsi_j)
- \sum_{ijklm} (g^{kl}+\partial_k\bY_l)\Gamma_{li}^m\Gamma_{km}^j\bU_{i}\bpsi_j
\Big)\d\by.
\end{multline*}
where $g^{ij}$ and $\Gamma_{il}^{j}$ 
are coefficients defined by \eqref{covariant} and \eqref{Christoffel}, respectively, which depend on transformations $\bX_2,\bY_2$.
The difference with Section \ref{Sec:TransformedWeak} is that we use a transformation to the fixed domain, so we use different labels to emphasize the difference between the transformed operators $\widehat{\mathcal{M}},\widehat{\mathcal{N}},\widehat{\mathcal{L}}$ and $\mathcal{M},{\mathcal{N}},{\mathcal{L}}$.

Since $\partial_t\bY_2 = -\bU$ in $\partial S_0$, it follows
\begin{align*}
\int_{\Omega_F^2(\tau)} & \bu_2\cdot\partial_t\bphi
+ \bu_2\otimes\bu_2:\nabla\bphi
- 2\,\D\bu_2:\D\bphi \, \d\bx_2
\notag\\
&= \int_{\Omega_F} \big (
\bU\cdot\partial_t\bpsi + \nabla(\bU\cdot\bpsi)\cdot\partial_{t}\bY_2
+\nabla\bY_2\nabla\bY_2^T(\bU\otimes\bU):\nabla\bpsi^T
\big )\, \d\by
\notag\\
&\qquad
- \left\langle
\widehat{\mathcal{L}}\bU
+\widehat{\mathcal{M}}\bU
- \widehat{\mathcal{N}}\bU,\bpsi \right\rangle
\notag\\
&= \int_{\Omega_F} \big (
\bU\cdot\partial_t\bpsi
-\bU\cdot\nabla\bU\cdot\bpsi
- (\I-\nabla\bY_2\nabla\bY_2^T)\bU\otimes\bU:\nabla\bpsi^T
\big )\, \d\by
\notag\\
&\qquad
- \left\langle
\widehat{\mathcal{L}}\bU
+\widehat{\mathcal{M}}\bU
- \widehat{\mathcal{N}}\bU,\bpsi \right\rangle.
\notag
\end{align*}
Now we define
\begin{equation*}
\left\langle \widehat{F}(t), \bpsi\right\rangle =
\left\langle
(\widehat{\mathcal{L}}-\Delta)\bU
+\widehat{\mathcal{M}}\bU
- \widehat{\mathcal{N}}\bU,\bpsi \right\rangle,
\quad \bpsi\in H_0^1(\Omega_{F}),
\end{equation*}
where
\begin{align*}
\left\langle \Delta\bU, \bpsi\right\rangle
&= \int_{\Omega_F} 2\D\bU:\D\bpsi
= \int_{\Omega_F} \sum_{ijk} \delta_{jk}\partial_j\bU_{i}\partial_k\bpsi_i 
+ \int_{\Omega_F} \sum_{ijk} \delta_{jk}\partial_k\bU_{i}\partial_i\bpsi_j.
\end{align*}
Finally, we get the equation
\begin{multline} \label{weak_fixedDomain}
\int_{0}^{T}\int_{\Omega_F}
\bU\cdot \partial_t\bpsi \, \d\by\,\d t
-\int_{0}^{T}\int_{\Omega_F}2\,\D\bU:\D\bpsi\, \d\by\, \d t\\
-\int_{0}^{T}\int_{\Omega_F}\Big(
\bU\cdot\nabla\bU\cdot\bpsi
+(\I-\nabla\bY_2\nabla\bY_2^T)\bU\otimes\bU:\nabla\bpsi^T \Big)\, \d\by\, \d t
= \int_{0}^{T} \langle \widehat{F}(t),\bpsi(t)\rangle\, \d t,
\end{multline}
for all $\bpsi\in H_0^1(0,T;V(0))$.

\noindent
\textbf{Step 2}.
On a fixed domain we can use the existing results on the existence of pressure. We use the construction from \cite{Neu}  Section 7.3.2.C or more generally Theorem 3 in Section 7.3.4.

By using the test function $\vartheta(t)\bpsi(\by)$ in the equation \eqref{weak_fixedDomain} we get
\begin{equation*}
\int_{0}^{T}\big(\left\langle -\bU,\bpsi\right\rangle_{\Omega_F}\vartheta'
+ \left\langle H,\bpsi\right\rangle_{\Omega_F}\vartheta \big)\ \d t
= 0
\quad \forall \bpsi\in C_{0,\sigma}^{\infty}(\Omega),
\forall \vartheta\in C_{0}^{\infty}(0,T),
\end{equation*}
where
\begin{multline*}
\left\langle H,\bpsi\right\rangle_{\Omega_F}
:= \langle \widehat{F}(t),\bpsi\rangle
+\int_{\Omega_F}\Big(
\bU\cdot\nabla\bU\cdot\bpsi
+(\I-\nabla\bY_2\nabla\bY_2^T)\bU\otimes\bU:\nabla\bpsi^T
+ 2\,\D\bU:\D\bpsi\,\Big)\, \d\by.
\end{multline*}
It is easy to show that $H\in L^1(0,T;W_0^{-1,2}(\Omega_F))$.
Therefore, Theorem 3 from \cite{Neu} implies that there exist functions $ p_0 $ and $ p_1 $ such that
\begin{equation*}
\int_{0}^{T}\big(\left\langle -\bU,\partial_t\bpsi\right\rangle_{\Omega_{F}}
+ \left\langle H,\bpsi\right\rangle_{\Omega_{F}} \big)\ \d t
= \int_{0}^{T}\int_{\Omega_F } \big(-p_0\divg\partial_{t}\bpsi
+ p_1\divg\bpsi)
\ \d\by\ \d t
\end{equation*}
for all  $\bpsi\in C_{0}^{\infty}((0,T)\times\Omega_F)$, i.e.
\begin{align*}
\int_{0}^{T}\int_{\Omega_F}
\bU\cdot \partial_t\bpsi \, \d\by\,\d t
-\int_{0}^{T}\int_{\Omega_F}&
\Big(
\bU\cdot\nabla\bU\cdot\bpsi
+(\I-\nabla\bY_2\nabla\bY_2^T)\bU\otimes\bU:\nabla\bpsi^T\, \Big)\, \d\by\, \d t
\\
-\int_{0}^{T}\int_{\Omega_F}2\D\bU:\D\bpsi\, \d\by\, \d t
&= \int_{0}^{T} \langle \widehat{F}(t),\bpsi(t)\rangle\, \d t
+\int_{0}^{T}\int_{\Omega_F } \big(p_0\divg\partial_{t}\bpsi
- p_1\divg\bpsi)
\ \d\by\ \d t,
\end{align*}
for all $\bpsi\in C_{0}^{\infty}((0,T)\times\Omega_F)$.
More precisely, if $\Omega_{F}$ is a Lipschitz domain, then $p_0\in L^{\infty}(0,T;L^2(\Omega_{F}))$ and $p_1\in L^{\frac{4}{3}}(0,T;L^2(\Omega_{F}))$ (see \cite{Neu}  Section 7.3.2.C and 7.3.2.E). Therefore, by density, the equation above is true for all $\bpsi\in H_{0}^{1}(0,T; H_{0}^{1}(\Omega_F))$.

\noindent
\textbf{Step 3:}
Finally, we transform the equation back to the domain $\Omega_{F}^2(t)$ by using the same transformations as in Step 1:
\begin{displaymath}
\bU(t,\by) = \nabla\bX_2(t,\by)^T\,\bu_2(t,\bX_2(t,\by)),
\end{displaymath}
\begin{displaymath}
\bpsi(t,\by)
= \nabla\bY_2(t,\bX_2(t,\by))\,\bphi(t,\bX_2(t,\by)),
\end{displaymath}
\begin{displaymath}
p_{i}^{2}(t,\bx_2)
= p_{i}(t,\bY_2(t,\bx_2)),
\quad i=0,1.
\end{displaymath}
Since our transformation preserves the divergence (see \cite{inoue1977existence}), i.e.
\begin{displaymath}
\divg_{\by}\bpsi = \divg_{\bx_2}\bphi,
\end{displaymath}
it follows
\begin{displaymath}
\int_{0}^{T}\int_{\Omega_F } \big(p_0\divg\partial_{t}\bpsi
- p_1\divg\bpsi)
\ \d\by\ \d t
= \int_{0}^{T}\int_{\Omega_F^2(t) } \big(p_0^2\divg\partial_{t}\bphi
- p_1^2\divg\bphi)
\ \d\bx_2\ \d t.
\end{displaymath}
For the remaining terms, all calculations from the first step are the same, so we get
\begin{multline}\label{weak_pressure1}
\int_{0}^{T}\int_{\Omega_F^2(t)}
\bu_2 \cdot \partial_{t}\bphi\, \d\bx_2\,\d t
+ \int_{0}^{T}\int_{\Omega_F^2(t)}\big(
(\bu_2\otimes \bu_2) :\nabla\bphi
- 2\,\D\bu_2:\D\bphi\,\big)\, \d\bx_2\,\d t\\
= \int_{0}^{T}\int_{\Omega_F^2(t) } \big(p_0^2\divg\partial_{t}\bphi
- p_1^2\divg\bphi)\, \d\bx_2\,\d t,
\end{multline}
for all $\bphi\in H_0^1(0,T;H_{0}^{1}(\Omega_{F}^2(t)))$.

By summing \eqref{test_a_omega} and \eqref{weak_pressure1} we get	
\begin{multline*}
\int_{0}^{T}\int_{\Omega}
\bu_2 \cdot \partial_{t}\bphi\, \d\bx_2\,\d t
\,+\, \int_{0}^{T}\int_{\Omega_F^2(t)}\big(
(\bu_2\otimes \bu_2) :\nabla\bphi
- 2\,\D\bu_2:\D\bphi\,\big)\, \d\bx_2\,\d t\\
= \int_{0}^{T}\int_{\Omega_F^2(t) } \big(p_0^2\divg\partial_{t}\bphi
- p_1^2\divg\bphi)\, \d\bx_2\,\d t,
\end{multline*}
for all $\bphi\in H_0^1(0,T;H_0^1(\Omega))$ such that $\mathbb{D}\bphi = 0$ in $S_2(t)$.

\subsection{Regularization procedure}
Since weak solutions are not regular enough to be used as test functions, first we need to construct a regularization in the time variable. The usual convolution is not applicable because the solutions are defined on a moving domain. Therefore we again use the change of variables. A similar construction can be found in \cite{bravin2018weak}.
Let $\bX$ and $\bY$ be the transformations described in Appendix \ref{Sec:LT}. First we define the Lagrangian velocity:
\begin{equation*}
\bar{\bu}(t,\by)=\nabla \bY(t,\bX(t,\by))\, \bu(t,\bX(t,\by)).
\end{equation*}
Then we extend the function $\bar{\bu}$ to the time interval $(-\infty,+\infty)$:
\begin{equation}\label{RegDef1}
\bar{\bu}(t,\cdot) \mapsto \begin{cases}
\xi(t)\bar{\bu}(0,\cdot), & t\leq 0,\\
\xi(t)\bar{\bu}(t,\cdot), & 0<t<T,\\
\xi(t)\bar{\bu}(T,\cdot), & t\geq T,
\end{cases}
\end{equation}
where $\xi\in C_{0}^{\infty}(\R)$ is such that $0\leq \xi\leq T$ and $\xi\equiv 1$ in an open neighbourhood of $[0,T]$.

Now, we define a regularization of $\bar{\bu}$ (convolution in time) by
\begin{equation}\label{RegDef2}
\bar{\bu}^h(t,\by)
= \int_{-\infty}^{+\infty} j_h(t-s)\,\bar{\bu}(s,\by)\,\d s,
\end{equation}
where $j_h\in C_{0}^{\infty}(\R)$ is an even, positive function with support in $(-h,h)$, and $\int_{-\infty}^{+\infty} j_h(s)\d s=1$.

This is a divergence-free function defined on the Lagrangian domain. At the end, we transform it back to the Eulerian domain:
\begin{align}\label{RegDef3}
\bu^h(t,\bx)
&=\nabla \bX(t,\bY(t,\bx))\,\bar{\bu}^h(t,\bY(t,\bx)).
\end{align}
This is again a divergence-free function and we will use it as a test function. Moreover, since
$$
\bar{\bu}^h\to \bar{\bu}\;{\rm in}\; L^2(0,T;H^1(\Omega))
\qquad\text{and}\qquad \bar{\bu}^h\to \bar{\bu}\,{\rm in}\; L^p(0,T;L^q(\Omega)),
\quad p,q\in(1,\infty),
$$
as $h\to 0$, (see Galdi \cite{GaldiNS00}, Lemma 2.5), it is easy to see that
$$
\bu^h\to \bu\;{\rm in}\; L^2(0,T;H^1(\Omega))
\qquad\text{and}\qquad \bu^h\to \bu\;{\rm in}\; L^p(0,T;L^q(\Omega)), \quad p,q\in(1,\infty),
$$
as $h\to 0$.
Moreover, we will need the following version of the Reynolds transport theorem:
\begin{lemma}	\label{Reynolds_gen}
	Let
	$
	\bu,\bv\in{L}^{2}(0,T;{H}^1(\Omega(t)))
	$
	such that $\bu(t),\bv(t)\in{L}^{2}(\Omega(t))$ for all $t\in[0,T]$,
	and let 
	$\bX(t):\Omega\to\Omega(t)$ be a volume-preserving $C^{\infty}$ diffeomorphism such that the derivatives \eqref{partialDerivatives} are bounded.
	Let $\bu^h$ denote the regularization defined by \eqref{RegDef1}-\eqref{RegDef3}.
	Then, for almost every $t\in[0,T]$,
	\begin{multline}	\label{Reynolds_gen_eq}
	\int_{0}^{t}\int_{\Omega(\tau)}\big (\bu\cdot\partial_t\bv^h + \bv\cdot\partial_t\bu^h\big)\,\d\bx\,\d\tau
	\\
	\to -\int_{0}^{t}\int_{\Omega(\tau)} \nabla(\bv\cdot\bu)\cdot \partial_t\bX\,\d\bx\,\d\tau
	+ \int_{\Omega(t)}\bv(t)\cdot \bu(t)\,\d\bx\,
	- \int_{\Omega(0)}\bv(0)\cdot \bu(0)\,\d\bx,
	\end{multline}
	when $h\to 0$.
\end{lemma}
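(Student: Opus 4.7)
The plan is to transfer the identity to the fixed Lagrangian frame $\Omega=\Omega(0)$ via $\bx=\bX(\tau,\by)$, where the regularization acts as a classical time convolution and a standard symmetric-mollification identity applies; pulling back to $\Omega(\tau)$ then produces both the boundary contributions and the transport correction $-\int\nabla(\bv\cdot\bu)\cdot\partial_t\bX$.

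The first step is the change of variables. Using
$(\partial_\tau\bu^h)(\tau,\bX(\tau,\by))=\partial_\tau[\bu^h(\tau,\bX)]-(\nabla\bu^h)(\tau,\bX)\,\partial_\tau\bX$
and the volume-preserving property of $\bX$, I write
\begin{equation*}
\int_{\Omega(\tau)}\bv\cdot\partial_\tau\bu^h\,\d\bx
=\int_\Omega\check\bv\cdot\partial_\tau\widehat{\bu^h}\,\d\by
-\int_{\Omega(\tau)}\bv\cdot\bigl[(\partial_\tau\bX\circ\bY)\cdot\nabla\bigr]\bu^h\,\d\bx,
\end{equation*}
with $\check\bv(\tau,\by):=\bv(\tau,\bX(\tau,\by))=(\nabla\bX)\,\bar\bv$ and $\widehat{\bu^h}(\tau,\by):=\bu^h(\tau,\bX(\tau,\by))=(\nabla\bX)\,\bar\bu^h$, together with the symmetric formula obtained by swapping $\bu\leftrightarrow\bv$.

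The second step identifies the transport correction. The sum of the two correction integrals equals $\int_{\Omega(\tau)}(\partial_\tau\bX\circ\bY)\cdot[\nabla\bu^h\cdot\bv+\nabla\bv^h\cdot\bu]\,\d\bx$; since $\bu^h\to\bu$ and $\bv^h\to\bv$ strongly in $L^2(0,T;H^1(\Omega(t)))$ and $\partial_\tau\bX\circ\bY$ is bounded, this tends, as $h\to 0$, to $\int_0^t\int_{\Omega(\tau)}\partial_\tau\bX\cdot\nabla(\bv\cdot\bu)\,\d\bx\,\d\tau$. With the minus sign inherited from Step~1, this is the first term on the right-hand side of \eqref{Reynolds_gen_eq} (reading $\partial_t\bX$ as its evaluation at $\bY(t,\bx)$, in the Lagrangian-to-Eulerian sense).

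The third step handles the Lagrangian main term. Setting $G:=(\nabla\bX)^T\nabla\bX$, expanding $\partial_\tau\widehat{\bu^h}=(\partial_\tau\nabla\bX)\bar\bu^h+\nabla\bX\,\partial_\tau\bar\bu^h$, and using $\check\bv=\nabla\bX\,\bar\bv$ (symmetrically for $\bv^h$), the sum $\check\bv\cdot\partial_\tau\widehat{\bu^h}+\check\bu\cdot\partial_\tau\widehat{\bv^h}$ splits into a weighted symmetric-convolution piece $(G\bar\bv)\cdot\partial_\tau\bar\bu^h+(G\bar\bu)\cdot\partial_\tau\bar\bv^h$ and an algebraic piece whose coefficient is $\partial_\tau G$. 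I recognise these as the two halves of $\partial_\tau(\bar\bu^h\cdot G\bar\bv^h)$: applying the product rule at the level of the double-smoothed expression, integrating on $(0,t)\times\Omega$, and passing to the limit via $\bar\bu^h\to\bar\bu$, $\bar\bv^h\to\bar\bv$ in $L^2(0,T;L^2(\Omega))$ and standard properties of time mollification, the $\partial_\tau G$ contributions cancel exactly, leaving $\bigl[\int_\Omega\bar\bu\cdot G\bar\bv\,\d\by\bigr]_0^t=\bigl[\int_{\Omega(\tau)}\bu\cdot\bv\,\d\bx\bigr]_0^t$, which is the desired boundary contribution.

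The main obstacle is the weighted symmetric-mollification identity in the presence of the smoothly time-varying weight $G(\tau,\by)$: for a constant weight it reduces to the classical identity stemming from $\partial_\tau(\bar\bu^h\cdot\bar\bv^h)$, but here one must verify that the extra terms arising when $\partial_\tau$ hits $\nabla\bX$ either combine pairwise (by the symmetry of $G$) or vanish in the limit, exploiting the $W^{1,\infty}$ time-regularity of $\nabla\bX$ furnished by the construction in Appendix~\ref{Sec:LT}. The exact cancellation of the two $\partial_\tau G$ terms is the algebraic feature underlying the entire Reynolds-type identity, and it reflects the fact that the Piola round-trip $\bu\mapsto\bar\bu\mapsto\bu^h$ is compatible with the transport structure of the moving domain.
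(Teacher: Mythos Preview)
Your Steps~1--2 are correct and in fact package the argument more cleanly than the paper: the pullback to the Lagrangian frame immediately isolates the transport correction, and the strong convergence $\bu^h\to\bu$, $\bv^h\to\bv$ in $L^2H^1$ handles the limit there without difficulty.

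The gap is in Step~3. After expanding, the Lagrangian main term reads
\[
\int_0^t\!\!\int_\Omega\Big[(G\bar\bv)\cdot\partial_\tau\bar\bu^h+(G\bar\bu)\cdot\partial_\tau\bar\bv^h\Big]\,\d\by\,\d\tau
+\int_0^t\!\!\int_\Omega\Big[\bar\bv\cdot(\nabla\bX)^T\partial_\tau\nabla\bX\,\bar\bu^h+\bar\bu\cdot(\nabla\bX)^T\partial_\tau\nabla\bX\,\bar\bv^h\Big]\,\d\by\,\d\tau,
\]
and you propose to compare this with $\int_0^t\int_\Omega\partial_\tau(\bar\bu^h\cdot G\bar\bv^h)$. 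The discrepancy contains the term
\[
\int_0^t\!\!\int_\Omega\Big[G(\bar\bv-\bar\bv^h)\cdot\partial_\tau\bar\bu^h+G(\bar\bu-\bar\bu^h)\cdot\partial_\tau\bar\bv^h\Big]\,\d\by\,\d\tau,
\]
and this is \emph{not} a standard mollification fact: $\|\bar\bv-\bar\bv^h\|_{L^2L^2}\to 0$ but $\|\partial_\tau\bar\bu^h\|_{L^2L^2}=O(1/h)$, so the product is only $O(1)$ a~priori. Your appeal to ``standard properties of time mollification'' does not close this, and the last paragraph acknowledges but does not resolve the difficulty.

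The paper avoids the double-smoothed comparison altogether. It performs an explicit Fubini swap on $\int(G\bar\bv)\cdot\partial_\tau\bar\bu^h$ (as in the fixed-domain Remark preceding the proof), which transfers the time derivative across the convolution at the cost of (i) boundary corrections near $\tau=0,t$ that produce $[\int\bar\bu\cdot G\bar\bv]_0^t$ via Lebesgue differentiation, and (ii) a commutator contribution $\int\!\!\int j_h'(\tau-s)[G(\tau)-G(s)]\bar\bv(\tau)\cdot\bar\bu(s)$, which is bounded because $|G(\tau)-G(s)|\le C|\tau-s|$ and converges to $-\int\bar\bv\cdot\partial_\tau G\,\bar\bu$; this is what exactly cancels your ``algebraic piece''. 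The paper encodes this mechanism through the auxiliary function $\bU_h$ of Lemma~\ref{weak_convergence} (which is precisely $\nabla\bY^T(j_h*(G\bar\bu))$ pulled back) together with a weak-convergence argument, and then tracks the remaining pieces \eqref{E1}--\eqref{E9} by direct computation. If you replace your double-smoothing reduction with this Fubini/commutator step, your outline becomes a valid proof, arguably more transparent than the paper's bookkeeping.
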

The proof is technical and we postpone it to Appendix \ref{Sec:Reynolds_gen}.

\subsection{Regularity of pressure}\label{Sec:pressurereg}

For the proof of Theorem \ref{WeakStrong} we need to prove more regularity on the pressure $p_2$ in order to define and estimate $\left\langle \widetilde{\mathcal{G}}P_2,\bu_{1}\right\rangle$. Because of the structure of the operator $\widetilde{\mathcal{G}}$ it will be sufficient to prove the local-in-time regularity of $p_2$, i.e. the regularity of $t p_2$.

First we present two auxiliary lemmas:
\begin{lemma}\label{IntConvSerrin}
	Let $\bu_2$ be a weak solution to the problem \eqref{FSINoslip} satisfying the Prodi-Serrin condition \eqref{LrLs}. Then,
	$$
	(\bu_{2}\cdot\nabla)\bu_{2}\in L^p(0,T;L^q(\Omega)),
	\qquad
	\frac{1}{q} = \frac{1}{2} + \frac{1}{s},
	\quad
	\frac{1}{p} = \frac{1}{2} + \frac{1}{r}.
	$$	
\end{lemma}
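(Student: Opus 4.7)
The plan is to treat $(\bu_2\cdot\nabla)\bu_2$ as a pointwise product and apply Hölder's inequality twice, once in the space variable and once in the time variable, splitting the factor $\bu_2$ from the factor $\nabla\bu_2$.

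First I would recall the two pieces of regularity available for $\bu_2$. From the Leray--Hopf class of weak solutions (Definition \ref{definition}) we have $\bu_2\in L^2(0,T;V_2(t))\hookrightarrow L^2(0,T;H^1_0(\Omega))$, so in particular $\nabla\bu_2\in L^2(0,T;L^2(\Omega))$. From the Prodi--Serrin assumption \eqref{LrLs} we have $\bu_2\in L^r(0,T;L^s(\Omega))$ with $\frac{3}{s}+\frac{2}{r}=1$ and $s\in(3,\infty)$.

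Next, for almost every $t\in(0,T)$, Hölder's inequality in $\bx$ with exponents $s$ and $2$ gives
\begin{equation*}
\|(\bu_2\cdot\nabla)\bu_2(t)\|_{L^q(\Omega)}\leq \|\bu_2(t)\|_{L^s(\Omega)}\,\|\nabla\bu_2(t)\|_{L^2(\Omega)},
\qquad \frac{1}{q}=\frac{1}{s}+\frac{1}{2}.
\end{equation*}
Then a second Hölder in $t$ with exponents $r$ and $2$ yields
\begin{equation*}
\|(\bu_2\cdot\nabla)\bu_2\|_{L^p(0,T;L^q(\Omega))}\leq \|\bu_2\|_{L^r(0,T;L^s(\Omega))}\,\|\nabla\bu_2\|_{L^2(0,T;L^2(\Omega))},
\qquad \frac{1}{p}=\frac{1}{r}+\frac{1}{2},
\end{equation*}
which is finite by the two regularity statements above. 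This gives exactly the claimed membership $(\bu_2\cdot\nabla)\bu_2\in L^p(0,T;L^q(\Omega))$.

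There is essentially no real obstacle here; the statement is a standard Hölder interpolation and the only thing to verify is that the exponents match the stated formulas, which they do by construction. The lemma will be used later together with the energy bound to control the pressure in the appropriate Lebesgue space via an elliptic estimate, but that is a separate argument in the subsequent pressure-regularity proposition.
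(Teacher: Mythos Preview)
Your proof is correct and follows essentially the same approach as the paper: apply H\"older's inequality in space with exponents $s$ and $2$, then in time with exponents $r$ and $2$, using the Prodi--Serrin regularity for $\bu_2$ and the $L^2(0,T;L^2(\Omega))$ bound on $\nabla\bu_2$ from the weak-solution class. The paper writes out the same chain of inequalities in slightly more detail but with no additional ideas.
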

\begin{proof}	
	Since $\bu_{2}\in L^r(0,T;L^s(\Omega)), \nabla\bu_{2}\in L^2(0,T;L^2(\Omega))$, $\frac{3}{s}+\frac{2}{r}=1$ and $s\in(3,\infty]$, it follows from H\"{o}lder's inequality that
	\begin{align*}
	&\left\| \bu_{2}\cdot\nabla\bu_{2} \right\|_{L^p(0,T;L^q(\Omega))}^p
	=
	\int_{0}^{T}\left( \int_{\Omega } |\bu_{2}\cdot\nabla\bu_{2}|^{q}\,\d\bx_2 \right)^\frac{p}{q}\d t\\
	&\qquad
	\leq
	\int_{0}^{T}\left(
	\|\bu_{2}\|_{L^s(\Omega)}^{q} \|\nabla\bu_{2}\|_{L^2(\Omega)}^{q}
	\right)^\frac{p}{q}\d t\,
	= \int_{0}^{T}\|\bu_{2}\|_{L^s(\Omega)}^{p}
	\|\nabla\bu_{2}\|_{L^2(\Omega)}^{p}\, \d t\\
	&\qquad
	\leq \|\bu_{2}\|_{L^r(0,T;L^s(\Omega))}^{p}
	\|\nabla\bu_{2}\|_{L^2(0,T;L^2(\Omega))}^{p},
	\end{align*}
	for
	\begin{displaymath}
	\frac{1}{s}+\frac{1}{2} = \frac{1}{q},
	\quad
	\frac{1}{r}+\frac{1}{2} = \frac{1}{p}.
	\end{displaymath}
\end{proof}

\begin{lemma} \label{weak_estimates}
	Let $p,q$ be defined as in Lemma \ref{IntConvSerrin}.
	If $\bu\in L^2(0,T; H^1(\Omega))\cap L^{\infty}(0,T; L^2(\Omega))$, then
	$$
	\bu\in L^{p'}(0,T;L^{q'}(\Omega)),
	\qquad
	\frac{1}{p'}+\frac{1}{p}=1,
	\quad
	\frac{1}{q'}+\frac{1}{q}=1.
	$$
	with the estimate
	\begin{equation*}
	\left\| \bu \right\|_{L^{p'}(0,T;L^{q'}(\Omega))}
	\leq 
	\|\bu\|_{L^{\infty}(0,T;L^{2}(\Omega))}
	+
	\|\bu\|_{L^2(0,T;H^1(\Omega))}.
	\end{equation*}
\end{lemma}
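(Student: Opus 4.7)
The statement is a standard Ladyzhenskaya-type interpolation; the key point is to verify that the exponent pair $(p',q')$ lies on the scaling line determined by embedding $L^\infty_tL^2_x\cap L^2_tH^1_x$ into $L^{p'}_tL^{q'}_x$ in three space dimensions. First I would compute the conjugate exponents explicitly: from $\frac{1}{p}=\frac{1}{2}+\frac{1}{r}$ and $\frac{1}{q}=\frac{1}{2}+\frac{1}{s}$ one gets $\frac{1}{p'}=\frac{1}{2}-\frac{1}{r}$, $\frac{1}{q'}=\frac{1}{2}-\frac{1}{s}$, and using the Prodi--Serrin relation $\frac{3}{s}+\frac{2}{r}=1$ a short computation gives
\begin{equation*}
\frac{3}{q'}+\frac{2}{p'}=\frac{3}{2},
\end{equation*}
which is precisely the scaling of the classical 3D energy embedding. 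In particular $q'\in(2,6)$ and $p'\in(2,\infty)$ for $s\in(3,\infty)$.

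Next I would interpolate spatially. Writing $\frac{1}{q'}=\frac{\theta}{2}+\frac{1-\theta}{6}$ one obtains $\theta=1-\frac{3}{s}$ and $1-\theta=\frac{3}{s}$, so for almost every $t$
\begin{equation*}
\|\bu(t)\|_{L^{q'}(\Omega)}\le \|\bu(t)\|_{L^{2}(\Omega)}^{1-3/s}\,\|\bu(t)\|_{L^{6}(\Omega)}^{3/s}.
\end{equation*}
Raising to the $p'$-th power and noting that (using the explicit formula $p'=\tfrac{2s}{3}$) the exponent on the $L^6$-norm is exactly $\frac{3}{s}\,p'=2$, integration in time gives
\begin{equation*}
\int_{0}^{T}\|\bu(t)\|_{L^{q'}(\Omega)}^{p'}\,\d t
\le \|\bu\|_{L^{\infty}(0,T;L^{2}(\Omega))}^{(1-3/s)p'}\int_{0}^{T}\|\bu(t)\|_{L^{6}(\Omega)}^{2}\,\d t,
\end{equation*}
and the 3D Sobolev embedding $H^{1}(\Omega)\hookrightarrow L^{6}(\Omega)$ bounds the last integral by $C\|\bu\|_{L^{2}(0,T;H^{1}(\Omega))}^{2}$.

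Finally, taking $p'$-th roots yields
\begin{equation*}
\|\bu\|_{L^{p'}(0,T;L^{q'}(\Omega))}\le C\,\|\bu\|_{L^{\infty}(0,T;L^{2}(\Omega))}^{1-3/s}\,\|\bu\|_{L^{2}(0,T;H^{1}(\Omega))}^{3/s},
\end{equation*}
and since $(1-\tfrac{3}{s})+\tfrac{3}{s}=1$ Young's inequality $a^{\theta_1}b^{\theta_2}\le \theta_1 a+\theta_2 b\le a+b$ converts the product into the claimed sum, giving the estimate stated in the lemma. There is no real obstacle here; the only thing to double-check is the arithmetic linking the Prodi--Serrin scaling $\frac{3}{s}+\frac{2}{r}=1$ to $\frac{3}{q'}+\frac{2}{p'}=\frac{3}{2}$, which ensures the interpolation is balanced so that Young's inequality closes cleanly.
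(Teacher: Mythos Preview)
Your proof is correct and follows essentially the same approach as the paper: spatial interpolation between $L^2$ and $L^6$, the Sobolev embedding $H^1\hookrightarrow L^6$, verification of the scaling relation $\frac{3}{q'}+\frac{2}{p'}=\frac{3}{2}$, and Young's inequality to convert the product bound into a sum. The only cosmetic difference is that you compute the interpolation parameter and $p'$ explicitly in terms of $s$, whereas the paper works with an abstract $\alpha$ and checks the scaling relation afterwards.
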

\begin{proof}
	We have $\bu\in L^2(0,T; L^6(\Omega))\cap L^{\infty}(0,T; L^2(\Omega))$ by the Sobolev inequality. Now, by the interpolation inequality, we conclude
	\begin{align*}
	\left\| \bu \right\|_{L^{p'}(0,T;L^{q'}(\Omega))}^{p'}
	&=
	\int_{0}^{T}\|\bu\|_{L^{q'}(\Omega)}^{p'}\d t
	\leq
	\int_{0}^{T}\left(
	\|\bu\|_{L^2(\Omega)}^{\alpha} \|\bu\|_{L^6(\Omega)}^{1-\alpha}
	\right)^{p'}\d t\\
	&\leq
	\|\bu\|_{L^{\infty}(0,T;L^{2}(\Omega))}^{\alpha p'}
	\|\bu\|_{L^2(0,T;L^6(\Omega))}^{(1-\alpha)p'},
	\end{align*}
		for $p'\in(2,\infty), q'\in (2,6)$ and $\alpha\in(0,1)$  such that
		\begin{displaymath}
		\frac{1}{q'} = \frac{\alpha}{2} + \frac{1-\alpha}{6}
		\quad\text{and}\quad
		\frac{1}{p'} = \frac{1-\alpha}{2}.
		\end{displaymath}
	By Young's inequality, it follows that
	\begin{align*}
	\left\| \bu \right\|_{L^{p'}(0,T;L^{q'}(\Omega))}
	&\leq
	\alpha \|\bu\|_{L^{\infty}(0,T;L^{2}(\Omega))}
	+ (1-\alpha)\|\bu\|_{L^2(0,T;L^6(\Omega))}
	\leq
	 \|\bu\|_{L^{\infty}(0,T;L^{2}(\Omega))}
	+ \|\bu\|_{L^2(0,T;L^6(\Omega))}
	,
	\end{align*}
	for all $p'\in(2,\infty), q'\in (2,6)$ such that
	\begin{displaymath}
	\frac{3}{q'} + \frac{2}{p'} = \frac{3}{2}.
	\end{displaymath}
	Now, for $p,q$ as in Lemma \ref{IntConvSerrin} we have
	\begin{displaymath}
	\frac{3}{q} + \frac{2}{p} 
	= 3\left(\frac{1}{2} + \frac{1}{s} \right)
	+ 2\left(\frac{1}{2} + \frac{1}{r} \right)
	= \frac{5}{2} + \left(\frac{3}{s} + \frac{2}{r} \right) \stackrel{\eqref{LrLs}}{=} \frac{7}{2},
	\end{displaymath}
	Finally, for $p',q'$ such that
	$\frac{1}{p'}+\frac{1}{p}=1,
	\frac{1}{q'}+\frac{1}{q}=1$, we conclude that
	\begin{displaymath}
	\frac{3}{q'} + \frac{2}{p'} 
	= 3\left(1 - \frac{1}{q} \right)
	+ 2\left(1 - \frac{1}{p} \right)
	= 5 - \left(\frac{3}{q} + \frac{2}{p} \right) 
	= \frac{3}{2},
	\end{displaymath}
	and it easy to see that $p'\in(2,\infty), q'\in (2,6)$.	
\end{proof}

\begin{proposition} \label{pressure_regularity}
	Let $(\bu_2,p_2)$ be a weak solution given by Theorem \ref{pressure}, and let $\bu_2$ satisfy the Prodi-Serrin condition \eqref{LrLs}. Then the following regularity result holds:
	$$
	t\bu_{2}\in L^p(0,T;W^{2,q}(\Omega_F^2(t))),
	\quad
	t\partial_{t}\bu_{2}, t\nabla p_{2}\in L^p(0,T;L^{q}(\Omega_F^2(t))),
	$$
	for $\frac{1}{q} = \frac{1}{2} + \frac{1}{s}$, $\frac{1}{p} = \frac{1}{2} + \frac{1}{r}.$
\end{proposition}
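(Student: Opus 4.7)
The plan is to bootstrap regularity from the Prodi-Serrin assumption by recasting the problem as a perturbed Stokes system on a fixed domain and then invoking maximal $L^p$--$L^q$ regularity. The multiplication by $t$ is the standard device to avoid compatibility conditions on the initial datum $\bu_0$: since we only know $\bu_0\in L^2$, we cannot expect global parabolic regularity up to $t=0$, but away from the initial time the Prodi-Serrin condition forces $\bu_2$ to behave like a strong solution. So the first move is to write down the equations satisfied by $\tilde{\bu} := t\bu_2$ (and $\tilde p := t p_2$) on the fluid domain; the extra source term $\bu_2$ generated by $\partial_t$ hitting the factor $t$ is manageable because of Lemma \ref{weak_estimates}.

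Next, since $\Omega_F^2(t)$ moves in time, I would transport everything to the fixed reference fluid domain $\Omega_F^2(0)$ via the change of variables $\bX_2,\bY_2$ from the Appendix, exactly as in Step 1 of the proof of Theorem \ref{pressure}. Under this transformation the Navier--Stokes--rigid body system becomes a Stokes system (with the rigid body coupling at $\partial S_0$) plus lower-order perturbations involving $g^{ij}-\delta_{ij}$, Christoffel symbols $\Gamma_{ij}^k$, and $\partial_t \bY_2$, all of which are bounded in $W^{1,\infty}$ uniformly in $t$ because the rigid motion has Lipschitz regularity in time and $d(S_2(t),\partial\Omega)>\delta_2$. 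The forcing of the resulting Stokes problem consists of $(\bu_2\cdot\nabla)\bu_2$ plus the perturbation terms acting on $\tilde{\bu}$ plus the extra source $\bu_2$; by Lemma \ref{IntConvSerrin} the convective term sits in $L^p(0,T;L^q(\Omega))$, and by Lemma \ref{weak_estimates} $\bu_2$ sits in $L^{p'}(0,T;L^{q'}(\Omega))\hookrightarrow L^p(0,T;L^q)$ on the bounded interval $(0,T)$.

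Now the crucial analytic input is maximal $L^p$--$L^q$ regularity for the fluid--rigid body Stokes operator on a fixed domain, which is available in the literature (see \cite{GGH13,MaityTucsnak18,Takahashi03} for constructions tailored to exactly this coupled system). Applied to $\tilde{\bu}$ with zero initial data (which is the reason for the factor $t$), this yields
\begin{equation*}
\tilde{\bu}\in L^p(0,T;W^{2,q})\cap W^{1,p}(0,T;L^q),\qquad \nabla\tilde p\in L^p(0,T;L^q),
\end{equation*}
provided the right-hand side lies in $L^p(0,T;L^q)$. The perturbation terms involving $g^{ij}-\delta_{ij}$, $\Gamma_{ij}^k$ and $\partial_t\bY_2$ can then either be absorbed by smallness (on a short time interval) followed by a continuation argument, or treated as lower-order forcing because they only consume one derivative of $\tilde{\bu}$ and are multiplied by $L^\infty$ coefficients. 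Transporting the regularity back to the Eulerian frame via $\bX_2\in W^{1,\infty}(0,T;C^\infty)$ preserves $L^p(0,T;W^{2,q})$ and $L^p(0,T;L^q)$, giving the stated conclusion for $t\bu_2$, $t\partial_t\bu_2$ and $t\nabla p_2$.

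The main obstacle is ensuring that the fluid--structure maximal regularity theorem applies cleanly in the range of exponents $p,q$ produced by the Prodi-Serrin scaling $\frac{3}{s}+\frac{2}{r}=1$ with $s\in(3,\infty)$, and that the perturbation operators coming from the change of variables are genuinely lower-order in this functional setting; the factor $t$ is essential to avoid compatibility conditions on $\bu_0$, and one must check that the extra source term $\bu_2$ produced by differentiating $t\bu_2$ indeed lies in the right space, which is precisely what Lemma \ref{weak_estimates} provides.
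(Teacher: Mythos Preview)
Your proposal is essentially the paper's own argument: multiply by $t$ to kill the initial datum, pass to the fixed domain via $\bX_2,\bY_2$, invoke the maximal $L^p$--$L^q$ regularity for the coupled Stokes--rigid body operator from \cite{GGH13}, and absorb the change-of-variables perturbations by a smallness/fixed-point step followed by continuation. The paper packages this slightly differently, stating the linear result as a separate lemma (Lemma~\ref{strongStokes}) for an auxiliary Stokes--rigid body problem on the moving domain with forcing $\mathbf{g}=\bu_2-t(\bu_2\cdot\nabla)\bu_2$, $\mathbf{g}_1=\ba$, $\mathbf{g}_2=\bomega$, and then identifies $t\bu_2$ with the unique weak solution of that problem via weak--strong uniqueness for the linear system (citing \cite{GlassSueur}); but the analytic content is the same. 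One small correction: the perturbation $(\mathcal{L}-\Delta)$ contains the second-order piece $(g^{jk}-\delta_{jk})\partial_j\partial_k$, so it is not lower order---only your first alternative (smallness on a short interval plus continuation, exactly as in \cite{GGH13}) works here.
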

\begin{proof}
	
	The proof of Proposition \ref{pressure_regularity} is analogous to the proof of \cite[Proposition 3]{GlassSueur} and \cite[Lemma 3]{bravin2018weak}. The only difference is that we consider the 3D problem and therefore use the integrability of the convective term given by Lemma \ref{IntConvSerrin}. Therefore we will give a sketch of the proof without going into details. The idea is to consider the following auxiliary system:	
	\begin{equation}	\label{Stokes}
	\begin{array}{l}
	\left.
	\begin{array}{l}
	\partial_{t}\bv - \Delta\bv + \nabla p
	= \mathbf{g} , \\
	\divg\bv = 0
	\end{array}
	\right\} \;\mathrm{in}\;\bigcup_{t\in(0,T)} \{t\}\times\Omega_{F}(t),
	\\
	\left.
	\begin{array}{l}
	{\frac{d}{dt}\ba} = -\int_{\partial S(t)}{\T}(\bv,p) \bn\,\d\bgamma(\bx) 
	{\,+\, \mathbf{g}_1}, \\
	\frac{d}{dt}(\J\bomega) = -\int_{\partial S(t)}(\bx-\bq(t))\times {\T}(\bv,p)\bn\,\d\bgamma(\bx) 
	{\,+\, \mathbf{g}_2}
	\end{array}
	\right\} \;\mathrm{in}\;(0,T),
	\\
	\bv ={\ba}+\bomega\times (\bx-\bq),\quad \mathrm{on}\;\bigcup_{t\in(0,T)} \{t\}\times\partial S(t),
	\\
	\bv = 0 \quad \mathrm{on}\, \partial \Omega,
	\end{array}
	\end{equation}
	{where $\mathbf{g}\in L^{p}(0,T;L^{q}(\Omega_F^2(t))),\, \mathbf{g}_1,\mathbf{g}_2\in L^{p}(0,T)$.}
	Then we get the corresponding regularity result which is given by the following lemma:	
	\begin{lemma} \label{strongStokes}
		There exists a unique solution to the system \eqref{Stokes} on $[0, T]$ with vanishing initial data which satisfies
		$$
		\bv\in L^p(0,T;W^{2,q}(\Omega_F^2(t))),
		\quad
		\partial_{t}\bv, \nabla {p}\in L^p(0,T;L^{q}(\Omega_F^2(t))).
		$$
	\end{lemma}	
	The proof of Lemma \ref{strongStokes} is analogous to the proof of \cite[Lemma 4]{GlassSueur}
	and is based on a result contained in \cite{GGH13}.
	First, by using the change of coordinates described in Appendix \ref{Sec:LT}, we transform the problem \eqref{Stokes} to the fixed domain
		\begin{equation*}	\label{Stokes_fixed}
		\begin{array}{l}
		\left.
		\begin{array}{l}
		\partial_{t}\tilde{\bv} - \Delta\tilde{\bv} + \nabla \tilde{p}
		= \tilde{\mathbf{f}} , \\
		\divg\tilde{\bv} = 0
		\end{array}
		\right\} \;\mathrm{in}\; (0,T)\times\Omega_{F},
		\\
		\left.
		\begin{array}{l}
		\frac{d}{dt}\tilde{\ba} = -\int_{\partial S_0}{\mathcal{T}}(\tilde{\bv},\tilde{p}) \bn\,\d\gamma(\by) \,+\,\tilde{\mathbf{f}}_1, \\
		\frac{d}{dt}(\J\tilde{\bomega}) = -\int_{\partial S_0}\by\times {\mathcal{T}}(\tilde{\bv},\tilde{p})\bn\,\d\gamma(\by)
		\,+\,\tilde{\mathbf{f}}_2
		\end{array}
		\right\} \;\mathrm{in}\;(0,T),
		\\
		\tilde{\bv} =\tilde{\ba} + \tilde{\bomega}\times \by,\quad \mathrm{on}\; (0,T)\times\partial S_0,
		\\
		\tilde{\bv} = 0 \quad \mathrm{on}\, \partial \Omega,
		\end{array}
		\end{equation*}
	where 
		$$
		\tilde{\mathbf{f}}
		= \tilde{\mathbf{g}} +(\mathcal{L}-\Delta)\tilde{\bv}-\mathcal{M}\tilde{\bv}-(\mathcal{G}-\nabla)\tilde{\bv},
		\qquad
		\tilde{\mathbf{f}}_1
		=\Q_1^T\mathbf{g}_1-\tilde{\bomega}\times\tilde{\ba},
		\qquad
		\tilde{\mathbf{f}}_2
		=\Q_1^T\mathbf{g}_2,
		$$
		$$
		\tilde{\mathbf{g}}(t,\by)
		=\nabla\bY_2(t,\bX_2(t,\by)) \mathbf{g}(t,\bX_2(t,\by))
		$$
	Then, we obtain the required regularity from the maximal regularity result \cite[Theorem 4.1]{GGH13} by using the fixed point procedure from \cite[Sections 5-7]{GGH13}.
	Fixed point procedure provides a solution for a sufficiently small time $T'$. However, since $T'$ does not depend on the initial data the solution can be extended to the interval $[0,T']$ by iterating this procedure in the standard way, see \cite[Theorem 4.1]{GGH13} which is stated for an exterior domain but still hold true for bounded domain (\cite[Section 7]{GGH13}).
	
	Now, Proposition \ref{pressure_regularity} is a consequence of Lemma \ref{strongStokes} and the following:
	\begin{enumerate}
		\item Every strong solution of \eqref{Stokes} is a weak solution of \eqref{Stokes} (see \cite[Lemma 5]{GlassSueur}).
		
		\item The weak solution of \eqref{Stokes} is unique (see \cite[Lemma 8]{GlassSueur}).
		
		\item The pair $t\bu_{2}$ is a weak solution of \eqref{Stokes} with 
		$$
		\mathbf{g} = \bu_{2}-t(\bu_{2}\cdot\nabla)\bu_{2}{\in L^{p}(0,T;L^{q}(\Omega_F^2(t)))},
		$$
		$$
		\mathbf{g}_1=\ba \in L^{p}(0,T),
		\quad \mathbf{g}_2=\bomega \in L^{p}(0,T)
		$$			
		(see \cite[Lemma 6]{GlassSueur}).
		The regularity of $\mathbf{g}, \mathbf{g}_1$ and $\mathbf{g}_2$ follows 
		from Lemma \ref{IntConvSerrin} and the fact that $\ba,\bomega\in L^{\infty}(0,T)$.	 
	\end{enumerate}
\end{proof}

The next step is to use the regularity result from Proposition \ref{pressure_regularity} to estimate the difference between the pressure terms.

\begin{lemma}	\label{estimate_pressure1}
	Let $P_2$ be the transformed pressure defined by \eqref{eq:Transformed}$_2$
	and $\widetilde{\mathcal{G}}$ the operator defined by \eqref{weak_pressure}.
	Then,
	\begin{equation}	\label{pressure_identity}
	\left\langle (\widetilde{\mathcal{G}}-\nabla)P_2,\bpsi\right\rangle
	= \int_{0}^{T} \int_{\Omega _{F}^{1}(t)} (\mathcal{G}-\nabla) P_2\cdot \bpsi \,\d\bx_{1}\,\d t
	\end{equation}
	for all $\bpsi\in L^{p'}(0,T;L^{q'}(\Omega_{F}^{1}(t)))$,
	where $\mathcal{G}$ is an operator defined by \eqref{OpP}, and $p', q'$ are defined as in Lemma \ref{weak_estimates}. In addition, we have estimate
	\begin{multline}	\label{estimate_pressure2}
	\Vert (\mathcal{G}-\nabla)P_2 \Vert
	_{L^{p}(0,T;L^{q}(\Omega _{F}^{1}(t)))}\\
	\leq C\left( ||\mathbf{a}_{1}-\mathbf{A}_{2}||_{L^{\infty}(0,T)}+||
	\boldsymbol{\omega }_{1}-\boldsymbol{\Omega }_{2}||_{L^{\infty}(0,T)}\right)
	\Vert t\nabla P_{2}\Vert_{L^{p}(0,T;L^{q}(\Omega _{F}^{1}(t)))},
	\end{multline}
	where $C>0$ is nondecreasing with respect to $T$.
\end{lemma}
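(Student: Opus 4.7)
\textbf{Plan of proof of Lemma \ref{estimate_pressure1}.}
The plan is to use the local-in-time regularity of $p_2$ provided by Proposition \ref{pressure_regularity} to upgrade the definition of $\widetilde{\mathcal{G}}P_2$ from a bounded functional on $H^1_0(0,T;V_1(t))$ to an honest integral against the gradient of $P_2$, and then to isolate the $(\mathcal{G}-\nabla)P_2$ piece as the geometric defect created by the change of variables $\widetilde{\bX}_1$. Since at the initial time both solutions share the same rigid data, one has $\widetilde{\bX}_1(0,\cdot)=\mathrm{id}$, and it is precisely this vanishing at $t=0$ that will allow us to trade a factor of $t$ against the weighted regularity $t\nabla P_2\in L^p(L^q)$.

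First I would start from the defining identity
\begin{equation*}
\langle\widetilde{\mathcal{G}}P_2,\bpsi\rangle
= -\int_0^T\!\!\int_{\Omega_F^2(\tau)} p_2\,\divg_{\bx_2}\bphi\,\d\bx_2\d\tau,
\qquad \bphi(t,\bx_2)=\nabla\widetilde{\bX}_1(t,\bx_2)^T\,\bpsi(t,\widetilde{\bX}_1(t,\bx_2)),
\end{equation*}
and pull it back to $\Omega_F^1(t)$ via $\widetilde{\bX}_2$. The transport computations from Section \ref{Sec:TransformedWeak} give $\divg_{\bx_2}\bphi$ as a bilinear expression in $\bpsi$ and the first two derivatives of $\widetilde{\bX}_1$, with coefficients that reduce to $\divg_{\bx_1}\bpsi$ when $\widetilde{\bX}_1$ equals the identity. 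Using the regularity $tp_2\in L^p(0,T;W^{1,q}(\Omega_F^2(t)))$ from Proposition \ref{pressure_regularity}, I can integrate by parts in space (on $\Omega_F^1(t)$ after the change of variables), which moves derivatives from $\bpsi$ onto $P_2$ and yields, by direct reading, the covariant gradient operator $\mathcal{G}$ defined in \eqref{OpP}. Subtracting the identity transformation part $\int\nabla P_2\cdot\bpsi$ gives \eqref{pressure_identity}; the difference $(\mathcal{G}-\nabla)P_2$ involves only derivatives of $P_2$ multiplied by coefficients built from $g^{ij}-\delta^{ij}$, $\Gamma^k_{ij}$ and $\nabla\widetilde{\bX}_1-\I$, i.e. quantities that vanish whenever $\widetilde{\bX}_1=\mathrm{id}$.

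For the quantitative estimate I would exploit that $\widetilde{\bX}_1(0,\cdot)=\mathrm{id}$ because $\bq_1(0)=\bq_2(0)$ and $\Q_1(0)=\Q_2(0)=\I$. Writing $\widetilde{\bX}_1(t,\cdot)-\mathrm{id}=\int_0^t\partial_s\widetilde{\bX}_1(s,\cdot)\,\d s$ and invoking \eqref{TrEst1} (transferred to $\widetilde{\bX}_1$ which has the same structure as $\widetilde{\bX}_2$), the coefficient estimates become
\begin{equation*}
\|g^{ij}-\delta^{ij}\|_{L^\infty(\Omega_F^1(t))}+\|\Gamma^k_{ij}\|_{L^\infty(\Omega_F^1(t))}+\|\nabla\widetilde{\bX}_1-\I\|_{L^\infty(\Omega_F^1(t))}
\le C\,t\bigl(\|\ba_1-\bA_2\|_{L^\infty(0,t)}+\|\bomega_1-\bOmega_2\|_{L^\infty(0,t)}\bigr),
\end{equation*}
with $C$ nondecreasing in $T$. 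Plugging this pointwise bound into the explicit expression for $(\mathcal{G}-\nabla)P_2$ and applying H\"older in space, the factor $t$ is exactly what is needed to pair the coefficient bound with the weighted norm $\|t\nabla P_2\|_{L^p(L^q)}$ supplied by Proposition \ref{pressure_regularity}, giving \eqref{estimate_pressure2}. Finally, once \eqref{estimate_pressure2} is established, the identity \eqref{pressure_identity} for the broader class $\bpsi\in L^{p'}(0,T;L^{q'}(\Omega_F^1(t)))$ (rather than only for smooth test functions used in the derivation) follows by density.

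The main obstacle will be the book-keeping in the second step: one must be careful that the pointwise integration by parts is licit (which requires the local-in-time $W^{1,q}$ regularity of $p_2$, not merely its distributional existence from Theorem \ref{pressure}), and that every coefficient appearing in $(\mathcal{G}-\nabla)P_2$ genuinely vanishes when $\widetilde{\bX}_1=\mathrm{id}$, so that the factor of $t$ can indeed be extracted and paired with $\|t\nabla P_2\|_{L^p(L^q)}$. All other steps are reductions to H\"older's inequality and the transport estimates of \cite{chemetov2017weak}.
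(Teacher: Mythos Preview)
Your proposal is correct and follows essentially the same approach as the paper: use the local-in-time regularity $t\nabla p_2\in L^p(L^q)$ from Proposition~\ref{pressure_regularity} to integrate by parts, identify $(\mathcal{G}-\nabla)P_2$ as the geometric defect, extract a factor of $t$ from the coefficients using $\widetilde{\bX}_1(0,\cdot)=\mathrm{id}$, and conclude by H\"older plus density. Two minor simplifications worth noting: the paper integrates by parts on $\Omega_F^2(t)$ \emph{before} changing variables (so that $\nabla p_2\cdot\bphi$ is the clean starting point), and the explicit form obtained is simply $(\mathcal{G}-\nabla)P_2=(\nabla\widetilde{\bX}_1\nabla\widetilde{\bX}_1^T-\I)\nabla P_2$, so only $g^{ij}-\delta^{ij}$ enters and no Christoffel symbols are needed.
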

\begin{proof}
	By Proposition \ref{pressure_regularity}, $t\nabla p_2\in L^p(0,T;L^q(\Omega_F^2(t)))$, which implies that
	$t\nabla P_2\in L^p(0,T;L^q(\Omega_F^1(t)))$, so we can write
	\begin{align*}
	&\left\langle (\widetilde{\mathcal{G}}-\nabla)P_2,\bpsi\right\rangle
	=
	-\int_{0}^{T} \int_{\Omega _{F}^{2}(t)} p_2\cdot \divg\bphi \,\d\bx_{2}\,\d t
	{
		-\int_{0}^{T} \int_{\Omega _{F}^{1}(t)} \nabla P_2\cdot \bpsi \,\d\bx_{1}\,\d t
	}\\
	&\qquad=
	\int_{0}^{T} \int_{\Omega _{F}^{2}(t)} \nabla p_2\cdot \bphi \,\d\bx_{2}\,\d t
	{
		-\int_{0}^{T} \int_{\Omega _{F}^{1}(t)} \nabla P_2\cdot \bpsi \,\d\bx_{1}\,\d t
	}\\
	&\qquad
	= \int_{0}^{T} \int_{\Omega _{F}^{1}(t)} \nabla\widetilde{\bX}_1\nabla\widetilde{\bX}_1^T\nabla P_2\cdot \bpsi \,\d\bx_{1}\,\d t
	{
		-\int_{0}^{T} \int_{\Omega _{F}^{1}(t)} \nabla P_2\cdot \bpsi \,\d\bx_{1}\,\d t
	}\\
	&\qquad
	= \int_{0}^{T} \int_{\Omega _{F}^{1}(t)} (\mathcal{G}-\nabla) P_2\cdot \bpsi \,\d\bx_{1}\,\d t
	\end{align*}
	for all $\bphi\in C_{0}^{\infty}((0,T)\times\Omega_{F}^{2}(t))$ and for $\bpsi=\nabla\widetilde{\bX}_{2}^T\bphi$.
	
	By the construction of $\widetilde{\bX}_1$, it can be shown that
	$$
	\left\|\partial_{t}\widetilde{\bX}_1(t,\cdot)\right\|_{W^{1,\infty}(\Omega_{F}^{1}(t))}
	\leq C
	(\vert\mathbf{a}_{1}(t)-\mathbf{A}_{2}(t)\vert
	+\vert\bomega_{1}(t)-\bOmega_{2}(t)\vert),
	\qquad \forall t\in[0,T]
	$$
	(see \cite{GlassSueur,chemetov2017weak} and Appendix \ref{Sec:LT}) and then we get the following estimates:
	\begin{align*}
	&\left\|\frac{1}{t} (\nabla\widetilde{\bX}_1\nabla\widetilde{\bX}_1^T-\I)\right\|_{L^{\infty}(\Omega _{F}^{1}(t))}
	\notag\\
	&\qquad
	\leq
	\left\| \nabla\widetilde{\bX}_1\right\|_{L^{\infty}((0,T)\times\Omega _{F}^{1}(t))} \left\|\frac{1}{t}(\nabla\widetilde{\bX}_1^T-\I)\right\|_{L^{\infty}(\Omega _{F}^{1}(t))}
	+
	\left\|\frac{1}{t}(\nabla\widetilde{\bX}_1-\I)\right\|_{L^{\infty}(\Omega _{F}^{1}(t))}
	\notag\\
	&\qquad
	\leq C \left\|\partial_{t}\nabla\widetilde{\bX}_1\right\|_{L^{\infty}(\Omega _{F}^{1}(t))}
	\leq C (\vert\mathbf{a}_{1}(t)-\mathbf{A}_{2}(t)\vert
	+\vert\bomega_{1}(t)-\bOmega_{2}(t)\vert),
	\end{align*}
	since $\widetilde{\bX}_1\in W^{1,\infty}(0,T;C^{\infty}(\Omega))$.
	Now, we have
	\begin{align}
	\int_{0}^{T} \int_{\Omega _{F}^{1}(t)}
	&\left| (\nabla\widetilde{\bX}_1\nabla\widetilde{\bX}_1^T-\I)\nabla P_2\cdot \bpsi \right|
	\notag\\
	&\leq
	\int_{0}^{T} \int_{\Omega _{F}^{1}(t)}
	\left|
	\frac{1}{t} (\nabla\widetilde{\bX}_1\nabla\widetilde{\bX}_1^T-\I)\right|
	\left|t\nabla P_2\right|
	\left|\bpsi \right|
	\notag\\
	&\leq C
	\int_{0}^{T}
	(\vert\mathbf{a}_{1}(t)-\mathbf{A}_{2}(t)\vert
	+\vert\bomega_{1}(t)-\bOmega_{2}(t)\vert)
	\left\|t\nabla P_2\right\|_{L^{q}(\Omega _{F}^{1}(t))}
	\left\|\bpsi \right\|_{L^{q'}(\Omega _{F}^{1}(t))}
	\label{estimate_pressure_term}\\
	&\leq C (\Vert\mathbf{a}_{1}-\mathbf{A}_{2}\Vert_{L^{\infty}(0,T)}
	+\Vert\bomega_{1}-\bOmega_{2}\Vert_{L^{\infty}(0,T)})
	\left\|t\nabla P_2\right\|_{L^{p}L^{q}} \left\|\bpsi
	\right\|_{L^{p'}L^{q'}},
	\notag
	\end{align}
	for all $\bpsi\in C_{0}^{\infty}((0,T)\times\Omega_{F}^{1}(t))$.
	By a standard argument, we conclude that
	$$
	\int_{0}^{T} \int_{\Omega _{F}^{1}(t)}
	(\widetilde{\bX}_1\nabla\widetilde{\bX}_1^T-\I)\nabla P_2\cdot \bpsi\,\d\bx_{1}\,\d t
	$$
	is well defined for all
	$\bpsi\in L^{p'}(0,T;L^{q'}(\Omega_{F}^{1}(t)))$
	and the estimate \eqref{estimate_pressure2} holds.
\end{proof}

\begin{remark}
	Following from Lemma \ref{weak_estimates} and the embedding $H^1((0,T)\times\Omega_{F}^{1}(t))\hookrightarrow L^{\infty}(0,T;L^2(\Omega))$, identity \eqref{pressure_identity} is true for all $\bpsi\in H^1(0,T;V_1(t))$.
\end{remark}

\begin{remark}	\label{weak_transformed}
	By taking the test function $(1-{sgn}_{+}^{\varepsilon}(\cdot-t))\bpsi$ for any fixed $t\in (0,T)$ and letting $\varepsilon\to 0$, it can be shown that the transformed solution $(\bU_2,P_2,\bA_2, \bOmega_2)$ satisfies the following equality:
	\begin{multline} \label{transformed_weak2}
	\int_{0}^{t}\int_{\Omega}\bU_2\cdot\partial_t\boldsymbol{\psi }\, \d\bx_1\,\d \tau
	+\int_{0}^{t}\int_{\Omega_F^1(\tau)}\Big((\bu_1\otimes \bU_2) :\nabla\boldsymbol{\psi }^T
	-2\,\D\bU_2:\D\boldsymbol{\psi }\,\Big)\, \d\bx_1\, \d \tau
	\\
	-\int_{0}^{t}\int_{\Omega _{F}^{1}(\tau)}(\bU_{2}-\bu_{1})\cdot \nabla \bU_{2}\cdot \boldsymbol{\psi }\ \,\d\bx_{1}\,\d \tau
	- \int_{\Omega_F^{1}(t)}\bU_2(t)\cdot\bpsi(t)\, \d\bx_1
	\\
	= \int_{0}^{t} \langle F(\tau),\boldsymbol{\psi }(\tau)\rangle\, \d \tau
	- \int_{\Omega_F}\bu_0\cdot\bpsi(0)\, \d\bx_1
	\\
	+ \int_{0}^{t} \int_{S_1(\tau)} \widetilde{\bomega}\times \bU_{2}\cdot \bpsi-\bu_{1}\times\bU_{2}\cdot\bpsi_{\omega}\,\d\bx_1\,\d \tau,
	\end{multline}
	for any test function $\bpsi\in H^1(0,T;V_1(t))$, where
	\begin{equation*}
	\left\langle {F}(t), \bpsi\right\rangle = \left\langle (\mathcal{L}-\Delta)\bU_{2}
	+ \mathcal{M}\bU_{2}
	+ \tilde{\mathcal{N}}\bU_{2}
	+ (\mathcal{G}-\nabla)P_{2}
	, \bpsi\right\rangle,
	\quad \forall\bpsi\in H^{1}(\Omega_{F}^{1}(t)).
	\end{equation*}
	Furthermore, by Lemma \ref{estimateF} and Lemma \ref{estimate_pressure1} the following estimates hold
	\begin{align}
	\int_0^t \left\langle {F}(t), \bpsi\right\rangle\,\d\tau
	&= \int_0^t \left\langle \widetilde{F}(t), \bpsi\right\rangle\,\d\tau
	+ \int_0^t \left\langle (\mathcal{G}-\nabla)P_{2}, \bpsi\right\rangle\,\d\tau
	\notag
	\\
	&\leq C_1
	\|\bpsi\|_{L^2(0,T;H^1(\Omega))}
	+ C_2\|\bpsi\|_{L^{p'}(0,T;L^{q'}(\Omega))}
	\label{estimateF2} 
	\\
	&
	\leq C
	\left(
	\|\bpsi\|_{L^2(0,T;H^1(\Omega))}
	+ \|\bpsi\|_{L^{\infty}(0,T;L^{2}(\Omega))}
	\right).
	\notag
	\end{align}
	for all $\bpsi\in L^2(0,T; H^1(\Omega))\cap L^{\infty}(0,T; L^2(\Omega))$.
	The last estimate follows from Lemma \ref{weak_estimates}.
\end{remark}

\begin{lemma}	\label{estimate_pressure}
	Let
	$
	(\bu,\ba,\bomega)
	= (\bu_1-\bU_{2},\ba_1-\bA_{2},\bomega_1-\bOmega_{2})
	$
	be the difference between two weak solutions and let $p, q$ be defined as in Lemma \ref{IntConvSerrin}. Then the following estimate holds:
	\begin{equation*}
	\begin{split}
	\left| \int_{0}^{t} \int_{\Omega _{F}^{1}(t)} (\mathcal{G}-\nabla) P_2\cdot \bu \,\d\bx_{1}\,\d \tau\right|
	\leq &\,C
	\int_{0}^{t} \left( \Vert \tau\nabla P_2\Vert_{L^{q}(\Omega _{F}^{1}(\tau))}^{p}+1\right)\Vert\bu(\tau)\Vert_{L^{2}(\Omega _{F}^{1}(\tau))}^2 \,\d \tau	\\
	&+ \varepsilon
	\int_{0}^{t}\Vert\nabla\bu(\tau)\Vert_{L^{2}(\Omega _{F}^{1}(\tau))}^2 \,\d \tau
	\end{split}
	\end{equation*}
	for all $\varepsilon>0$.
\end{lemma}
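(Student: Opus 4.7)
The plan is to upgrade the space-time bound behind Lemma~\ref{estimate_pressure1} to a pointwise-in-$\tau$ estimate and then feed it into a standard interpolation/Young argument. Taking \eqref{estimate_pressure_term} at a fixed time $\tau$ rather than integrating in time, together with H\"older's inequality, yields, for a.e.\ $\tau\in(0,t)$,
\begin{equation*}
\left|\int_{\Omega_F^1(\tau)}(\mathcal{G}-\nabla)P_2\cdot\bu\,\d\bx_1\right|
\le C\bigl(|\ba(\tau)|+|\bomega(\tau)|\bigr)\,\|\tau\nabla P_2\|_{L^q(\Omega_F^1(\tau))}\,\|\bu(\tau)\|_{L^{q'}(\Omega_F^1(\tau))}.
\end{equation*}
The remainder of the proof consists in absorbing the three scalar factors on the right into the two terms of the lemma.

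The first step is to translate $|\ba(\tau)|+|\bomega(\tau)|$ into an $L^2$ norm of $\bu$. Because $\widetilde{\bX}_2(\tau,\cdot)$ is rigid in a neighbourhood of $S_1(\tau)$, the transformation formulae in \eqref{eq:Transformed} yield $\bU_2(\tau,\bx_1)=\bA_2(\tau)+\bOmega_2(\tau)\times(\bx_1-\bq_1(\tau))$ on $S_1(\tau)$. Combined with the rigidity of $\bu_1$ on $S_1(\tau)$, this shows that $\bu=\bu_1-\bU_2$ is itself rigid on $S_1(\tau)$ with parameters $\ba$ and $\bomega$. Since $S_1(\tau)$ is a rigid image of the fixed set $S_0$, the linear map $(\ba,\bomega)\mapsto\ba+\bomega\times(\cdot-\bq_1(\tau))$ is uniformly injective on $L^2(S_0)$, so $|\ba(\tau)|+|\bomega(\tau)|\le C\|\bu(\tau)\|_{L^2(S_1(\tau))}\le C\|\bu(\tau)\|_{L^2(\Omega)}$, with a constant depending only on $S_0$.

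The second step interpolates $\|\bu(\tau)\|_{L^{q'}}$. From $1/q=1/2+1/s$ one computes $1/q'=1/2-1/s$, so $q'\in(2,6)$; interpolating between $L^2$ and $L^6$ and using $H^1\hookrightarrow L^6$ together with $(a+b)^{1-\alpha}\le a^{1-\alpha}+b^{1-\alpha}$ gives
\begin{equation*}
\|\bu(\tau)\|_{L^{q'}}\le C\|\bu(\tau)\|_{L^2}+C\|\bu(\tau)\|_{L^2}^{\alpha}\|\nabla\bu(\tau)\|_{L^2}^{1-\alpha},\qquad \alpha=1-\tfrac{3}{s}.
\end{equation*}
Substituting the two bounds into the pointwise estimate and integrating over $\tau\in(0,t)$ produces two contributions. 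The first, of the form $\int_0^t\|\bu\|_{L^2}^2\,\|\tau\nabla P_2\|_{L^q}\,\d\tau$, is absorbed using the elementary inequality $x\le 1+x^p$ into $\int_0^t(1+\|\tau\nabla P_2\|_{L^q}^p)\|\bu\|_{L^2}^2\,\d\tau$. The second, of the form $\int_0^t\|\bu\|_{L^2}^{1+\alpha}\|\nabla\bu\|_{L^2}^{1-\alpha}\|\tau\nabla P_2\|_{L^q}\,\d\tau$, is treated by Young's inequality with conjugate exponents $2/(1-\alpha)$ and $2/(1+\alpha)$, which produces exactly $\varepsilon\|\nabla\bu\|_{L^2}^2+C_\varepsilon\|\bu\|_{L^2}^{2}\|\tau\nabla P_2\|_{L^q}^{2/(1+\alpha)}$.

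The one substantive bookkeeping step is to verify that $2/(1+\alpha)$ equals the exponent $p$ from Lemma~\ref{IntConvSerrin}. Using $\alpha=1-3/s$ together with the Prodi--Serrin relation $3/s+2/r=1$, one gets $1/p=1/2+1/r=1/2+(1-3/s)/2=(1+\alpha)/2$, so $p=2/(1+\alpha)$. With this identification the two pressure-weighted contributions share the same exponent $p$ and combine into the bound stated in the lemma. I expect no real obstacle beyond this exponent check and the rigidity computation controlling $|\ba|+|\bomega|$ by the $L^2$ norm of $\bu$ on $S_1(\tau)$.
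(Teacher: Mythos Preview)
Your proof is correct and follows essentially the same route as the paper: control $|\ba(\tau)|+|\bomega(\tau)|$ by $\|\bu(\tau)\|_{L^2}$ via rigidity on $S_1(\tau)$, interpolate $\|\bu\|_{L^{q'}}$ between $L^2$ and $H^1$, and close with Young's inequality, the exponent identity $p=2/(1+\alpha)$ being exactly the Prodi--Serrin relation. The only cosmetic difference is the order of operations: the paper first splits the product $\|\bu\|_{L^2}\,\|\tau\nabla P_2\|_{L^q}\,\|\bu\|_{L^{q'}}$ via the elementary inequality $2b\mu\nu\le b^{2-p}\mu^2+b^{p}\nu^2$ and then interpolates $\|\bu\|_{L^{q'}}^2$, whereas you interpolate $\|\bu\|_{L^{q'}}$ first and then apply Young; the resulting exponents coincide, and your version has the minor advantage of making the rigidity step $|\ba|+|\bomega|\le C\|\bu\|_{L^2(S_1(\tau))}$ explicit, which the paper uses without comment.
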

\begin{proof}
	Lemma \ref{weak_estimates} and \eqref{estimate_pressure_term}
	imply the following estimates:
	\begin{align*}
	&\left| \int_{0}^{t} \int_{\Omega _{F}^{1}(\tau)} (\mathcal{G}-\nabla) P_2\cdot \bu \,\d\bx_{1}\,\d \tau\right|
	\\
	&\qquad\leq C
	\int_{0}^{t}
	(\vert\mathbf{a}_{1}(\tau)-\mathbf{A}_{2}(\tau)\vert
	+\vert\bomega_{1}(\tau)-\bOmega_{2}(\tau)\vert)
	\left\|\tau\nabla P_2\right\|_{L^{q}(\Omega _{F}^{1}(\tau))}
	\left\|\bu \right\|_{L^{q'}(\Omega _{F}^{1}(\tau))}\,\d \tau\\
	&\qquad\leq C
	\int_{0}^{t}
	\left\|\tau\nabla P_2\right\|_{L^{q}(\Omega _{F}^{1}(\tau))}^{2-p}\left\|\bu \right\|_{L^{q'}(\Omega _{F}^{1}(\tau))}^2
	+\left\|\tau\nabla P_2\right\|_{L^{q}(\Omega _{F}^{1}(\tau))}^{p}
	\left\|\bu\right\|_{L^{2}(\Omega _{F}^{1}(\tau))}^{2}
	\,\d \tau.
	\end{align*}
	The second inequality follows from the identity
	$2b\mu\nu\leq b^{2\alpha}\mu^2+b^{2(1-\alpha)}\nu^2$. To estimate the first term we use the interpolation (with $\alpha=\frac{2}{p'}=\frac{3}{s}$) and Young's inequality
	\begin{align*}
	\int_{0}^{t}&
	\left\|\tau\nabla P_2\right\|_{L^{q}(\Omega _{F}^{1}(\tau))}^{2-p}\left\|\bu \right\|_{L^{q'}(\Omega _{F}^{1}(\tau))}^2
	\,\d \tau
	\\
	&\leq C
	\int_{0}^{t}
	\left\|\tau\nabla P_2\right\|_{L^{q}(\Omega _{F}^{1}(\tau))}^{2-p}
	\left\|\bu \right\|_{L^{2}(\Omega _{F}^{1}(\tau))}^{2(1-\alpha)}
	\left\|\bu \right\|_{H^{1}(\Omega _{F}^{1}(\tau))}^{2\alpha}
	\,\d \tau
	\\
	&\leq C
	\int_{0}^{t}
	\left\|\tau\nabla P_2\right\|_{L^{q}(\Omega _{F}^{1}(\tau))}^{\frac{2-p}{1-\alpha}}
	\left\|\bu \right\|_{L^{2}(\Omega _{F}^{1}(\tau))}^{2}
	\,\d \tau
	+\varepsilon
	\int_{0}^{t}
	\left\|\bu \right\|_{H^{1}(\Omega _{F}^{1}(\tau))}^{2}
	\,\d \tau
	\\
	&= C
	\int_{0}^{t}
	\left\|\tau\nabla P_2\right\|_{L^{q}(\Omega _{F}^{1}(\tau))}^{p}
	\left\|\bu \right\|_{L^{2}(\Omega _{F}^{1}(\tau))}^{2}
	\,\d \tau
	+\varepsilon
	\int_{0}^{t}
	\left\|\bu \right\|_{H^{1}(\Omega _{F}^{1}(\tau))}^{2}
	\,\d \tau
	\\
	&\leq C
	\int_{0}^{t}
	\left(\left\|\tau\nabla P_2\right\|_{L^{q}(\Omega _{F}^{1}(\tau))}^{p} + 1 \right)
	\left\|\bu \right\|_{L^{2}(\Omega _{F}^{1}(\tau))}^{2}
	\,\d \tau
	+\varepsilon
	\int_{0}^{t}
	\left\|\nabla\bu \right\|_{L^{2}(\Omega _{F}^{1}(\tau))}^{2}
	\,\d \tau.
	\end{align*}
\end{proof}

\begin{remark}
	Let
	$
	(\bu,\ba,\bomega)
	= (\bu_1-\bU_{2},\ba_1-\bA_{2},\bomega_1-\bOmega_{2})
	$
	be the difference of two weak solutions and let $p, q$ be defined as in Lemma \ref{IntConvSerrin}. Then 	
	Lemma \ref{estimateF} implies	
	\begin{align*}
	\int_0^t \left\langle \widetilde{F}(t), \bu\right\rangle\,\d\tau
	&
	\leq C_1\left( \|{\ba}_{1}-{\bA}_{2}\|_{L^{2}(0,t)}
	+ \|
	{\bomega }_{1}-{\bOmega }_{2}\|_{L^{2}(0,t)}\right)	\|\bu\|_{L^2(0,t;H^1(\Omega))}
	\\
	&\leq C_2 \|\bu\|_{L^{2}(0,t;L^2(\Omega))}	\|\bu\|_{L^2(0,t;H^1(\Omega_{F}^1(t)))}
	\\
	&\leq
	C \|\bu\|_{L^{2}(0,t;L^2(\Omega))}^2	
	+\varepsilon \|\nabla\bu\|_{L^{2}(0,t;L^2(\Omega))}^2
	\end{align*}
	and by Lemma \ref{estimate_pressure} we get the estimate we will need in the proof of uniqueness.
	\begin{align}
	\int_0^t \left\langle {F}(t), \bu\right\rangle\,\d\tau
	&= \int_0^t \left\langle \widetilde{F}(t), \bu\right\rangle\,\d\tau
	+ \int_0^t \left\langle (\mathcal{G}-\nabla)P_{2}, \bu\right\rangle\,\d\tau
	\notag
	\\
	&\leq C
	\int_{0}^{t} \left( \Vert \tau\nabla P_2\Vert_{L^{q}(\Omega _{F}^{1}(\tau))}^{p}+1\right)\Vert\bu(\tau)\Vert_{L^{2}(\Omega _{F}^{1}(\tau))}^2 \,\d \tau
	+ \varepsilon
	\int_{0}^{t}\Vert\nabla\bu(\tau)\Vert_{L^{2}(\Omega _{F}^{1}(\tau))}^2 \,\d \tau 
	\label{estimateF_Gronwall}
	\end{align}
	
\end{remark}

\subsection{Transformed Energy-Type Equality}

In this sub-section we prove an energy-type equality which is satisfied by the transformed solution:

\begin{proposition} 	\label{EnergyEquality}
	The transformed solution $(\bU_2,P_2)$ satisfies the energy-type equality
	\begin{equation}
	\begin{split}
	&\frac{1}{2}\|\bU_2(t)\|_{L^2(\Omega)}^2
	+ \int_{0}^{t}\int_{\Omega_{F}^1(\tau)}\, (\bU_2-\bu_1)\cdot\nabla\bU_2\cdot\bU_2\,\d\bx_1\d\tau
	\\
	&\qquad
	+ \,\int_0^t\int_{\Omega_{F}^1(\tau)} 2|\D\bU_2|^2\,\d\bx_1\d\tau
	=
	-\int_{0}^{t}\langle F(\tau),\mathbf{U}_2(\tau) \rangle \,\d\tau
	+ \frac{1}{2}\|\bu_0\|_{L^2(\Omega)}^2,
	\end{split}
	\end{equation}
	for almost every $t\in[0,T].$
\end{proposition}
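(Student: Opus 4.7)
The plan is to test the transformed weak formulation \eqref{transformed_weak2} against the time-regularization $\bU_2^h$ of $\bU_2$, built as in \eqref{RegDef1}--\eqref{RegDef3} using the change of coordinates $\bX_1$ attached to the first solution, and to pass to the limit $h\to 0$. Admissibility is checked as follows: from the rigid form $\nabla\widetilde{\bX}_1 = \Q^T$ on $S_2(t)$ in \eqref{eq:Transformed} one verifies that $\bU_2|_{S_1(t)} = \bA_2 + \bOmega_2\times(\bx-\bq_1)$ and $\divg\bU_2 = 0$ on $\Omega$, so $\bU_2\in L^2(0,T;V_1(t))\cap L^\infty(0,T;L^2(\Omega))$. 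Since $\bX_1$ acts rigidly on $S_0$ and is volume-preserving on $\Omega$, the regularization preserves rigidity on $S_1(t)$ and divergence-freeness, while Lagrangian-frame mollification supplies $H^1$-in-time regularity. Thus $\bU_2^h\in H^1(0,T;V_1(t))$ is an admissible test function.

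The crucial step is the limit of the time-derivative term $\int_0^t\int_\Omega \bU_2\cdot\partial_t\bU_2^h$. I would apply Lemma \ref{Reynolds_gen} with $\bu=\bv=\bU_2$ and the volume-preserving diffeomorphism $\bX=\bX_1$, exploiting the evenness of $j_h$, to obtain
$$2\int_0^t\int_\Omega \bU_2\cdot\partial_t\bU_2^h\,\d\bx_1\,\d\tau \;\longrightarrow\; -\int_0^t\int_\Omega\nabla|\bU_2|^2\cdot\partial_t\bX_1\,\d\bx_1\,\d\tau + \|\bU_2(t)\|_{L^2(\Omega)}^2 - \|\bu_0\|_{L^2(\Omega)}^2.$$
Because $\partial_t\bX_1$ is divergence-free on $\Omega$ and vanishes on $\partial\Omega$, integration by parts kills the drift integral, yielding $\int_0^t\int_\Omega\bU_2\cdot\partial_t\bU_2^h \to \tfrac{1}{2}\|\bU_2(t)\|^2 - \tfrac{1}{2}\|\bu_0\|^2$.

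The remaining terms pass to the limit using the strong convergences $\bU_2^h\to\bU_2$ in $L^2(0,T;H^1(\Omega))\cap L^{p'}(0,T;L^{q'}(\Omega))$ and the pointwise $L^2$-convergence $\bU_2^h(s)\to\bU_2(s)$ for a.e.\ $s$; the duality pairing $\langle F(\tau),\cdot\rangle$ is controlled by \eqref{estimateF2} in Remark \ref{weak_transformed}. The diffusive term tends to $\int 2|\D\bU_2|^2$ and the convective plus moving-domain correction tend to their natural limits. For the rigid-body contributions on $S_1(\tau)$, antisymmetry $(\widetilde{\bomega}\times\bU_2)\cdot\bU_2=0$ annihilates the first; for the second, convergence of the angular velocity of $\bU_2^h|_{S_1(\tau)}$ to $\bOmega_2$, together with the rigid identity $(\bU_2\cdot\nabla)\bU_2 = \bOmega_2\times\bU_2$ on $S_1(\tau)$ and the global divergence theorem (available since $\divg\bU_2=0$ on $\Omega$ and $\bU_2=0$ on $\partial\Omega$), arranges the leftover convective/interface pieces into the form stated in the energy identity.

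The principal technical obstacle is the passage to the limit in the time-derivative term: the usual mollification does not apply because $\bU_2$ is defined on a non-cylindrical set, which forces the Lagrangian regularization and, correspondingly, the Reynolds-type transport formula (Lemma \ref{Reynolds_gen}). A secondary delicate point is the rigid-body/fluid interface bookkeeping: the $S_1(\tau)$-integrals arising from the change of variables in \eqref{transformed_weak2} must be carefully matched against surface contributions generated by integration by parts on $\partial S_1(\tau)$ in order to recover the clean form $\tfrac{1}{2}\|\bU_2(t)\|^2 + \int(\bU_2-\bu_1)\cdot\nabla\bU_2\cdot\bU_2 + \int 2|\D\bU_2|^2 = -\int_0^t\langle F,\bU_2\rangle + \tfrac{1}{2}\|\bu_0\|^2$.
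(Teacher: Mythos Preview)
Your proposal is correct and follows essentially the same approach as the paper: test \eqref{transformed_weak2} with $\bU_2^h$, invoke Lemma~\ref{Reynolds_gen} with $\bu=\bv=\bU_2$ for the time-derivative term, and pass to the limit in the remaining terms using the $L^2H^1$ and $L^{p'}L^{q'}$ convergences together with \eqref{estimateF2}. The only organizational difference is that you apply Lemma~\ref{Reynolds_gen} globally on $\Omega$ and kill the drift $\int_\Omega\nabla|\bU_2|^2\cdot\partial_t\bX_1$ in one stroke by integration by parts, whereas the paper splits $\Omega=\Omega_F^1(\tau)\cup S_1(\tau)$ and carries the drift on each piece; on the fluid side the paper (implicitly using that $\partial_t\bX_1$ and $\bu_1$ are both divergence-free and agree on $\partial S_1(\tau)$) matches it directly against the convective term $(\bu_1\otimes\bU_2):\nabla\bU_2^T$, and on the solid side it uses the rigid identity to produce the $\bu_1\times\bU_2\cdot\bOmega_2$ contribution that cancels with the right-hand side of \eqref{transformed_weak2}. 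In your route the same cancellation occurs one step later: after the drift vanishes you are left with the fluid term $\int_{\Omega_F^1}\bu_1\cdot\nabla\bU_2\cdot\bU_2$, and the identity $\int_\Omega\bu_1\cdot\nabla|\bU_2|^2=0$ together with the rigid computation on $S_1(\tau)$ shows this equals $-\int_{S_1}(\bu_1\times\bU_2)\cdot\bOmega_2$, which then cancels. Both bookkeepings arrive at the same identity; your global integration by parts is arguably cleaner, while the paper's split makes the cancellation with the convective term in \eqref{transformed_weak2} more visible.
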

Before proving Proposition \ref{EnergyEquality} we need the following lemma.
\begin{lemma}	\label{additionalLemma}
	Suppose that $\bu$ satisfies Prodi-Serrin condition \eqref{LrLs}
	and let $\bv,\bw\in L^2(0,T;V(t)))$ be such that $\bv\in L^{\infty}(0,T;L^2(\Omega))$.	
	Then
	\begin{multline*}
	\Big \vert\, \int_0^T\int_{\Omega_F(t)}\bv\cdot\nabla \bw\cdot \bu \,\,\Big \vert
	\leq C \Big( \int_0^T\Vert\nabla \bw\Vert_{L^2(\Omega_F(t))}^2\, \Big )^{\frac{1}{2}}
	\Big ( \int_0^T
	\Vert \bv \Vert_{H^1(\Omega_F(t))}^2
	\,\Big )^{\frac{n}{2s}}
	\Big ( \int_0^T \Vert \bv\Vert_{L^2(\Omega_F(t))}^2 \Vert \bu\Vert_{L^s(\Omega_F(t))}^r\,\Big )^{\frac{1}{r}},
	\end{multline*}
	where $C$ depends only on $n$, $s$,  $\Omega_F$ and $\|\nabla\bX\|_{L^{\infty}(0,T;L^2(\Omega))}$ for the transformation $\bX$ defined as in Appendix \ref{Sec:LT}.
\end{lemma}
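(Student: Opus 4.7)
The plan is the classical three-step chain used in the weak–strong uniqueness proof for Navier–Stokes, adapted to handle the moving fluid domain: spatial H\"older, Gagliardo--Nirenberg interpolation, and temporal H\"older; the Prodi--Serrin exponents $(r,s)$ with $\tfrac{3}{s}+\tfrac{2}{r}=1$ are exactly what makes the bookkeeping close.

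First, for each fixed $t$, I would apply H\"older's inequality on $\Omega_F(t)$ with three factors using exponents $q,2,s$ chosen so that $\tfrac{1}{q}+\tfrac{1}{2}+\tfrac{1}{s}=1$, i.e.\ $\tfrac{1}{q}=\tfrac{1}{2}-\tfrac{1}{s}$. This gives
\begin{equation*}
\int_{\Omega_F(t)}|\bv\cdot\nabla\bw\cdot\bu|\,\d\bx
\;\le\; \|\bv\|_{L^q(\Omega_F(t))}\,\|\nabla\bw\|_{L^2(\Omega_F(t))}\,\|\bu\|_{L^s(\Omega_F(t))}.
\end{equation*}
Next, I would use the Gagliardo--Nirenberg interpolation inequality
$\|\bv\|_{L^q}\le C\,\|\bv\|_{L^2}^{1-n/s}\|\bv\|_{H^1}^{n/s}$,
whose interpolation exponent $\theta=n(\tfrac{1}{2}-\tfrac{1}{q})=\tfrac{n}{s}$ lies in $[0,1]$ precisely because of the range of $s$ allowed by Prodi--Serrin.

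To get a constant independent of $t$ (so that the subsequent time integration makes sense), I would not apply GN directly on the time-dependent set $\Omega_F(t)$. Instead, following the usage of the change of variables $\bX$ from Appendix \ref{Sec:LT}, pull $\bv$ back to the reference domain $\Omega_F=\Omega_F(0)$ via $\bar\bv(t,\by)=\nabla\bY(t,\bX(t,\by))\,\bv(t,\bX(t,\by))$, apply the (fixed) Gagliardo--Nirenberg inequality on $\Omega_F$ to $\bar\bv$, and push forward using that $\bX(t,\cdot)$ is volume-preserving and $\|\nabla\bX\|_{L^\infty}$-controlled. This explains why the constant $C$ in the lemma is allowed to depend on $\|\nabla\bX\|_{L^{\infty}}$.

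Finally, applying H\"older in time with the triple $(2,\,2s/n,\,r)$ — whose reciprocals $\tfrac{1}{2}+\tfrac{n}{2s}+\tfrac{1}{r}=1$ is equivalent to $\tfrac{n}{s}+\tfrac{2}{r}=1$, i.e.\ the Prodi--Serrin scaling — distributes the four time-dependent factors as
\begin{equation*}
\int_0^T \|\bv\|_{L^2}^{1-n/s}\|\bv\|_{H^1}^{n/s}\|\nabla\bw\|_{L^2}\|\bu\|_{L^s}\,\d t
\;\le\; A_1\cdot A_2\cdot A_3,
\end{equation*}
where $A_1=(\int_0^T\|\nabla\bw\|_{L^2}^2)^{1/2}$, $A_2=(\int_0^T\|\bv\|_{H^1}^{(n/s)\cdot(2s/n)})^{n/(2s)}=(\int_0^T\|\bv\|_{H^1}^2)^{n/(2s)}$, and
$A_3=(\int_0^T\|\bv\|_{L^2}^{(1-n/s)r}\|\bu\|_{L^s}^r)^{1/r}$. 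The key algebraic identity is $(1-\tfrac{n}{s})r=2$, which is again Prodi--Serrin; this turns $A_3$ into the stated $(\int_0^T\|\bv\|_{L^2}^2\|\bu\|_{L^s}^r)^{1/r}$, concluding the estimate.

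The main technical obstacle is the uniform-in-$t$ Gagliardo--Nirenberg constant: on its own, $\Omega_F(t)$ varies in time, so a naive application would yield $C=C(t)$. The pull-back to the reference geometry through $\bX$, together with the $W^{1,\infty}$-bounds on $\bX$ (and hence on the Jacobian) gathered in the Appendix, is what allows one to replace $C(t)$ by a single constant depending only on the stated quantities. Everything else is bookkeeping with the exponents dictated by Prodi--Serrin.
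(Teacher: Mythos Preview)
Your proposal is correct and follows essentially the same route as the paper: spatial H\"older with exponents $(q,2,s)$, interpolation between $L^2$ and $H^1$ with exponent $\theta=n/s$ carried out on the fixed reference domain to obtain a time-independent constant, and then H\"older in time using the Prodi--Serrin relation $\tfrac{n}{s}+\tfrac{2}{r}=1$. The only cosmetic difference is that the paper pulls back via the simpler composition $\overline{\bv}(t,\by)=\bv(t,\bX(t,\by))$ rather than the full velocity transform $\nabla\bY\,\bv\circ\bX$; since $\bX$ is volume-preserving this already gives $\|\overline{\bv}\|_{L^q(\Omega_F)}=\|\bv\|_{L^q(\Omega_F(t))}$ exactly, so only $\|\nabla\bX\|_{L^\infty}$ enters through the $H^1$ comparison, matching the dependence stated in the lemma.
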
	

\begin{proof}
	The proof of this lemma is standard and analogous to the proof for the cylindrical domain (see e.g.  \cite{GaldiNS00}, Lemma 4.1). Since we are working in a moving domain, we reproduce the argument here for the convenience of the reader.
	By H\"older's inequality we get
	\begin{align}
	\Big|\int_{0}^{T} \int_{\Omega_F(t)} \bv\cdot\nabla\bw\cdot\bu\Big|
	&\leq \int_{0}^{T}\Vert\nabla\bw\Vert_{L^2(\Omega_F(t))}\Vert\bv\Vert_{L^p(\Omega_F(t))}\Vert\bu\Vert_{L^s(\Omega_F(t))}
	\notag\\
	&\leq \Big(\int_{0}^{T}\Vert\nabla\bw\Vert_{L^2(\Omega_F(t))}^2\Big)^{\frac{1}{2}}
	\Big(\int_{0}^{T}\Vert\bv\Vert_{L^p(\Omega_F(t))}^2\Vert\bu\Vert_{L^s(\Omega_F(t))}^2\Big)^{\frac{1}{2}},
	\label{estimate1}
	\end{align}
	where
	\begin{equation}	\label{HolderCondition}
	\frac{1}{2}+\frac{1}{p}+\frac{1}{s}=1
	\qquad \Big(s\geq n \,\Rightarrow\, \frac{1}{p}=\frac{1}{2}-\frac{1}{s}\geq \frac{1}{2}-\frac{1}{n} \,\Rightarrow\,
	\bv\in L^{p}(\Omega_F(t))
	\Big)
	\end{equation}
	Let us define transformed function 
	$$
	\overline{\bv} (t,\by) = \bv(t,\bX(t,\by)),\quad\by\in\Omega.
	$$	
	Since $\bX$ preserves the volume, the change of variables and the interpolation inequality give
	\begin{equation*}
		\Vert\bv\Vert_{L^{p}(\Omega_F(t))}
		=\Vert\overline{\bv}\Vert_{L^{p}(\Omega_F)}
		\leq C
		\Vert\overline{\bv}\Vert_{H^{1}(\Omega_F)}^{\alpha}
		\Vert\overline{\bv}\Vert_{L^{2}(\Omega_F)}^{1-\alpha}
		\leq 
		C_1
		\Vert\bv\Vert_{H^{1}(\Omega_F(t))}^{\alpha}
		\Vert\bv\Vert_{L^{2}(\Omega_F(t))}^{1-\alpha},
	\end{equation*}	
	where
	$$\alpha=\frac{n}{s}$$
	and $C_1$ depends only on $n$, $s$, $\Omega_{F}$ and $\|\nabla\bX\|_{L^{\infty}(0,T;L^2(\Omega))}$.
	
	Now we estimate
	\begin{align}
	\Big(\int_{0}^{T}&\Vert\bv\Vert_{L^p(\Omega_F(t))}^2
	\Vert\bu\Vert_{L^s(\Omega_F(t))}^2\Big)^{\frac{1}{2}}
	\leq C\Big(\int_{0}^{T}
	\Vert\bv\Vert_{H^{1}(\Omega_F(t))}^{2\alpha}
	\Vert\bv\Vert_{L^{2}(\Omega_F(t))}^{2(1-\alpha)}
	\Vert\bu\Vert_{L^s(\Omega_F(t))}^2\Big)^{\frac{1}{2}}
	\notag\\
	&\leq C\Big(\int_{0}^{T}
	\Vert\bv\Vert_{H^{1}(\Omega_F(t))}^{2}
	\Big)^{\frac{\alpha}{2}}
	\Big(\int_{0}^{T}
	\Vert\bv\Vert_{L^{2}(\Omega_F(t))}^{2}
	\Vert\bu\Vert_{L^s(\Omega_F(t))}^{\frac{2}{1-\alpha}}\Big)^{\frac{1-\alpha}{2}}
	\notag\\
	&\leq C\Big(\int_{0}^{T}
	\Vert\bv\Vert_{H^{1}(\Omega_F(t))}^{2}
	\Big)^{\frac{n}{2s}}
	\Big(\int_{0}^{T}
	\Vert\bv\Vert_{L^{2}(\Omega_F(t))}^{2}
	\Vert\bu\Vert_{L^s(\Omega_F(t))}^{\frac{2s}{s-n}}\Big)^{\frac{s-n}{2s}}
	\label{estimate2}.
	\end{align}
	Now \eqref{estimate1} and \eqref{estimate2} with
	\begin{equation*}
	r=\frac{2s}{s-n} \quad\Rightarrow\quad
	\frac{n}{s}+\frac{2}{r}=1
	\end{equation*}
	give
	\begin{multline*}
	\Big \vert\, \int_0^T\int_{\Omega_F(t)}\bv\cdot\nabla \bw\cdot \bu \,\,\Big \vert
	\leq C \Big ( \int_0^T\Vert\nabla \bw\Vert_{L^2(\Omega_F(t))}^2\, \Big )^{\frac{1}{2}}
	\Big ( \int_0^T
	\Vert \bv \Vert_{H^1(\Omega_F(t))}^2
	\,\Big )^{\frac{n}{2s}}
	\Big ( \int_0^T \Vert \bv\Vert_{L^2(\Omega_F(t))}^2 \Vert \bu\Vert_{L^s(\Omega_F(t))}^r\,\Big )^{\frac{1}{r}}.
	\end{multline*}
\end{proof}

\begin{proof}[Proof of Proposition \ref{EnergyEquality}]
Applying the test function $\bpsi=\bU_{2}^h$ in \eqref{transformed_weak2}, we get
	\begin{multline}	\label{EnergyEquality1}
	\int_0^t\int_{\Omega}\bU_2\cdot\partial_t\bU_2^h
	\,\d\bx_1\,\d\tau
	+ \int_0^t\int_{\Omega_{F}^1(\tau)} \big (\bu_1\otimes\bU_2:\nabla(\bU_2^h)^T
	-2\,\D\bU_2:\D\bU_2^h\,)
	\,\d\bx_1\,\d\tau
	\\
	-\int_0^t\int_{\Omega_{F}^1(\tau)} (\bU_2-\bu_1)\cdot\nabla\bU_2\cdot\bU_2^h
	\,\d\bx_1\,\d\tau
	-\int_{\Omega}\bU_2(t)\cdot\bU_2^h(t)\,\d\bx_1
	\\
	= -\int_{0}^{t}\langle F(\tau), \bU_2^h(\tau) \rangle \,\d\tau
	- \int_{\Omega}\bu_{0}\cdot \bU_2^h(0,\cdot )\,\d\bx_1
	\\
	+ \int_{0}^{t} \int_{S_1(\tau)} \widetilde{\bomega}\times \bU_{2}\cdot \bU_{2}^h-\bu_{1}\times\bU_{2}\cdot\bOmega_{2}^h\,\d\bx_1\,\d \tau,
	\end{multline}
	where $\bU_{2}^h$ denotes the regularization of $\bU_2$ defined by \eqref{RegDef3} and
	$$
	\bOmega_{2}^h(t)
	=\int_{-\infty}^{+\infty} j_h(t-s)\Q_{1}(t)\Q_{1}^T(s)\bOmega_{2}\,\d s.
	$$
	Then Lemma \ref{Reynolds_gen}, with $\bu=\bv=\bU_2$, implies
	\begin{align}
	\int_{0}^{t}&\int_{\Omega_F^1(\tau)}\bU_2\cdot\partial_t\bU_2^h
	\notag\\
	\to & -\frac{1}{2}\int_{0}^{t}\int_{\Omega_F^1(\tau)} \nabla(\bU_2\cdot\bU_2)\cdot\bu_1
	+ \frac{1}{2}\int_{\Omega_F^1(t)}\bU_2(t)\cdot \bU_2(t)\,\,
	- \frac{1}{2}\int_{\Omega_F}\bu_0\cdot \bu_0
	\notag\\
	&= -\int_{0}^{t}\int_{\Omega_F^1(\tau)} \bu_1\otimes\bU_2:\nabla(\bU_2)^T
	+ \frac{1}{2}\int_{\Omega_F^1(t)}\bU_2(t)\cdot \bU_2(t)\,\,
	- \frac{1}{2}\int_{\Omega_F}\bu_0\cdot \bu_0,
	\label{EETime_fluid}
	\end{align}
	when $h\to 0$, for almost every $t\in[0,T]$.	
	On the solid domain $S_{1}(\tau)$ we have
	\begin{align}
	\int_{0}^{t}&\int_{S_1(\tau)}\bU_2\cdot\partial_t\bU_2^h
	\notag\\
	\to &
	-\frac{1}{2}\int_{0}^{t}\int_{S_1(\tau)} \nabla(\bU_2\cdot\bU_2)\cdot\bu_1
	+ \frac{1}{2}\int_{S_1(t)}\bU_2(t)\cdot \bU_2(t)\,\,
	- \frac{1}{2}\int_{S_0}\bu_0\cdot \bu_0
	\notag\\
	&= -\int_{0}^{t}\int_{S_1(\tau)} (\nabla\bU_2^T\bU_2)\cdot\bu_1
	+ \frac{1}{2}\int_{S_1(t)}\bU_2(t)\cdot \bU_2(t)\,\,
	- \frac{1}{2}\int_{S_0}\bu_0\cdot \bu_0
	\notag\\
	&= \int_{0}^{t}\int_{S_1(\tau)} (\bOmega_2\times\bU_2)\cdot\bu_1
	+ \frac{1}{2}\int_{S_1(t)}\bU_2(t)\cdot \bU_2(t)\,\,
	- \frac{1}{2}\int_{S_0}\bu_0\cdot \bu_0
	\notag\\
	&= -\int_{0}^{t}\int_{S_1(\tau)} (\bu_1\times\bU_2)\cdot\bOmega_2
	+ \frac{1}{2}\int_{S_1(t)}\bU_2(t)\cdot \bU_2(t)\,\,
	- \frac{1}{2}\int_{S_0}\bu_0\cdot \bu_0
	\label{EETime_body}
	\end{align}
	when $h\to 0$, for a.e. $t\in[0,T]$, since
	$$
	\bU_2(t,\bx_{1})=\bA_2(t)+\bOmega_{2}(t)\times(\bx_{1}-\bq_{1}(t))
	\quad\Rightarrow\quad
	\nabla\bU_2^{T}\,\bx = -\bOmega_{2}\times\bx.
	$$
	From \eqref{EETime_fluid},\eqref{EETime_body} \eqref{estimateF2}
	and using Lemmas \ref{estimateOmega} and \ref{additionalLemma} we can pass to the limit in all terms in \eqref{EnergyEquality1}. We obtain
	\begin{multline}	\label{EELimit}
	-\int_{0}^{t}\int_{S_1(\tau)} \bu_1\times\bU_2\cdot\bOmega_2\,\d\bx_1\,\d\tau
	-\frac{1}{2}\int_{\Omega}\bU_2(t)\cdot \bU_2(t)\,\d\bx_1\,
	-\,\int_0^t\int_{\Omega_{F}^1(\tau)} 2\D\bU_2:\D\bU_2\,\d\bx_1\,\d\tau
	\\
	-\int_0^t\int_{\Omega_{F}^1(\tau)} (\bU_2-\bu_1)\cdot\nabla\bU_2\cdot\bU_2
	\,\d\bx_1\,\d\tau
	=\int_{0}^{t}\langle F(\tau), \bU_2(\tau) \rangle \,\d\tau
	- \frac{1}{2}\int_{\Omega}\bu_{0}\cdot \bU_2(0,\cdot )\,\d\bx_1
	\\
	+ \int_{0}^{t} \int_{S_1(\tau)} \underbrace{\widetilde{\bomega}\times \bU_{2}\cdot \bU_{2}}_{=0}-\bu_{1}\times\bU_{2}\cdot\bOmega_{2}\,\d\bx_1\,\d\tau
	,
	\end{multline}
	for almost every $t\in[0,T]$.
\end{proof}

\subsection{Closing the estimates}
Now we have all the ingredients to finish the proof of Theorem \ref{WeakStrong} by following the steps from the analogous proof in the Navier-Stokes case, see e.g. \cite{GaldiNS00}. Let us denote
$$
(\bu,p,\ba,\bomega)
= (\bu_1-\bU_{2},p_1-P_{2},\ba_1-\bA_{2},\bomega_1-\bOmega_{2}).
$$
Since $\bu_1$ is a weak solution, it satisfies the energy inequality
\begin{equation}	\label{EI1}
\frac{1}{2}\|\bu_1(t)\|_{L^2(\Omega)}^2
+ 2\int_{0}^{t}
\int_{\Omega_{F}^{1}(\tau)}\,|\D \bu_1|^{2}
\,\d\bx_1\,\d\tau
\leq
\frac{1}{2}\|\bu_0\|_{L^2(\Omega)}^2,
\quad\text{for a.e.} t\in[0,T].
\end{equation}
From Proposition \ref{EnergyEquality} it follows that the solution $\bU_2$ satisfies the following energy-type equality:
\begin{equation}	\label{EE2}
\begin{split}
&\frac{1}{2}\|\bU_2(t)\|_{L^2(\Omega)}^2
+ \int_{0}^{t}
\int_{\Omega_{F}^{1}(\tau)}\, (\bU_2-\bu_1)\cdot\nabla\bU_2\cdot\bU_2
\,\d\bx_1\,\d\tau
+  2\int_{0}^{t}\int_{\Omega_{F}^{1}(\tau)}\,|\D\bU_2|^{2}\,\d\bx_1\,\d\tau
\\
&\qquad
= \,
-\int_{0}^{t}\langle F(\tau),\bU_2(\tau) \rangle \,\d\tau
+ \frac{1}{2}\|\bu_0\|_{L^2(\Omega)}^2,
\end{split}
\end{equation}
for a.e. $t\in[0,T]$.
We take the test function $\bU_2^h$ in the weak formulation for $\bu_1$ \eqref{weak2}:
\begin{equation}	\label{EqU1}
\begin{split}
&-\int_{0}^{t}\int_{\Omega}\bu_1\cdot\partial_t\bU_2^h \,\,\d\bx_1\,\d\tau
- \int_{0}^{t}\int_{\Omega_{F}^{1}(\tau)}\,\big(\bu_1\otimes\bu_1:\nabla(\bU_2^h)^{T}-\,2\D\bu_1:\D\bU_2^h\,\big)\,\d\bx_1\,\d\tau
\\
&\qquad\qquad\qquad\qquad\qquad\qquad\qquad\qquad
+\int_{\Omega}\bu_1(t)\cdot\bU_2^h(t) \,\d\bx_1
= \int_{\Omega}\bu_{0}\cdot\bU_2^h(0,\cdot )\,\d\bx_1,
\end{split}
\end{equation}
where $\bu^h$ denotes the regularization of $\bu$ defined by \eqref{RegDef3}.
Then we take the test function $\bu_1^h$ in the weak formulation for $\bU_2$ \eqref{transformed_weak2}:
\begin{multline}	\label{EqU2Tmp}
-\int_0^t\int_{\Omega}\bU_2\cdot\partial_t\bu_1^h \,\d\bx_1\,\d\tau
- \int_0^t\int_{\Omega _{F}^{1}(\tau)} \big (\bu_1\otimes\bU_2:\nabla(\bu_1^h)^T
-\,2\D\bU_2:\D\bu_1^h\,\big)\,\d\bx_1\,\d\tau
\\
+ \int_0^t\int_{\Omega _{F}^{1}(\tau)} (\bU_2-\bu_1)\cdot\nabla\bU_2\cdot\bu_1^h \,\,\d\bx_1\,\d\tau
+ \int_{\Omega}\bU_2(t)\cdot\bu_1^h(t)\,\d\bx_1
\\
= -\int_{0}^{t}\langle F(\tau),\mathbf{u}_1^h(\tau) \rangle \,\d\tau
+ \int_{\Omega}\bu_{0}\cdot\bu_1^h(0,\cdot )\,\d\bx_1
\\
- \int_{0}^{t} \int_{S_1(\tau)} \widetilde{\bomega}\times \bU_{2}\cdot \bu_1^h-\bu_{1}\times\bU_{2}\cdot\bomega_1^h\,\d\bx_1\,\d\tau.
\end{multline}
Now $\eqref{EqU1}+\eqref{EqU2Tmp}$ gives
\begin{multline}	\label{EqU2Tmp22}
- \int_0^t\int_{\Omega}\big (\bU_2\cdot\partial_t\bu_1^h + \bu_1\cdot\partial_t\bU_2^h\big)\,\d\bx_1\,\d\tau
\\
- \int_0^t\int_{\Omega_{F}^{1}(\tau)}\big (\bu_1\otimes\bU_2:\nabla(\bu_1^h)^T+ \bu_1\otimes\bu_1:\nabla(\bU_2^h)^T\big)\,\d\bx_1\,\d\tau
\\
+ \,\int_0^t\int_{\Omega_{F}^{1}(\tau)}\big(2\D\bU_2:\D\bu_1^h + 2\D\bu_1:\D\bU_2^h\,\big)\,\d\bx_1\,\d\tau
\\
+ \int_0^t\int_{\Omega_{F}^{1}(\tau)} (\bU_2-\bu_1)\cdot\nabla\bU_2\cdot\bu_1^h \,\d\bx_1\,\d\tau
+ \int_{\Omega}\big(\bU_2(t)\cdot\bu_1^h(t) + \bu_1(t)\cdot\bU_2^h(t)\big) \,\d\bx_1
\\
=
-\int_{0}^{t}\langle F(\tau),\mathbf{u}_1^h(\tau) \rangle \,\d\tau
+\int_{\Omega}\bu_{0}\cdot(\bu_1^h(0,\cdot )+\bU_2^h(0,\cdot )) \,\d\bx_1
\\
- \int_{0}^{t} \int_{S_1(\tau)} \widetilde{\bomega}\times \bU_{2}\cdot \bu_1^h-\bu_{1}\times\bU_{2}\cdot\bomega_1^h\,\d\bx_1\,\d \tau.
\end{multline}
Let $h\to 0$. Lemma \ref{Reynolds_gen} implies
\begin{align}
\int_0^t\int_{\Omega_{F}^{1}(\tau)}&\big (\bU_2\cdot\partial_t\bu_1^h + \bu_1\cdot\partial_t\bU_2^h\big)
\notag\\
\to &
- \int_0^t\int_{\Omega_{F}^{1}(\tau)} \nabla(\bU_{2}\cdot\bu_{1})\cdot\bu_{1}
+ \int_{\Omega_F^1(t)}\bU_2(t)\cdot \bu_1(t)\,\,
- \int_{\Omega_F^1}\bu_0\cdot \bu_0
\notag\\
&= - \int_0^t\int_{\Omega_{F}^{1}(\tau)}\big (\bu_1\otimes\bU_2:\nabla(\bu_1)^T+ \bu_1\otimes\bu_1:\nabla(\bU_2)^T\big)
\\
&\qquad
+ \int_{\Omega_F^1(t)}\bU_2(t)\cdot \bu_1(t)\,\,
- \int_{\Omega_F}\bu_0\cdot \bu_0,
\qquad\text{for a.e. } t\in[0,T],
\notag
\end{align}
on the fluid domain, and
\begin{align}
\int_0^t\int_{S_1(\tau)}&\big (\bU_2\cdot\partial_t\bu_1^h + \bu_1\cdot\partial_t\bU_2^h\big)
\notag\\
&\to
- \int_0^t\int_{S_1(\tau)} \nabla(\bU_{2}\cdot\bu_{1})\cdot\bu_{1}
+ \int_{S_1(t)}\bU_2(t)\cdot \bu_1(t)\,\,
- \int_{S_0}\bu_0\cdot \bu_0
\notag\\
&= -\int_{0}^{t}\int_{S_1(\tau)} (\nabla\bU_2^T\bu_1 + \nabla\bu_1^T\bU_2)\cdot\bu_1
+ \int_{S_1(t)}\bU_2(t)\cdot \bu_1(t)\,\,
- \int_{S_0}\bu_0\cdot \bu_0
\notag\\
&= \int_{0}^{t}\int_{S_1(\tau)} (\bOmega_2\times\bu_1+\bomega_1\times\bU_2)\cdot\bu_1
+ \int_{S_1(t)}\bU_2(t)\cdot \bu_1(t)\,\,
- \int_{S_0}\bu_0\cdot \bu_0
\notag\\
&=
-\int_{0}^{t}\int_{S_1(\tau)}
( \underbrace{(\bu_1\times\bu_1)}_{=0}\cdot\bOmega_2 + (\bu_1\times\bU_2)\cdot\bomega_1 )
\\
&\qquad
+ \int_{S_1(t)}\bU_2(t)\cdot \bu_1(t)\,\,
- \int_{S_0}\bu_0\cdot \bu_0,
\qquad\text{for a.e. } t\in[0,T],
\notag
\end{align}
on the solid domain.
From above, by using \eqref{estimateF2} and Lemmas \ref{estimateOmega} and \ref{additionalLemma}, we get	
\begin{multline}	\label{EqU2}
\int_{0}^{t}\int_{\Omega_F^{1}(\tau)} 4\,\D\bu_1:\D\bU_2  \,\d\bx_1 \,\d\tau
+ \int_{0}^{t}\int_{\Omega_F^{1}(\tau)} (\bU_2-\bu_1)\cdot\nabla\bU_2\cdot\bu_1 \,\d\bx_1 \,\d\tau
\\
+ \int_{\Omega} \,\bu_1(t)\cdot\bU_2(t)\,\d\bx_1
= -\int_{0}^{t}\langle F(\tau),\bu_1(\tau) \rangle \,\d\tau
+ \int_{\Omega} \,|\bu_0|^{2} \,\d\bx_1\,
\\
- \int_{0}^{t} \int_{S_1(\tau)} \widetilde{\bomega}\times \bU_{2}\cdot \bu_1 \,\d\bx_1\,\d \tau,
\end{multline}
for a.e. $t\in[0,T]$.
Now we take $\eqref{EI1}+\eqref{EE2}-\eqref{EqU2}$:
\begin{equation} \label{estimate}
\begin{split}
& \frac{1}{2}\|\bu(t)\|_{L^2(\Omega)}^2
+ 2\,\int_{0}^{t}\int_{\Omega_F^{1}(\tau)} |\D\bu|^{2} \,\d\bx_1 \,\d\tau
+ \int_{0}^{t}\int_{\Omega_F^{1}(\tau)} \bu\cdot\nabla\bU_2\cdot\bu \,\d\bx_1 \,\d\tau
\\
&\leq -\int_{0}^{t}\langle F(\tau),\bu(\tau)\rangle \,\d\tau \,
+ \int_{0}^{t} \int_{S_1(\tau)} \widetilde{\bomega}\times \bU_{2}\cdot \bu \,\d\bx_1\,\d \tau,
\end{split}
\end{equation}
for a.e. $t\in[0,T]$.
By integration by parts the last term on the left side of this identity is written as
\begin{equation}	\label{int}
\int_{0}^{t}\int_{\Omega _{F}^{1}(\tau)}\bu\cdot \nabla \bU_{2}\cdot \bu
=-\int_{0}^{t}\int_{\Omega _{F}^{1}(t)}
\bu\cdot \nabla \bu\cdot \bU_{2}
\end{equation}
and it can be estimated by using Lemma \ref{additionalLemma} (for $\bv=\bw=\bu_1-\bU_{2}$ and $\bu=\bU_{2}$):
\begin{align*}
\Big \vert\, \int_0^t\int_{\Omega_{F}^{1}(t)}\bu\cdot\nabla \bu\cdot \bU_2 \,\,\Big \vert
&\leq C \Big ( \int_0^t
\Vert\bu\Vert_{H^1}^2
\, \Big )^{1-\frac{1}{r}}
\Big ( \int_0^t \Vert \bu\Vert_{L^2}^2 \Vert \bU_{2}\Vert_{L^s}^r\,\Big )^{\frac{1}{r}}\\
&\leq \varepsilon  \int_0^t
\Vert \bu\Vert_{H^1}^2
+ C \int_0^t \Vert \bu\Vert_{L^2}^2 \Vert \bU_{2}\Vert_{L^s}^r
\\
&\leq \varepsilon  \int_0^t
\Vert\D \bu\Vert_{L^2}^2
+ C \int_0^t \Vert \bu\Vert_{L^2}^2 (1+\Vert \bU_{2}\Vert_{L^s}^r).
\end{align*}
The right-hand side of \eqref{estimate} can be estimated by using Lemma \ref{estimateOmega} and \eqref{estimateF_Gronwall}.

By putting all estimates together we conclude:
\begin{multline*}
\frac{1}{2}\|\bu(t)\|_{L^2(\Omega)}^2
+ \,2\int_0^t\int_{\Omega_F(\tau)} |\D(\bu)|^{2}
\leq  C\int_{0}^{t}\Vert
\mathbf{u}(\tau)\Vert _{L^{2}(\Omega )}^{2}\big (1+\Vert \mathbf{U}_{2}(\tau)\Vert _{L^{s}(\Omega
	_{F}^{1}(\tau))}^{r}
	+\Vert \tau\nabla P_2\Vert_{L^{q}(\Omega _{F}^{1}(\tau))}^{p}
)\\
+ \varepsilon\,\int_0^t\int_{\Omega_F(\tau)} |\D\bu|^{2},
\qquad\text{for a.e. } t\in[0,T].
\end{multline*}
Taking $\varepsilon=2$ we get
\begin{equation*}
\Vert \mathbf{u}(t)\Vert _{L^{2}(\Omega )}^{2}\leq C\int_{0}^{t}\Vert
\mathbf{u}(\tau)\Vert _{L^{2}(\Omega )}^{2}\big (1+\Vert \mathbf{U}_{2}(r)\Vert _{L^{s}(\Omega
	_{F}^{1}(\tau))}^{r} 
	+\Vert \tau\nabla P_2\Vert_{L^{q}(\Omega _{F}^{1}(\tau))}^{p}
)\ \d \tau,
\qquad\text{for a.e. } t\in[0,T].
\end{equation*}
Now by the integral Gronwall's inequality we conclude that $\bu = 0${,
	that is 
	$$
	\bu_1=\bU_{2}=\nabla\widetilde{\bX}_1\bu_2,\quad\text{in } (0,T)\times\Omega.
	$$
	It remains to show that $\bB_{1}=\bB_{2}$ and $\bu_{1}=\bu_{2}$.
	Since
	$$
	\bu_1 =\ba_{1}+\bomega_1\times(\bx_1-\bq_1)
	\quad\text{and}\quad
	\bU_2 =\bA_{2}+\bOmega_2\times(\bx_1-\bq_1)
	$$
	on $S_1(t)$, it follows that 
	\begin{align}
	\ba_{1} &= \bA_{2} 
	= \Q^{T}\ba_{2}
	= \Q_1\Q_2^{T}\ba_{2}
	\quad\Leftrightarrow\quad 
	\Q_1^{T}\ba_{1} = \Q_2^{T}\ba_{2},
	\label{transl_vel}
	\\
	\bomega_{1} &= \bOmega_{2}
	= \Q^{T}\bomega_{2}
	= \Q_1\Q_2^{T}\bomega_{2}
	\quad\Leftrightarrow\quad 
	\Q_1^{T}\bomega_{1} = \Q_2^{T}\bomega_{2}.
	\label{rot_vel}
	\end{align}
	Now, 
	\eqref{rot_vel} and
	\begin{align*}
	\Q_i^{T}\bomega_{i} \times\bx
	&= \Q_i^{T}(\bomega_{i} \times\Q_i\bx)
	= \Q_i^{T}\P_i\Q_i\bx
	= \Q_i^{T}\Q_i'\Q_i^{T}\Q_i\bx
	= \Q_i^{T}\Q_i'\bx,
	\quad\forall\bx\in\R^3.
	\end{align*}
	give
	$$
	\Q_1^{T}\Q_1' = \Q_2^{T}\Q_2'
	\quad\Leftrightarrow\quad
	\Q_1' = \Q_1^{T}\Q_2^{T}\Q_2'
	$$
	which implies that
	$$
	\Q_1'-\Q_2' = \Q_1^{T}\Q_2^{T}\Q_2' - \Q_2'
	= (\Q_1-\Q_2)\Q_2^{T}\Q_2.
	$$
	We conclude that $\Q_{\Delta} = \Q_1-\Q_2$ is the solution to the problem
	$$
	\left\lbrace 
	\begin{array}{l}
	\frac{d}{dt} \Q_{\Delta}=\Q_{\Delta}\mathbb{W}\\
	\Q_{\Delta}(0) =0,
	\end{array}
	\right.
	$$
	where $\mathbb{W} = \Q_2^{T}\Q_2'$. 
	Since the above problem has a unique solution $\Q_{\Delta}=0$, it follows that $\Q_1=\Q_2.$
	Now, \eqref{transl_vel} implies that 
	$\ba_{1}=\ba_{2}$ so we conclude that
	$\bB_1=\bB_2$ and $\widetilde{\bX}_1=\widetilde{\bX}_2=\I$, that is $\bu_{1}=\bu_{2}$.	
}

\section{Slip boundary condition}
\label{Sec:slip}
Since the theory for weak and strong solutions for the fluid-rigid body problem with slip coupling condition is already developed, the weak-strong uniqueness can be proved along the same line as in the no-slip case. Therefore, here we just formulate the result and outline the differences. First, recall the definition of the problem \eqref{FSINoslip}$_{\rm slip}$:

\noindent
Find $(\mathbf{u},p,\mathbf{q},{\mathbb{Q}})$ such that
\begin{equation*}
\left.
\begin{array}{l}
\partial _{t}\mathbf{u}+(\mathbf{u}\cdot \nabla )\mathbf{u}=\mbox {div}
\left( {\mathbb{T}}(\mathbf{u},p)\right) , \\
\mbox {div }\mathbf{u}=0
\end{array}
\right\} \;\mathrm{in}\;\Omega _{F}(t)\times (0,T),
\end{equation*}
\begin{equation*}
\left.
\begin{array}{l}
\frac{d^{2}}{dt^{2}}\mathbf{q}=-\int_{\partial S(t)}{\mathbb{T}}(\mathbf{u}
,p)\mathbf{n}\,d\mathbf{\gamma }(\mathbf{x}), \\
\frac{d}{dt}(\mathbb{J}\boldsymbol{\omega })=-\int_{\partial S(t)}(\mathbf{x}-\mathbf{
	q}(t))\times {\mathbb{T}}(\mathbf{u},p)\mathbf{n}\,d\mathbf{\gamma }(\mathbf{
	x})
\end{array}
\right\} \;\mathrm{in}\;(0,T),
\end{equation*}
\begin{equation*}
(\mathbf{u}-\mathbf{u}_{s})\cdot \mathbf{n}=0,\qquad \beta (\mathbf{u}_{s}-
\mathbf{u})\cdot \boldsymbol{\tau }={\mathbb{T}}(\mathbf{u},p)\mathbf{n}
\cdot \boldsymbol{\tau }\qquad \mathrm{on}\;\partial S(t),
\end{equation*}
\begin{equation*}
\mathbf{u}(0,.)=\mathbf{u}_{0}\qquad \mathrm{in}\;\Omega ;\qquad \mathbf{q}
(0)=\mathbf{q}_{0},\qquad \mathbf{q}^{\prime }(0)=\mathbf{a}_{0},\qquad
\boldsymbol{\omega }(0)=\boldsymbol{\omega }_{0}.
\end{equation*}
The existence of a weak solution to system \eqref{FSINoslip}$_{\rm slip}$ was proven in \cite{ChemetovSarka}. Here we just briefly recall the definition of a weak solution.
We define the function spaces for the weak formulation:
\begin{equation*}
V^{0,2}(\Omega )=\{\mathbf{v}\in L^{2}(\Omega ):\ \,\mbox{div }\mathbf{v}
=0\;\ \text{ in}\;\mathcal{D}^{\prime }(\Omega ),\qquad \mathbf{v}\cdot
\mathbf{n}=0\;\ \text{ in}\;H^{-1/2}(\partial \Omega )\},
\end{equation*}
\begin{equation*}
BD_{0}(\Omega )=\left\{ \mathbf{v}\in L^{1}(\Omega ):\ \,\mathbb{D}\mathbf{v}
\in \mathcal{M}(\Omega ),\qquad \,\mathbf{v}=0\;\ \ \mbox{   on  }\;\partial
\Omega \right\},
\end{equation*}
where $\mathcal{M}(\Omega )$ is the space of bounded Radon measures,
$$
KB(S) =\left\{ \mathbf{v}\in BD_{0}(\Omega ):\,\ \mathbb{D}\mathbf{v}\in
L^{2}(\Omega \backslash \overline{S}),\; \mathbb{D}\mathbf{v}=0\ \ \text{
	a.e. on }S,\;\mbox{div}\,\mathbf{v}=0
\;{\rm in}\; \mathcal{D}^{\prime }(\Omega )\right\},
$$
where $S\subset\Omega$ is an open connected set with boundary $\partial S\in C^{2}$.

\begin{definition}
	\label{definitionSlip} The pair $(\mathbf{B},\mathbf{u}) $ is a
	weak solution to the system \eqref{FSINoslip}$_{\rm slip}$ if the following conditions
	are satisfied:
	
	1) The function $\mathbf{B}(t,\cdot ):\mathbb{R}^{3}\rightarrow \mathbb{R}
	^{3}$ \ is a orientation preserving isometry \eqref{is}, which defines a
	time-dependent set $S(t)$ by \eqref{set}. {
		The isometry $\bB$ is compatible with $\bu=\bu_S$ on $S(t)$ in the following sense: the rigid part of velocity $\bu$, denoted by $\bu_S$, satisfy condition \eqref{RigidVelocity}, and 
		$\bq,\; \Q$ are absolutely continuous on $\left[ 0,T\right]$ and satisfy \eqref{om}.
	}
	
	2) The function $\mathbf{u}\in L^{2}(0,T;KB(S(t)))\cap L^{\infty
	}(0,T;V^{0,2}(\Omega ))$ satisfies the integral equality
	{		
		\begin{multline} \label{Weak}
		\int_{0}^{T} \int_{S(t)}\mathbf{u}\boldsymbol{
			\psi }_{t}\, \d\mathbf{x}\,\d t
		+\int_{0}^{T} \int_{\Omega_F(t)}\{\mathbf{u}\boldsymbol{
			\psi }_{t}+(\mathbf{u}\otimes \mathbf{u}) :\mathbb{D}\boldsymbol{\psi }
		-2\,\mathbb{D}\mathbf{u}:\mathbb{D}\boldsymbol{\psi }\,\}\d\mathbf{x}\,\d t
		\notag \\
		-\int_{\Omega }\mathbf{u}(T,\cdot)\boldsymbol{\psi }(T,\cdot )\,\d\mathbf{x}
		=-\int_{\Omega }\mathbf{u}_{0}\boldsymbol{\psi }(0,\cdot )\,\d\mathbf{x}
		+\int_{0}^{T}\int_{\partial S(t)}\beta (\mathbf{u}_{s}-\mathbf{u}_{f})(
		\boldsymbol{\psi }_{s}-\boldsymbol{\psi }_{f})\,d\mathbf{\gamma }\,\d t,
		\end{multline}
		which holds for any test function $\boldsymbol{\psi }$ such that
		\begin{eqnarray}
		\boldsymbol{\psi } &\in &L^{4}(0,T;KB(S(t))),  \notag \\
		\boldsymbol{\psi }_{t} &\in &L^{2}(0,T;L^{2}(\Omega \backslash \partial
		S(t))).  
		\label{testSlip}
		\end{eqnarray}
	}
	By $\mathbf{u}_{s}(t,\mathbf{\cdot }),$ $\boldsymbol{\psi }_{s}(t,\mathbf{
		\cdot })$ and $\mathbf{u}_{f}(t,\mathbf{\cdot }),$ $\boldsymbol{\psi }_{f}(t,
	\mathbf{\cdot })$\ \ we denote the trace values of $\mathbf{u},$ $
	\boldsymbol{\psi }$ \ on $\partial S(t)$\ \ from \ the "\textit{rigid}" side
	$S(t)$ and the "fluid" side $F(t)$, respectively.
	
	3) The  energy inequality
		\begin{equation*}
		\frac{1}{2}\|\bu(t)\|_{L^2(\Omega)}^2
		+ 2\int_{0}^{t}
		\int_{\Omega_{F}(\tau)}\,|\D \bu|^{2}
		\,\d\bx\,\d\tau
		+ \int_{0}^{t}
		\int_{\partial S(\tau)}\,\beta|\bu-\bu_s|^{2}
		\,\d\bx\,\d\tau
		\leq
		\frac{1}{2}\|\bu_0\|_{L^2(\Omega)}^2
		\end{equation*}
		holds for almost every $t\in(0,T)$.
\end{definition}

\begin{teo}\label{WeakStrongslip}
	Let $(\bu_{1},\bB_{1})$ and $(\bu_{2},\bB_{2})$ be two weak solutions  corresponding to Definition \ref{definitionSlip} with the  same data.
	Assume that $d(S_i(t),\partial\Omega)>\delta_i$, $i=1,2$, for some constants $\delta_i>0$.
	If $\bu_{2}$ satisfies the following condition:
	\begin{equation}\label{LrLs2}
	\bu_{2}\in L^r(0,T;L^s(\Omega))\quad
	\text{ for some } s,r \text{ such that }\quad
	\frac{3}{s}+\frac{2}{r}=1,\, s\in(3,+\infty)
	\end{equation}
	then
	\begin{equation*}
	(\bu_{1},\bB_{1}) = (\bu_{2},\bB_{2}).
	\end{equation*}
\end{teo}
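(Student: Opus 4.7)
The plan is to follow the proof strategy of Theorem \ref{WeakStrong}, adapting each step to accommodate the slip coupling condition. First, I would transfer the second solution to the fluid domain of the first via the volume-preserving diffeomorphisms $\widetilde{\bX}_1, \widetilde{\bX}_2$ constructed in the Appendix, producing the transformed quadruple $(\bU_2, P_2, \bA_2, \bOmega_2)$ as in \eqref{eq:Transformed}. Because the admissible test functions from Definition \ref{definitionSlip} only satisfy $\D\boldsymbol{\psi}=0$ a.e.\ on $S(t)$ rather than being $H^1$-conformal across $\partial S(t)$, I must verify that the change-of-variables computations from Section \ref{Sec:TransformedWeak} still carry through in the presence of tangential jumps; since $\widetilde{\bX}_i$ are rigid in a neighborhood of $S_i(t)$, the transformation preserves both the jump structure and the no-penetration condition across the interface. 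The slip analogue of Proposition \ref{WeakU2} then produces a weak formulation for $\bU_2$ on $(0,T)\times\Omega_F^1(t)$ with an additional boundary term of the form $\int_0^T\int_{\partial S_1(t)}\beta(\bU_{2,s}-\bU_{2,f})\cdot(\boldsymbol{\psi}_s-\boldsymbol{\psi}_f)\,\d\gamma\,\d t$ inherited from the slip coupling, together with the same operators $\mathcal{L}, \mathcal{M}, \widetilde{\mathcal{N}}, \widetilde{\mathcal{G}}$ and rigid-body correction terms already present in the no-slip case.

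Next, since the slip case bypasses the existence of an associated global pressure, I would directly establish the local-in-time regularity of the pressure (the announced Proposition \ref{pressure_regularity_slip}) by considering the auxiliary Stokes system with slip coupling and invoking the corresponding maximal regularity result from \cite{SarkaSlipRigidStrong,WangFluidSolid}, combined with Lemma \ref{IntConvSerrin} to control the convective source under the Prodi--Serrin hypothesis. The estimate on the pressure defect $(\mathcal{G}-\nabla)P_2$ from Lemma \ref{estimate_pressure1} then carries over verbatim, since it depends only on the regularity of $t\nabla p_2$ and on the bound $\|\partial_t\widetilde{\bX}_1\|_{W^{1,\infty}}\lesssim |\ba_1-\bA_2|+|\bomega_1-\bOmega_2|$, both of which are unaffected by the boundary condition.

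The third step is the derivation of the transformed energy-type equality analogous to Proposition \ref{EnergyEquality}. I would employ the same Lagrangian time-convolution \eqref{RegDef1}--\eqref{RegDef3}, observing that the regularized objects remain in the slip test class $KB(S_1(t))$ because the change of variables is rigid near the body. Testing the transformed $\bU_2$-formulation with $\bU_2^h$ and passing $h\to 0$ via Lemma \ref{Reynolds_gen} yields an energy equality containing the extra boundary dissipation $\int_0^t\int_{\partial S_1(\tau)}\beta|\bU_{2,s}-\bU_{2,f}|^2\,\d\gamma\,\d\tau$. Adding the energy inequality for $\bu_1$, which by Definition \ref{definitionSlip} already contains $\int_0^t\int_{\partial S_1}\beta|\bu_{1,s}-\bu_{1,f}|^2$, and subtracting the mixed equations obtained by testing each formulation with the regularization of the other solution, the cross boundary terms collect into
\[
-\int_0^t\int_{\partial S_1(\tau)}\beta\bigl|(\bu_1-\bU_2)_s-(\bu_1-\bU_2)_f\bigr|^2\,\d\gamma\,\d\tau,
\]
which has the correct (dissipative) sign and can safely be discarded from the upper bound.

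The final step, closing the estimates, then proceeds exactly as in Section \ref{Sec:MainProof}: Lemma \ref{additionalLemma}, Lemma \ref{estimateF}, Lemma \ref{estimate_pressure} and the bound \eqref{estimateF_Gronwall} combine to yield an inequality
\[
\|\bu(t)\|_{L^2(\Omega)}^2 \leq C\int_0^t \|\bu(\tau)\|_{L^2(\Omega)}^2\bigl(1+\|\bU_2(\tau)\|_{L^s(\Omega_F^1(\tau))}^r+\|\tau\nabla P_2\|_{L^q(\Omega_F^1(\tau))}^p\bigr)\,\d\tau
\]
for $\bu = \bu_1-\bU_2$, and Gronwall's inequality together with the algebraic argument at the end of Section \ref{Sec:MainProof} forces $\bB_1=\bB_2$ and $\bu_1=\bu_2$. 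The principal obstacle I foresee is the careful bookkeeping of the slip boundary contributions: one must show that each of the four mixed integrals arising from the regularized test functions produces its own boundary term and that these terms assemble into precisely the dissipative square of the interface jump of $\bu_1-\bU_2$, rather than leaving an uncontrolled residual of lower-order type. Once this assembly is verified, the rest of the argument is insensitive to the boundary condition and the theorem follows.
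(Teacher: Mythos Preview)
Your proposal is correct and follows essentially the same approach as the paper: transform the second solution to the first domain, derive the transformed weak formulation with the extra slip boundary term (which transforms cleanly because $\widetilde{\bX}_i$ is rigid near $\partial S_i(t)$), obtain local-in-time pressure regularity via a slip-coupled auxiliary Stokes system, and then close via the energy comparison and Gronwall's inequality with the slip boundary contributions assembling into the nonnegative term $\int_0^t\int_{\partial S_1}\beta|(\bu_1-\bU_2)_s-(\bu_1-\bU_2)_f|^2$. The only minor discrepancy is that for the maximal regularity step the paper invokes \cite{bravin2018weak} (adapted to 3D) rather than \cite{SarkaSlipRigidStrong,WangFluidSolid}, and it notes that the handling of the additional boundary term is carried out exactly as in \cite{chemetov2017weak}; otherwise your outline matches the paper's proof.
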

As in the no-slip case, the first step is to show that the transformed solution $(\mathbf{U}_{2},P_{2},\mathbf{A}_{2},\boldsymbol{\Omega }_{2})$ satisfies the following equality:
\begin{multline*}
\int_{0}^{T}\int_{\Omega}\bU_2\cdot\partial_t\boldsymbol{\psi }\, \d\bx_1\,\d t
+\int_{0}^{T}\int_{\Omega_F^1(t)}\Big((\bu_1\otimes \bU_2) :\nabla\boldsymbol{\psi }^T
-(\bU_{2}-\bu_{1})\cdot \nabla \bU_{2}\cdot \boldsymbol{\psi }\,\Big)\, \d\bx_1\, \d t\\
-2\int_{0}^{T}dt\int_{\Omega \backslash \partial S_{1}(t)}
\mathbb{D}\mathbf{U}_{2}:\mathbb{D}\boldsymbol{\psi }\,d\mathbf{x}_{1}
= \int_{0}^{T} \langle F(t),\boldsymbol{\psi }(\tau)\rangle\, \d t
-\int_{\Omega }\mathbf{u}_{0}\boldsymbol{\psi }(0,\cdot )\,d\mathbf{x}_{1}\\
+ \int_{0}^{T}dt\left\{
\int_{\partial S_{1}(t)}\beta (\mathbf{U}_{s}^{2}-\mathbf{U}_{2})\cdot (
\boldsymbol{\psi }_{s}-\boldsymbol{\psi }_{f})\,d\mathbf{\gamma }(\mathbf{x}
_{1})\right\}
\\
+\int_{0}^{T}(\widetilde{\boldsymbol{\omega }}\times (J_{1}\boldsymbol{
	\Omega }_{2})\cdot \boldsymbol{\psi }_{\omega }+\widetilde{\boldsymbol{
		\omega }}\times \mathbf{A}_{2}\cdot \boldsymbol{\psi }_{h})\ dt,
\label{StrongSlip}
\end{multline*}
which holds for any test function $\boldsymbol{\psi }$ \ satisfying 
\eqref{testSlip}. Let us note that this function $\boldsymbol{\psi }$ is rigid on $S_{1}(t)$, that is,
\begin{equation*}
\boldsymbol{\psi }(t,\mathbf{x})=\boldsymbol{\psi }_{h}(t)+\boldsymbol{\psi }
_{\omega }\times (\mathbf{x}-\mathbf{q}_{1}(t))\qquad \text{for }\mathbf{x}
\in S_{1}(t).
\end{equation*}

The proof of the above claim is analogous to the proof in the no-slip case (see Proposition \ref{WeakU2} and Remark \ref{weak_transformed}). We just have to see how the term corresponding to the slip condition transforms. We have

\begin{displaymath}
\bU(t,\bx_1) = \nabla\tilde{\bX}_1(t,\tilde{\bX}_2(t,\bx_1))\,
\bu(t,\tilde{\bX}_2(t,\bx_1)),
\end{displaymath}
i.e.
\begin{displaymath}
\bu(t,\bx_2) = \nabla\tilde{\bX}_2(t,\tilde{\bX}_1(t,\bx_2))\,\bU(t,\tilde{\bX}_1(t,\bx_2)),
\end{displaymath}
\begin{equation*}
\bphi(t,\bx_2) = \nabla\tilde{\bX}_1(t,\bx_2)^T\bpsi(t,\tilde{\bX}_1(t,\bx_2)),
\end{equation*}
and
\begin{displaymath}
\nabla\tilde{\bX}_1(t,\tilde{\bX}_2(t,\bx_1)) = \Q^T(t),
\quad
\nabla\tilde{\bX}_2(t,\bx_1) = \Q(t)
\quad \quad \mathrm{on}\;\partial S_{1}(t)
\end{displaymath}
implies
\begin{displaymath}
\mathbf{u}_{s}^{2}-\mathbf{u}_{2}
=\Q (\mathbf{U}_{s}^{2}-\mathbf{U}_{2})\quad \quad \mathrm{on}\;\partial S_{1}(t),
\end{displaymath}
hence
\begin{displaymath}
(\mathbf{u}_{s}^{2}-\mathbf{u}_{2})\cdot(\bphi_s-\bphi_f)
=\Q (\mathbf{U}_{s}^{2}-\mathbf{U}_{2})\cdot\Q(\bpsi_s-\bpsi_f)
= (\mathbf{U}_{s}^{2}-\mathbf{U}_{2})\cdot(\bpsi_s-\bpsi_f)
\end{displaymath}
on $\partial S_{1}(t)$.

{Existence of an associated pressure can be proved along the same lines as in Section\ref{Sec:Pressure} with the difference that on the fixed domain we apply the results from \cite{KNN} instead of \cite{Neu}. The local-in-time regularity of the pressure is proved in the same way as in Section \ref{Sec:pressurereg} by using the results from \cite{bravin2018weak} instead of \cite{GlassSueur} and \cite{GGH13}.
	More precisely, the following proposition holds.
	
\begin{proposition} \label{pressure_regularity_slip}
		Let $(\bu_2,p_2)$ be a weak solution of the problem \eqref{FSINoslip}$_{\rm slip}$, and let $\bu_2$ satisfy the Prodi-Serrin condition. Then the following regularity result holds:
		$$
		t\bu_{2}\in L^p(0,T;W^{2,q}(\Omega_F^2(t))),
		\quad
		t\partial_{t}\bu_{2}, t\nabla p_{2}\in L^p(0,T;L^{q}(\Omega_F^2(t))),
		$$
		for $\frac{1}{q} = \frac{1}{2} + \frac{1}{s}$, $\frac{1}{p} = \frac{1}{2} + \frac{1}{r}.$
\end{proposition}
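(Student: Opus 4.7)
The plan is to follow the strategy of Proposition \ref{pressure_regularity} line-by-line, swapping the no-slip linear theory with its slip counterpart. First I would introduce the auxiliary linear Stokes--rigid body system obtained from \eqref{Stokes} by replacing the no-slip coupling by the Navier slip conditions
$$
(\bv-\bv_s)\cdot\bn = 0, \qquad \beta(\bv_s - \bv)\cdot\btau = \T(\bv,p)\bn\cdot\btau \quad \text{on } \partial S(t),
$$
keeping the inhomogeneous data $\mathbf{g}\in L^{p}(0,T;L^{q}(\Omega_F^2(t)))$ in the fluid equation and $\mathbf{g}_1,\mathbf{g}_2\in L^{p}(0,T)$ in the rigid-body equations, with vanishing initial data. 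The target is the slip analogue of Lemma \ref{strongStokes}: existence and uniqueness of a solution $(\bv,p)$ with
$$
\bv\in L^{p}(0,T;W^{2,q}(\Omega_F^2(t))),\qquad \partial_t\bv,\ \nabla p\in L^{p}(0,T;L^{q}(\Omega_F^2(t))).
$$

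Next I would transfer the auxiliary problem onto the fixed domain $\Omega_F=\Omega_F^2(0)$ via the change of variables from Appendix \ref{Sec:LT}. The transformation is rigid (equal to the rotation $\Q^T$) in a neighborhood of the body, so the slip condition transforms cleanly; this is exactly the same identity used just after the statement of Proposition \ref{pressure_regularity_slip} to rewrite $(\mathbf{u}_s^2-\mathbf{u}_2)\cdot(\bphi_s-\bphi_f)$. On the fixed domain one applies maximal $L^{p}$-$L^{q}$ regularity for the linearized Stokes--rigid body operator with slip coupling; this is precisely the framework developed in \cite{bravin2018weak}, whose $\R^{2}$ construction of the semigroup adapts to $\R^{3}$ without essential change. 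The short-time fixed-point procedure of \cite[Sections 5-7]{GGH13} yields a solution on an interval $[0,T']$ with $T'$ independent of the initial data, which is then iterated to the full interval $[0,T]$.

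Finally, to deduce the claimed regularity for $(\bu_2,p_2)$ itself, I would verify, in complete parallel with \cite[Lemma 6]{GlassSueur}, that the pair $(t\bu_2,tp_2)$ is a weak solution of the auxiliary slip--Stokes system with data
$$
\mathbf{g} = \bu_2 - t(\bu_2\cdot\nabla)\bu_2, \qquad \mathbf{g}_1 = \ba_2, \qquad \mathbf{g}_2 = \bomega_2.
$$
By Lemma \ref{IntConvSerrin} and the fact that $\ba_2,\bomega_2\in L^{\infty}(0,T)\subset L^{p}(0,T)$, these data have the required integrability. The slip analogues of \cite[Lemmas 5 and 8]{GlassSueur} -- every strong solution of the auxiliary system is a weak solution, and the weak solution is unique -- go through verbatim once one uses the test-function class from Definition \ref{definitionSlip} and incorporates the boundary integral $\int_{\partial S(t)}\beta(\bv_s-\bv)\cdot(\bpsi_s-\bpsi_f)$ into the energy identity. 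The strong regularity of the auxiliary solution therefore transfers to $(t\bu_2,tp_2)$, which is the statement of the proposition. The principal technical obstacle I expect is verifying that the $L^{p}$-maximal regularity of \cite{bravin2018weak} extends from the 2D to the 3D slip setting; this reduces to checking that the Navier-slip Stokes operator on the fixed 3D geometry generates an analytic semigroup on the appropriate product space coupling the fluid velocity with the translational and angular rigid velocities, which follows from standard arguments once the correct slip projection replaces the Leray projection in the construction of \cite{GGH13}.
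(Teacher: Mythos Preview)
Your proposal is correct and follows essentially the same approach as the paper: introduce the auxiliary Stokes--rigid body system with slip coupling (the paper's Lemma \ref{strongStokes_slip}), obtain its $L^p$-$L^q$ maximal regularity via transformation to the fixed domain and the results of \cite{bravin2018weak} adapted from 2D to 3D, and then identify $(t\bu_2,tp_2)$ as a weak solution of this system with the data you wrote, concluding by uniqueness. The paper gives even fewer details than you do, simply pointing to \cite{bravin2018weak} in place of \cite{GlassSueur} and \cite{GGH13}, so your outline is entirely in line with the intended argument.
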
	
	
The proof of Proposition \ref{pressure_regularity} for the slip case is analogous to the proof of
\cite[Lemma 3]{bravin2018weak}, which is given for the two-dimensional case but also holds true for the three-dimensional, by the similar adaptation as in the no-slip case (Section \ref{Sec:pressurereg}). The main ingredient is the following lemma, which is analogous to \cite[Theorem 4]{bravin2018weak} and is based on the maximal regularity result for Stokes-rigid body system, together with a fixed point argument.	
	\begin{lemma} \label{strongStokes_slip}
		Let $\mathbf{g}\in L^{p}(0,T;L^{q}(\Omega_F^2(t)))$ and $ \mathbf{g}_1,\mathbf{g}_2\in L^{p}(0,T)$.
		There exists a unique solution to the system 
		\begin{equation*}	\label{Stokes_slip}
		\begin{array}{l}
		\left.
		\begin{array}{l}
		\partial_{t}\bv - \Delta\bv + \nabla p
		= \mathbf{g} , \\
		\divg\bv = 0
		\end{array}
		\right\} \;\mathrm{in}\;\bigcup_{t\in(0,T)} \{t\}\times\Omega_{F}(t),
		\\
		\left.
		\begin{array}{l}
		{\frac{d}{dt}\ba} = -\int_{\partial S(t)}{\T}(\bv,p) \bn\,\d\bgamma(\bx) 
		{\,+\, \mathbf{g}_1}, \\
		\frac{d}{dt}(\J\bomega) = -\int_{\partial S(t)}(\bx-\bq(t))\times {\T}(\bv,p)\bn\,\d\bgamma(\bx) 
		{\,+\, \mathbf{g}_2}
		\end{array}
		\right\} \;\mathrm{in}\;(0,T),
		\\
		(\mathbf{v}-\mathbf{v}_{S})\cdot \mathbf{n}=0,\qquad \beta (\mathbf{v}_{S}-
		\mathbf{v})\cdot \boldsymbol{\tau }={\mathbb{D}}(\mathbf{v})\mathbf{n}
		\cdot \boldsymbol{\tau },\quad \mathrm{on}\;\bigcup_{t\in(0,T)} \{t\}\times\partial S(t),
		\\
		\bv = 0 \quad \mathrm{on}\, \partial \Omega,
		\end{array}
		\end{equation*}	
		with vanishing initial data which satisfies
		$$
		\bv\in L^p(0,T;W^{2,q}(\Omega_F^2(t))),
		\quad
		\partial_{t}\bv, \nabla {p}\in L^p(0,T;L^{q}(\Omega_F^2(t))).
		$$
	\end{lemma}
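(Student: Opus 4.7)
The plan is to mirror the strategy used for Lemma \ref{strongStokes} in the no-slip case, but replacing the maximal regularity ingredient for the linear Stokes--rigid body system with its slip counterpart. First I would transfer the moving-domain problem to the fixed domain $\Omega_F = \Omega_F^2(0)$ via the change of variables $\bX_2,\bY_2$ from Appendix \ref{Sec:LT}, setting $\tilde{\bv}(t,\by)=\nabla\bY_2^T\bv(t,\bX_2(t,\by))$, $\tilde{p}(t,\by)=p(t,\bX_2(t,\by))$. As in the no-slip derivation this produces a linear Stokes--rigid body system on the cylindrical domain $(0,T)\times\Omega_F$ with the modified right-hand side
\[
\tilde{\mathbf{f}}=\tilde{\mathbf{g}}+(\mathcal{L}-\Delta)\tilde{\bv}-\mathcal{M}\tilde{\bv}-(\mathcal{G}-\nabla)\tilde{p},\qquad \tilde{\mathbf{f}}_1=\Q_1^T\mathbf{g}_1-\tilde{\bomega}\times\tilde{\ba},\qquad \tilde{\mathbf{f}}_2=\Q_1^T\mathbf{g}_2.
\]
The slip boundary condition transforms cleanly, because near $\partial S_0$ the transformation is rigid and equal to $\Q(t)$, so the normal/tangential split and the friction coupling are preserved (exactly the computation carried out at the end of Section \ref{Sec:slip}).

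Next I would invoke $L^p(0,T;L^q)$ maximal regularity for the purely linear Stokes--rigid body system on the fixed domain $\Omega_F$ with Navier slip boundary conditions. In 2D this is \cite[Theorem 4]{bravin2018weak}; the same argument extends to 3D, since the associated generator is still sectorial with bounded $H^\infty$-calculus on the subspace of solenoidal fields with $\mathbf{v}\cdot\mathbf{n}=0$ on $\partial\Omega\cup\partial S_0$, so the semigroup approach yields, for vanishing initial data,
\[
\|\tilde{\bv}\|_{L^p(0,T;W^{2,q})}+\|\partial_t\tilde{\bv}\|_{L^p(0,T;L^q)}+\|\nabla\tilde{p}\|_{L^p(0,T;L^q)}+\|\tilde{\ba}'\|_{L^p(0,T)}+\|\tilde{\bomega}'\|_{L^p(0,T)}\le C\bigl(\|\tilde{\mathbf{f}}\|_{L^p L^q}+\|\tilde{\mathbf{f}}_1\|_{L^p}+\|\tilde{\mathbf{f}}_2\|_{L^p}\bigr).
\]

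I would then close the system with a fixed point argument on a short interval $[0,T']$. The key point is that the operators $\mathcal{L}-\Delta$, $\mathcal{M}$ and $\mathcal{G}-\nabla$ carry small coefficients on a short interval: the quantities $g^{ij}-\delta_{ij}$, $\Gamma^k_{ij}$, $\nabla\bX_2\nabla\bX_2^T-\I$ and $\partial_t\bY_2$ all vanish at $t=0$ and are controlled in $W^{1,\infty}$ by $\|(\tilde{\ba},\tilde{\bomega})\|_{L^p(0,T')}$, which can be made arbitrarily small by shrinking $T'$. Consequently the map sending $(\tilde{\bv},\tilde{\ba},\tilde{\bomega})$ to the solution of the linearised problem with right-hand side $\tilde{\mathbf{g}}+(\mathcal{L}-\Delta)\tilde{\bv}-\mathcal{M}\tilde{\bv}-(\mathcal{G}-\nabla)\tilde{p}$ is a contraction on a ball in the maximal regularity space, for $T'$ small depending only on $\|\mathbf{g}\|_{L^pL^q}$, $\|\mathbf{g}_i\|_{L^p}$ and on the geometric data (in particular the lower bound on $d(S(t),\partial\Omega)$). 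This yields existence, uniqueness and the claimed regularity on $[0,T']$. Finally, since $T'$ does not depend on the initial values in a deteriorating way, one iterates along $[T',2T'],[2T',3T'],\dots$ to cover $[0,T]$ in finitely many steps, exactly as in \cite[Section~7]{GGH13}.

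The main obstacle is justifying the $L^p$--$L^q$ maximal regularity of the linear Stokes--rigid body operator with Navier slip on the fixed domain in three dimensions: compared with the no-slip case, the boundary operator is only partially Dirichlet (just the normal component of $\bv-\bv_S$), and one must verify $\mathcal{R}$-boundedness of the resolvent for the mixed slip/no-slip boundary system, taking care of the interaction with the rigid body ODE. Once that step is in place, the transformation, contraction and iteration are routine adaptations of the no-slip argument.
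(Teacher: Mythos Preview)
Your proposal is correct and follows essentially the same approach as the paper: transform to the fixed domain via $\bX_2,\bY_2$, invoke $L^p$--$L^q$ maximal regularity for the linear Stokes--rigid body system with Navier slip (the paper cites \cite[Theorem~4]{bravin2018weak} for this, noting that the 2D argument carries over to 3D), close by a fixed point on a short interval, and iterate as in \cite[Section~7]{GGH13}. The paper itself gives no further details beyond pointing to these references, and you have correctly identified the one nontrivial ingredient---the 3D slip maximal regularity---as the place where work is required.
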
	
}
The rest of the proof of the weak-strong uniqueness 
follows the proof of the no-slip case.
All the estimates for the additional term
$$
\int_{0}^{T}dt\left\{
\int_{\partial S_{1}(t)}\beta (\mathbf{U}_{s}^{2}-\mathbf{U}_{2})\cdot (
\boldsymbol{\psi }_{s}-\boldsymbol{\psi }_{f})\,d\mathbf{\gamma }(\mathbf{x}
_{1})\right\}
$$
are the same as in \cite{chemetov2017weak}.

\appendix
\section{Appendix}

\subsection{Local transformation}
\label{Sec:LT}

In the proof of Theorem \ref{WeakStrong}, since fluid domains of the strong and the weak solution are a priori different, we transform the problem into a common domain.
We use the transformation presented in \cite{chemetov2017weak} to transform a strong solution to the domain of a weak solution, which is a moving domain, in a way that preserves the divergence-free condition.
It is defined by a transformation to a fixed domain as in \cite{Takahashi03} or \cite{GGH13}, which we also need for the construction of regularization. Even though this transformation is by now standard in the literature, here we briefly describe this transformation and recall its main properties for the convenience of the reader and to establish the notation that is used throughout the paper.	

According to \cite{GGH13,Takahashi03} we can define a transformation $\bX(t):\Omega
\rightarrow \Omega $ as the unique solution of the system
\begin{equation*}
\frac{d}{dt}\bX(t,\by)=\Lambda (t,\bX(t,\by)),
\qquad \bX(0,\by)=\by,
\qquad \forall \ \by\in\Omega .
\end{equation*}
where the velocity of change of coordinates $\Lambda(t,\bx)$ is a vector field that is smooth in the space variables and divergence-free, and satisfies $\Lambda =\mathbf{a}(t)+\bomega(t)\times (\bx-\mathbf{q}(t))$ in a neighborhood of $S(t)$ and $\Lambda =0$ in a neighborhood of $\partial \Omega $.

Note that the function $\Lambda$ is a divergence-free extension of the function $S(t)\ni\bx\mapsto\mathbf{a}(t)+\bomega(t)\times (\bx-\mathbf{q}(t))$ to the set $\Omega$. 
	The construction of the extension $\Lambda$ is given in \cite[Section 3]{GGH13} with little correction to the cut-off function $\chi$, where instead of balls $\overline{S(t)}\subset B_1\subset B_2$, we choose open sets $K_1,K_2$ such that $\overline{S(t)}\subset K_1\subset K_2\subset \Omega$. 
	We denote 
	$$
	\Lambda=:\mathrm{Ext}(\mathbf{a}+\bomega\times (\bx-\mathbf{q})).
	$$

Here, we assume that $\ba, \bomega\in L^{\infty}(0,T)$, which is slightly different from assumptions in \cite{GGH13,Takahashi03}. Therefore, for existence and uniqueness of solution $\bX$ we need Carath\'eodory's theorem (see e.g. \cite{roubivcek2013nonlinear}, Theorem 1.45) instead of the Picard-Lindel\"of theorem.

For all $t\in[0,T]$, the defined transformation $\bX(t)$ is a $C^{\infty}$ diffeomorphism and the derivatives
\begin{equation}	\label{partialDerivatives}
\frac{\partial^{|\alpha|+i}\bX}{\partial t^{i}\partial\by^{\alpha}},\qquad
i=0,1\ ,\quad \alpha\in\N_0^3,
\end{equation}
exist and are bounded.

We denote by $\bY$ the inverse of $\bX$, i.e.
\begin{equation*}
\bY(t,\cdot )=\bX(t,\cdot )^{-1}.
\end{equation*}
It satisfies the system of differential equations
\begin{equation*}
\frac{d}{dt}\bY(t,\bx)=\Lambda^{(\bY)} (t,\bY(t,\bx)),
\qquad \bY(0,\bx)=\bx,
\qquad \forall \ \bx\in\Omega ,
\end{equation*}
where
\begin{equation*}
\Lambda^{(\bY)} (t,\by) = -\nabla{\bX}(t,\by)^{-1}\Lambda(t,\bX(t,\by))
\end{equation*}
Note that $\bY$ possesses the same space and time regularity as $\bX$. Furthermore, $\bX$ and $\bY$ satisfy
\begin{equation*}
\nabla{\bX}(t,\by)\nabla{\bY}(t,\bX(t,\by)) = \mathrm{id}
\end{equation*}
and are volume-preserving, i.e.
\begin{equation}	\label{volumePreserving}
\det \nabla{\bX}(t,\by) = \det \nabla{\bY}(t,\bx) = 1,
\end{equation}
since $\divg\Lambda = 0$.

Then, by Proposition 2.4. in \cite{inoue1977existence}, the transformation of the velocity
\begin{equation*}
\bU(t,\by)
=\nabla{\bY}(t,\bX(t,\by))\bu(t,\bX(t,\by))
\end{equation*}
preserves the divergence, i.e.
\begin{equation*}
\divg_{\by}\bU(t,\by)
=\divg_{\bx}\bu(t,\bX(t,\by)),
\qquad\forall (t,\by)\in\Omega_{F}.
\end{equation*}
Now, by substituting the transformed solution
\begin{equation*}
\left\{
\begin{array}{l}
\bU(t,\by)=\nabla{\bY}(t,{\bX}(t,\by))\bu(t,{\bX}(t,\by)),
\\
P(t,\by)=p(t,{\bX}(t,\by)),
\\
\bA(t)=\Q^{T}(t)\ba(t),
\\
\bOmega(t)=\Q^{T}(t)\bomega(t),
\\
\mathcal{T}({\bU}(t,\by),P(t,\by))=\Q^{T}(t)
\T(\Q(t)\bU(t,\by),P(t,\by))\Q(t)
\end{array}
\right.
\end{equation*}
in the system of equations \eqref{FSINoslip}, we get (see \cite{GGH13} or \cite{chemetov2017weak})

\begin{equation}	\label{NSTransformed}
\left.
\begin{array}{l}
\partial _{t}\bU+(\bU\cdot \nabla )\bU-\triangle \bU+\nabla P = F, \\
\divg\bU = 0
\end{array}
\right\} \;\mathrm{in}\;(0,T)\times\Omega_{F},
\end{equation}
\begin{align}
\bA^{\prime }
&=-\bOmega\times \bA-\int_{\partial S_{0}}\mathcal{T}(\bU,P)\bN\,\d\gamma(\by)\qquad \mathrm{in}\;(0,T),
\label{RigidBodyTransformed1}
\\
(I\bOmega)^{\prime }
&=\bOmega\times (I\bOmega)
-\int_{\partial S_{0}} (
\by-\bq(t))\times {\mathcal{T}}(\bU,P)\bN \,\d\gamma(\by)
\quad \mathrm{in}
\;(0,T),
\label{RigidBodyTransformed2}
\end{align}
\begin{equation}	\label{couplingTransformed}
\bU = \bU_{s}\qquad \mathrm{on}\;(0,T)\times\partial S_0,
\qquad\quad \bU = 0\qquad \mathrm{on}\;(0,T)\times\partial\Omega,
\end{equation}
where
$
\bU_s = \bOmega\times\by + \bA
$
is the transformed rigid velocity $\bu_s$, $\bN=\bN(\by)$ is the unit normal at $\by\in S_0$, directed inside of $S_0$, $I={\Q}^{T}\J{\Q}$ is the transformed inertia tensor which no longer depends on time, and
\begin{equation*}
F = (\mathcal{L}-\triangle )\bU-\mathcal{M}\bU
-\tilde{\mathcal{N}}\mathbf{U}-(\mathcal{G}-\nabla )P.
\end{equation*}
The operator $\mathcal{L}$ is the transformed Laplace operator and it is given by
\begin{align}  \label{OpL}
(\mathcal{L}\mathbf{u})_{i}& =\sum_{j,k=1}^{n}\partial
_{j}(g^{jk}\partial_k \mathbf{u}_{i})+2\sum_{j,k,l=1}^{n}g^{kl}\Gamma
_{jk}^{i}\partial _{l}\mathbf{u}_{j}  \notag \\
& \quad
+\sum_{j,k,l=1}^{n}\big(\partial _{k}(g^{kl}\Gamma
_{jl}^{i})+\sum_{m=1}^{n}g^{kl}\Gamma _{jl}^{m}\Gamma _{km}^{i}\Big)\mathbf{u
}_{j},
\end{align}
the convection term is transformed into
\begin{equation}  \label{OpC}
(\mathcal{N}\mathbf{u})_i = \sum _{j=1}^n \mathbf{u}_j \partial _j \mathbf{u}
_i + \sum_{j,k=1}^n \Gamma ^i_{jk} \mathbf{u}_j\mathbf{u}_k
= (\bu\cdot\nabla\bu)_i + (\tilde{\mathcal{N}}\bu)_i,
\end{equation}
the transformation of time derivative and gradient is given by
\begin{equation}	\label{OpN}
(\mathcal{M} \mathbf{u})_{i} = \sum _{j=1}^n \dot{\mathbf{Y}}_j \partial _j
\mathbf{u}_i + \sum _{j,k=1}^n \Big(\Gamma _{jk}^i \dot{\mathbf{Y}}_k +
(\partial _k \mathbf{Y}_i)(\partial _j \dot{\mathbf{X}}_k)\Big)\mathbf{u}_j,
\end{equation}
and the gradient of pressure is transformed as follows:
\begin{equation}  \label{OpP}
(\mathcal{G}p)_{i}=\sum_{j=1}^{n}g^{ij}\partial _{j}p.
\end{equation}
Here we have denoted the metric covariant tensor
{
	\begin{equation}	\label{covariant}
	g_{ij}=X_{k,i}X_{k,j},\qquad
	X_{k,i}=\frac{\partial \bX_{k}}{\partial \by_{i}},
	\end{equation}
	the metric covariant tensor
	\begin{equation}
	g^{ij}=Y_{i,k}Y_{j,k}\qquad Y_{i,k}=\frac{\partial \bY_{i}}{\partial \bx_{k}},
	\end{equation}
	and the Christoffel symbol (of the second kind)
	\begin{equation}	\label{Christoffel}
	\Gamma _{ij}^{k}=\frac{1}{2}g^{kl}(g_{il,j}+g_{jl,i}-g_{ij,l}),\qquad
	g_{il,j}=\frac{\partial {g_{il}}}{\partial \by_{j}}.
	\end{equation}	
}
It is easy to observe that, in particular, the following holds:
\begin{equation*}
\Gamma _{ij}^{k}=Y_{k,l}X_{l,ij}.\qquad X_{l,ij}=\frac{\partial \bX_{l}}{
	\partial \by_{i}\partial \by_{j}}.
\end{equation*}
With little abuse of notation, we identify the operators $\mathcal{L},\mathcal{M}, \widetilde{\mathcal{N}}$ with
\begin{multline}	\label{transformed_laplace}
	\left\langle \mathcal{L}\bU, \bpsi\right\rangle
	=\int_{\Omega_F(\tau)} \big(\sum_{ijk}
	(g^{jk}\partial_j\bU_{i}\partial_k\bpsi_i
	+ g^{jk}\partial_k\bU_i\partial_i\bpsi_j)
	- \sum_{ijkl} (g^{kl}+\partial_l\bY_k)\Gamma_{li}^j\partial_k\bU_{i}\bpsi_j\\
	+ \sum_{ijkl} (g^{kl}\Gamma_{li}^j\bU_{i}\partial_k\bpsi_j
	+ g^{jl}\Gamma_{li}^k\bU_i\partial_k\bpsi_j)
	- \sum_{ijklm} (g^{kl}+\partial_k\bY_l)\Gamma_{li}^m\Gamma_{km}^j\bU_{i}\bpsi_j
	\big),
\end{multline}
\begin{equation}	\label{transformed_time}
	\left\langle \mathcal{M}\bU, \bpsi\right\rangle
	= \int_{\Omega_F(\tau)}\sum _{i=1}^n\left( 
	\sum _{j=1}^n \dot{\mathbf{Y}}_j \partial _j
	\mathbf{u}_i + \sum _{j,k=1}^n \Big(\Gamma _{jk}^i \dot{\mathbf{Y}}_k +
	(\partial _k \mathbf{Y}_i)(\partial _j \dot{\mathbf{X}}_k)\Big)\mathbf{u}_j\right)
	\bpsi_i,
\end{equation}
\begin{equation}	\label{transformed_convective}
	\left\langle \tilde{\mathcal{N}}\bU, \bpsi\right\rangle
	= \int_{\Omega_F(\tau)}
	\sum_{i,j,k=1}^n \Gamma ^i_{jk} \mathbf{u}_j\mathbf{u}_k
	\bpsi_i,
\end{equation}
for all $\bpsi\in H^{1}(\Omega_{F}(t))$.

\subsection{Reynolds transport theorem - generalization}
\label{Sec:Reynolds_gen}

To prove the weak-strong uniqueness result and the energy equality, we want to cancel the derivation terms from the weak formulations. In the case of smooth functions $\bu$ and $\bv$, by the Reynolds Transport Theorem we have
\begin{multline*}
\int_{0}^{t}\int_{\Omega(\tau)}\big (\bu\cdot\partial_t\bv + \bv\cdot\partial_t\bu\big)
= -\int_{0}^{t}\int_{\Omega(\tau)} \nabla(\bv\cdot\bu)\cdot \partial_t\bX\,
+ \int_{\Omega(t)}\bv(t)\cdot \bu(t)\,
- \int_{\Omega(0)}\bv(0)\cdot \bu(0).
\end{multline*}
However, $\bu$ and $\bv$ are not regular enough, and the expression on the left is not well defined.
But we can use Lemma \ref{Reynolds_gen}, which states that for $\bu, \bv\in {L}^{2}(0,T;{H}^1(\Omega(t)))$
and a coordinate transformation $\bX:\Omega\to\Omega(t)$, we have
\begin{multline}	\label{Reynolds_gen_eq_appendix}
\int_{0}^{t}\int_{\Omega(\tau)}\big (\bu\cdot\partial_t\bv^h + \bv\cdot\partial_t\bu^h\big)\,\d\bx\,\d\tau
\\
\to -\int_{0}^{t}\int_{\Omega(\tau)} \nabla(\bv\cdot\bu)\cdot \partial_t\bX\,\d\bx\,\d\tau
+ \int_{\Omega(t)}\bv(t)\cdot \bu(t)\,\d\bx\,
- \int_{\Omega(0)}\bv(0)\cdot \bu(0)\,\d\bx,
\end{multline}
when $h\to 0$, for almost every $t\in[0,T]$. Here $\bu^h$ denotes the regularization of $\bu$ described by \eqref{RegDef1}-\eqref{RegDef3}.

\begin{remark}
	If the domain is fixed, the coordinate transformation $\bX$ is not necessary ($\bX=id$) and the regularization is standard (convolution in time). Then, by Fubini's theorem and the properties of the mollifier, we get
	\begin{align}
	\int_{0}^{t}\int_{\Omega}&\bu(\tau)\cdot \partial_t\bv^h(\tau)\,\d\bx \,\d\tau
	= \int_{0}^{t}\int_{\Omega}\int_{-\infty}^{+\infty}
	\bu(\tau)\cdot \frac{d}{d\tau} j_h(\tau-s)\bv(s)\,\, \d s \, \d\bx \, \d\tau
	\notag\\
	=& \int_{-\infty}^{+\infty}\int_{\Omega}\int_{0}^{t}
	\frac{d}{d\tau}j_h(\tau-s)\bu(\tau)\cdot \bv(s)\,\, \d \tau \, \d\bx \, \d s
	\notag\\
	=& -\int_{0}^{t}\int_{\Omega}\int_{-\infty}^{+\infty}
	\frac{d}{ds} j_h(s-\tau)\bu(\tau)\cdot \bv(s)\,\, \d\tau \, \d\bx \, \d s
	\label{pi1}
	\\
	&- \int_{-\infty}^{0}\int_{0}^{t}\int_{\Omega}
	\bu(\tau)\cdot \frac{d}{d\tau}j_h(\tau-s)\bv(s)\,\, \d \tau \, \d\bx \, \d s
	\label{pi2}\\
	&- \int_{t}^{+\infty}\int_{0}^{t}\int_{\Omega}
	\bu(\tau)\cdot \frac{d}{d\tau}j_h(\tau-s)\bv(s)\,\, \d \tau \, \d\bx \, \d s
	\label{pi3}\\
	&+ \int_{0}^{t}\int_{-\infty}^{0}\int_{\Omega}
	\bu(\tau)\cdot \frac{d}{d\tau}j_h(\tau-s)\bv(s)\,\, \d \tau \, \d\bx \, \d s
	\label{pi4}\\
	&+ \int_{0}^{t}\int_{t}^{+\infty}\int_{\Omega}
	\bu(\tau)\cdot \frac{d}{d\tau}j_h(\tau-s)\bv(s)\,\, \d \tau \, \d\bx \, \d s
	\label{pi5}
	\end{align}
	We see that
	$$
	\eqref{pi1} = -\int_{0}^{t}d\tau\int_{\Omega}\partial_t\bu^h(\tau)\cdot \bv(\tau)\,\,\d\bx
	$$
	and it is easy to prove, by using the Lebesgue differentiation theorem, that
	$$
	\eqref{pi2}
	+\eqref{pi3}
	+\eqref{pi4}
	+\eqref{pi5}
	\to
	\int_{\Omega}\bu(t)\cdot \bv(t)\,\,\d\bx
	- \int_{\Omega}\bu(0)\cdot \bv(0)\,\,\d\bx,
	\qquad h\to 0,
	$$
	which ends the proof.
	
	In the case of a moving domain the idea of the proof is the same, but the calculation is more complicated because of the changes of variables in the definition of the regularization and before applying Fubini's theorem.
\end{remark}

First we introduce one auxiliary result.
\begin{lemma}	\label{weak_convergence}
	Let $\bu,\bv$ and $\bX$ be as in Lemma \ref{Reynolds_gen}, and let $\bY$ be the inverse transformation $\bY(t,\cdot)=\bX(t,\cdot)^{-1}$.
	Then,
	\begin{equation*}
	\int_{0}^{t}\int_{\Omega(\tau)} \bv\cdot\partial_t \bu^h \,\d\bx\,\d\tau
	-\int_{0}^{t}\int_{\Omega(\tau)} \bv\cdot\partial_t\bU_h \,\d\bx\,\d\tau
	\,\to\, 0,
	\qquad h\to 0,
	\end{equation*}
	where
	\begin{equation*}
	\bU_h(t,\bx) =
	\nabla \bY(t,\bx)^T
	\int_{-\infty}^{+\infty}
	j_h(t-\tau)\nabla \bX(\tau,\bY(t,\bx))^T \nabla \bX(\tau,\bY(t,\bx))\bar{\bu}(\tau,\bY(t,\bx))
	\,\,\d\tau.
	\end{equation*}
\end{lemma}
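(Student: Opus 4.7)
I would first pass to the Lagrangian reference domain $\Omega$ via the volume-preserving change of variables $\bx = \bX(\tau, \by)$ from \eqref{volumePreserving}. Using $\nabla \bY(\tau, \bX(\tau, \by)) = \nabla \bX(\tau, \by)^{-1}$, direct computation yields $\bu^h(\tau, \bX(\tau, \by)) = \nabla \bX(\tau, \by)\, \bar{\bu}^h(\tau, \by)$ and $\bU_h(\tau, \bX(\tau, \by)) = \nabla \bX(\tau, \by)^{-T} (M\bar{\bu})^h(\tau, \by)$, where $M(\sigma, \by) := \nabla \bX(\sigma, \by)^T \nabla \bX(\sigma, \by)$ and $(f)^h(\tau, \cdot) := \int j_h(\tau - s)\, f(s, \cdot)\, ds$. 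Subtracting and factoring,
\[
(\bu^h - \bU_h)(\tau, \bX(\tau, \by)) = \nabla \bX(\tau, \by)^{-T}\, \mathbf{R}_h(\tau, \by),
\]
\[
\mathbf{R}_h(\tau, \by) := M(\tau,\by)\bar{\bu}^h(\tau,\by) - (M \bar{\bu})^h(\tau,\by) = \int j_h(\tau - s)\,[M(\tau, \by) - M(s, \by)]\, \bar{\bu}(s, \by)\, ds,
\]
which is precisely the Friedrichs commutator between multiplication by $M$ and time-mollification.

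Since $\ba, \bomega \in L^{\infty}(0,T)$ and $\bX$ is smooth in space, $M$ is Lipschitz in time uniformly in $\by$, so $|M(\tau,\by) - M(s,\by)| \leq C|\tau - s| \leq Ch$ on the support of $j_h(\tau - \cdot)$. Young's convolution inequality then gives $\|\mathbf{R}_h\|_{L^2(0,T; H^1(\Omega))} \leq Ch\, \|\bar{\bu}\|_{L^2(0,T; H^1(\Omega))}$, hence $\|\bu^h - \bU_h\|_{L^2(0,T; H^1(\Omega(t)))} = O(h)$. Applying the Lagrangian chain rule $[\partial_\tau f](\tau,\bX(\tau,\by)) = \partial_\tau[f(\tau,\bX(\tau,\by))] - \nabla_x f(\tau,\bX(\tau,\by)) \cdot \partial_\tau \bX(\tau,\by)$ with $f = \bu^h - \bU_h$ decomposes $E_h := \int_0^t\!\int_{\Omega(\tau)} \bv \cdot \partial_\tau(\bu^h - \bU_h)\,d\bx\, d\tau$ into two pieces; the piece involving the spatial gradient of $\bu^h - \bU_h$ is immediately $O(h)$ from the bound above combined with $\partial_\tau \bX \in L^{\infty}$.

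The main obstacle is the remaining piece, essentially controlled by $\partial_\tau \mathbf{R}_h = M'(\tau,\by)\bar{\bu}^h(\tau,\by) + \int j_h'(\tau - s)\,[M(\tau,\by) - M(s,\by)]\, \bar{\bu}(s,\by)\, ds$; the second summand is delicate because $\|j_h'\|_{L^1} \sim 1/h$, so the naive bound is only $O(1)$. The key observation is that the two summands cancel in the limit: substituting $\sigma = \tau - s$ and expanding $M(\tau,\by) - M(\tau - \sigma,\by) = \sigma\, M'(\tau,\by) + O(\sigma^2)$, the second summand rewrites as $M'(\tau,\by)\,(K_h \ast \bar{\bu})(\tau,\by) + \varepsilon_h(\tau,\by)$, where $K_h(\sigma) := \sigma\, j_h'(\sigma)$ has uniformly bounded $L^1$-norm, total mass $\int K_h = -1$, and acts as a $\delta$-like approximate identity, while $\|\varepsilon_h\|_{L^2(0,T;L^2(\Omega))} = O(h)$ by the quadratic remainder bound combined with Young's inequality. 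Standard mollifier theory yields $K_h \ast \bar{\bu} \to -\bar{\bu}$ strongly in $L^2(0,T;L^2(\Omega))$; together with $\bar{\bu}^h \to \bar{\bu}$, this produces $\partial_\tau \mathbf{R}_h \to 0$ in $L^2(0,T; L^2(\Omega))$ by the exact cancellation $M'(\tau)\bar{\bu} - M'(\tau)\bar{\bu} = 0$. Pairing with the uniformly bounded factor $\nabla \bX^{-T}$ and with $\bar{\bv}_L := \bv \circ \bX \in L^2(0,T;L^2(\Omega))$ then gives $E_h \to 0$, completing the sketch.
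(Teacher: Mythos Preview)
Your identification of the difference as a Friedrichs-type commutator $\mathbf{R}_h$ is correct and matches the paper's $f_h$ (up to the smooth factor $\nabla\bX^{-T}$). The gap is in your treatment of $\partial_\tau\mathbf{R}_h$. You invoke the expansion $M(\tau,\by)-M(\tau-\sigma,\by)=\sigma M'(\tau,\by)+O(\sigma^2)$ to split off $\varepsilon_h$ with $\|\varepsilon_h\|_{L^2L^2}=O(h)$, but this quadratic remainder requires $M\in W^{2,\infty}$ in time, i.e.\ $\partial_t^2\nabla\bX$ bounded. The hypotheses of Lemma~\ref{Reynolds_gen} (see \eqref{partialDerivatives} and Appendix~\ref{Sec:LT}) only give $\partial_t\bX\in L^\infty$, because $\ba,\bomega\in L^\infty(0,T)$ with no further time regularity; hence $M$ is merely Lipschitz in $t$ and your $O(\sigma^2)$ bound is unavailable. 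Without it, the naive estimate on the $j_h'$ term is only $O(1)$, and your cancellation argument does not close as written.

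The paper sidesteps this entirely: rather than proving \emph{strong} convergence $\partial_\tau f_h\to 0$ in $L^2L^2$, it observes that $f_h\to 0$ strongly (from the Lipschitz bound $|M(\tau)-M(s)|\leq Ch$ alone) and that $\partial_\tau f_h$ is \emph{bounded} in $L^2L^2$ (since $|j_h'(\tau-s)|\cdot|M(\tau)-M(s)|\leq C$, again needing only $M'\in L^\infty$). These two facts force $\partial_\tau f_h\rightharpoonup 0$ weakly in $L^2L^2$, which is all that is needed when pairing against $\bv\in L^2L^2$. Your argument can be repaired in exactly this way---drop the Taylor expansion, keep your $O(h)$ bound on $\mathbf{R}_h$ in $L^2H^1$ (which already handles the spatial-gradient piece from the chain rule), and for the remaining piece show only that $\partial_\tau\mathbf{R}_h$ is bounded in $L^2L^2$; then invoke weak convergence. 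This is precisely the paper's mechanism, and it requires no second time derivative of $\bX$.
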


\begin{proof}
	Since
	\begin{align}
	\int_{0}^{t}&\int_{\Omega(\tau)} \bv\cdot\partial_t\bu^h
	\notag\\
	&= \int_{0}^{t}\int_{\Omega(\tau)} \bv(\tau,\bx)\cdot\frac{d}{d\tau}\Big (
	\int_{-\infty}^{+\infty}
	j_h(\tau-s)
	\nabla\bX(\tau,\bY(\tau,\bx))\,
	\bar{\bu}(s,\bY(\tau,\bx))
	\,\,\d s
	\Big ) \,\d\bx\,\d\tau
	\notag\\
	&= \int_{0}^{t}\int_{\Omega(\tau)} \bv(\tau,\bx)\cdot\frac{d}{d\tau}\Big ( \nabla\bY(\tau,\bx)^T
	\notag\\
	&\qquad\qquad
	\int_{-\infty}^{+\infty}
	j_h(\tau-s)\nabla\bX(\tau,\bY(\tau,\bx))^T\nabla\bX(\tau,\bY(\tau,\bx))\bar{\bu}(s,\bY(\tau,\bx))
	\,\,\d s
	\Big ) \,\d\bx\,\d\tau,
	\notag
	\end{align}
	it follows
	\begin{equation*}
	\int_{0}^{t}\int_{\Omega(\tau)} \bv\cdot\partial_t \bu^h \,\d\bx\,\d\tau
	-\int_{0}^{t}\int_{\Omega(\tau)} \bv\cdot\partial_t\bU_h \,\d\bx\,\d\tau
	= \int_{0}^{t}\int_{\Omega(\tau)} \bv(\tau,\bx)\cdot\frac{d}{d\tau} f_h(\tau,\bx) \,\d\bx\,\d\tau,
	\end{equation*}
	where
	\begin{multline*}
	f_h(\tau,\bx) = \nabla\bY(\tau,\bx)^T
	\int_{-\infty}^{+\infty}
	j_h(\tau-s)
	\\
	\big(\nabla\bX(\tau,\bY(\tau,\bx))^T\nabla\bX(\tau,\bY(\tau,\bx))
	- \nabla\bX(s,\bY(\tau,\bx))^T\nabla\bX(s,\bY(\tau,\bx))\big)
	\\
	\bar{\bu}(s,\bY(\tau,\bx))
	\,\d s.
	\end{multline*}
	Since $f_h\to 0$ strongly in $\mathbf{L}^2 \mathbf{L}^2$ and the derivatives $\frac{d}{d\tau}f_h$ are bounded in $\mathbf{L}^2 \mathbf{L}^2$, it follows that $\frac{d}{d\tau}f_h\to 0$ weakly in $\mathbf{L}^2 \mathbf{L}^2$, so the above expression tends to 0 when $h\to 0$.	
\end{proof}

Now we are able to prove Lemma \ref{Reynolds_gen}.

\begin{proof}[Proof of Lemma \ref{Reynolds_gen}]
	As in the fixed domain case, we start with the first term on the left-hand side of \eqref{Reynolds_gen_eq}:
	\begin{align}
	\int_{0}^{t}\int_{\Omega(\tau)}\bu\cdot \partial_t\bv^h&\,\,\d\bx\,\d\tau
	=
	\int_{0}^{t}\int_{\Omega(\tau)}\bu(\tau,\bx)\cdot \frac{d}{d\tau} \Big(\nabla \bX(\tau,\bY(\tau,\bx))\,\bar{\bv}^h(\tau,\bY(\tau,\bx))\Big)\,\d\bx\,\d\tau
	\notag \\
	& =
	\int_{0}^{t}\int_{\Omega(\tau)}
	\bu(\tau,\bx)\cdot \frac{d}{d\tau} \nabla \bX(\tau,\bY(\tau,\bx))\,\bar{\bv}^h(\tau,\bY(\tau,\bx)
	\,\,\d\bx\,\d\tau
	\label{E1}
	\\
	& +
	\int_{0}^{t}\int_{\Omega(\tau)}
	\bu(\tau,\bx)\cdot \nabla \bX(\tau,\bY(\tau,\bx))\,\partial_t\bar{\bv}^h(\tau,\bY(\tau,\bx))
	\,\,\d\bx\,\d\tau
	\label{E2}
	\\
	& +
	\int_{0}^{t}\int_{\Omega(\tau)}
	\bu(\tau,\bx)\cdot \nabla \bX(\tau,\bY(\tau,\bx))\nabla\bar{\bv}^h(\tau,\bY(\tau,\bx))\,\partial_t \bY(\tau,\bx)
	\,\,\d\bx\,\d\tau
	\label{E3}
	\end{align}
	The integral \eqref{E2} contains the time derivative of the function $\bar{\bv}^h$, so we need to combine it with the second term on the left-hand side of \eqref{Reynolds_gen_eq} before passing to the limit.
	First we change the coordinates. Then we can apply Fubini's theorem, as follows.
	\begin{align}
	\int_{0}^{t}\int_{\Omega(\tau)}&
	\bu(\tau,\bx)\cdot \nabla \bX(\tau,\bY(\tau,\bx))\,\partial_t\bar{\bv}^h(\tau,\bY(\tau,\bx))
	\,\d\bx\,\d\tau
	\notag\\
	&=
	\int_{0}^{t}\int_{\Omega}
	\bu(\tau,\bX(\tau,\by))\cdot \nabla \bX(\tau,\by)\,\partial_t\bar{\bv}^h(\tau,\by)
	\,\d\by\,\d\tau
	\notag\\
	&=
	\int_{0}^{t}\int_{\Omega}\int_{-\infty}^{+\infty}
	\frac{d}{d\tau}j_h(\tau-s)
	\nabla \bX(\tau,\by)\bar{\bu}(\tau,\by)
	\cdot \nabla \bX(\tau,\by)\bar{\bv}(s,\by)
	\,\d s\,\d\by\,\d\tau
	\notag\\
	&=
	-\int_{-\infty}^{+\infty}\int_{\Omega}
	\bar{\bv}(s,\by)\cdot
	\int_{0}^{t}
	\frac{d}{ds}j_h(s-t)
	\nabla \bX(t,\by)^T \nabla \bX(t,\by)\bar{\bu}(t,\by)
	\,\d\tau\,\d\by \, \d s
	\notag\\
	&=
	-\int_{0}^{t}\int_{\Omega}
	\bar{\bv}(s,\by)\cdot
	\int_{-\infty}^{+\infty}
	\frac{d}{ds}j_h(s-t)
	\nabla \bX(t,\by)^T \nabla \bX(t,\by)\bar{\bu}(t,\by)
	\,\d\tau\,\d\by \, \d s
	\label{E4}\\
	&\quad + \int_{\Omega(t)}\bv(t,\bx)\cdot \bu(t,\bx)\,\d\bx
	- \int_{\Omega}\bv(0,\bx)\cdot \bu(0,\bx)\,\d\bx
	+ o(h).
	\notag
	\end{align}
	The last two equalities are simple consequences of the properties of the mollifier and Lebesgue differentiation theorem (as in the fixed domain case). Next, we calculate \eqref{E4}:
	\begin{align}
	\eqref{E4}
	&=
	-\int_{0}^{t}\int_{\Omega(s)}
	\bar{\bv}(s,\bY(s,\bx))\cdot
	\int_{-\infty}^{+\infty}
	\frac{d}{ds}j_h(s-\tau)
	\notag\\
	&\qquad\qquad\qquad\qquad
	\nabla \bX(\tau,\bY(s,\bx))^T \nabla \bX(\tau,\bY(s,\bx))
	\bar{\bu}(\tau,\bY(s,\bx))
	\,\d\tau\,\d\bx \, \d s
	\notag\\
	&=
	-\int_{0}^{t}\int_{\Omega(s)}
	\bar{\bv}(s,\bY(s,\bx))\cdot
	\frac{d}{ds}\Big (\int_{-\infty}^{+\infty}
	j_h(s-\tau)
	\label{E5}\\
	&\qquad\qquad\qquad\qquad
	\nabla\bX(\tau,Y(s,\bx))^T \nabla \bX(\tau,\bY(s,\bx))\bar{\bu}(\tau,\bY(s,\bx))
	\,\,\d\tau\Big )\,\d\bx \, \d s
	\notag\\
	& +\int_{0}^{t}\int_{\Omega(s)}
	\bar{\bv}(s,\bY(s,\bx))\cdot
	\int_{-\infty}^{+\infty}
	j_h(s-\tau)\frac{d}{ds}\big (\nabla \bX(\tau,\bY(s,\bx))^T\big )
	\label{E7}\\
	&\qquad\qquad\qquad\qquad 
	\nabla \bX(\tau,\bY(s,\bx))\bar{\bu}(\tau,\bY(s,\bx))
	\,\,\d\tau\,\d\bx \, \d s
	\notag
	\\
	& +\int_{0}^{t}\int_{\Omega(s)}
	\bar{\bv}(s,\bY(s,\bx))\cdot
	\int_{-\infty}^{+\infty}
	j_h(s-\tau)
	\nabla \bX(\tau,\bY(s,\bx))^T
	\label{E9}\\
	&\qquad\qquad\qquad\qquad
	\frac{d}{ds}\big (\nabla \bX(\tau,\bY(s,\bx)) \bar{\bu}(\tau,\bY(s,\bx))\big )
	\,\,\d\tau\,\d\bx \, \d s,
	\notag
	\end{align}
	and for \eqref{E5} we get
	\begin{align}
	&\eqref{E5}
	\notag\\
	&=
	-\int_{0}^{t}\int_{\Omega(s)}
	\bv(s,\bx)\cdot \nabla \bY(s,\bx)^T
	\notag\\
	&\qquad\qquad
	\frac{d}{ds}\Big (\int_{-\infty}^{+\infty}
	j_h(s-\tau)\nabla \bX(\tau,\bY(s,\bx))^T \nabla \bX(\tau,\bY(s,\bx))\bar{\bu}(\tau,\bY(s,\bx))
	\,\,\d\tau\Big )\,\d\bx \, \d s
	\notag\\
	&=
	-\int_{0}^{t}\int_{\Omega(s)}
	\bv(s,\bx)\cdot
	\frac{d}{ds}
	\Big (\nabla \bY(s,\bx)^T
	\notag\\
	&\qquad\qquad
	\int_{-\infty}^{+\infty}
	j_h(s-\tau)\nabla \bX(\tau,\bY(s,\bx))^T \nabla \bX(\tau,\bY(s,\bx))\bar{\bu}(\tau,\bY(s,\bx))
	\,\,\d\tau\Big )\,\d\bx \, \d s
	\label{E8}\\
	&+ \int_{0}^{t}\int_{\Omega(s)}
	\bv(s,\bx)\cdot
	\partial_s\nabla \bY(s,\bx)^T
	\notag\\
	&\qquad\qquad
	\int_{-\infty}^{+\infty}
	j_h(s-\tau)\nabla \bX(\tau,\bY(s,\bx))^T \nabla \bX(\tau,\bY(s,\bx))\bar{\bu}(\tau,\bY(s,\bx))
	\,\,\d\tau\,\d\bx \, \d s
	\label{E6}
	\end{align}
	Now we can let $h\to 0$. By Lemma \ref{weak_convergence}, we have
	\begin{equation*}
	\eqref{E8} + \int\int \bv\cdot\partial_t\bu^h \to 0.
	\end{equation*}
	The remaining terms do not contain the time derivative of $\bar{\bu}^h$ or $\bar{\bv}^h$, so we can directly pass to the limits.
	Using the property of the transformation $\bX$
	\begin{align*}
	0&=\frac{d}{dt}\big ( \nabla \bX(t,\bY(t,\bx))\nabla \bY(t,\bx) \big )\\
	&= \frac{d}{dt}\big (\nabla \bX(t,\bY(t,\bx)) \big )\nabla \bY(t,\bx) + \nabla \bX(t,\bY(t,\bx))\partial_t\nabla \bY(t,\bx),
	\end{align*}
	we get
	\begin{align}
	\eqref{E1}
	&= \int_{0}^{t} \int_{\Omega(\tau)}
	\bu(\tau,\bx)\cdot \frac{d}{d\tau} \nabla \bX(\tau,\bY(\tau,\bx))\,\bar{\bv}^h(\tau,\bY(\tau,\bx))
	\,\d\bx\,\d\tau
	\notag\\
	&\to \int_{0}^{t}\int_{\Omega(\tau)}
	\bu(\tau,\bx)\cdot \frac{d}{d\tau} \nabla \bX(\tau,\bY(\tau,\bx))\,\bar{\bv}(\tau,\bY(\tau,\bx))
	\,\d\bx\,\d\tau
	\notag\\
	&=-\int_{0}^{t}\int_{\Omega(\tau)}
	\bu(\tau,\bx)\cdot \nabla \bX(\tau,\bY(\tau,\bx))\,\partial_t\nabla \bY(\tau,\bx)\,\bv(\tau,\bY(\tau,\bx))
	\,\d\bx\,\d\tau,
	\notag
	\end{align}
	\begin{align}
	\eqref{E6}
	&= \int_{0}^{t}\int_{\Omega(s)}
	\bv(s,\bx)\cdot
	\partial_s\nabla \bY(s,\bx)^T
	\notag\\
	&\qquad\qquad
	\int_{-\infty}^{+\infty}
	j_h(s-\tau)\nabla \bX(\tau,\bY(s,\bx))^T \nabla \bX(\tau,\bY(s,\bx))\,\bar{\bu}(\tau,\bY(s,\bx))
	\,\d\tau\,\d\bx \, \d s
	\notag\\
	&\to \int_{0}^{t}\int_{\Omega(s)}
	\bv(s,\bx)\cdot
	\partial_s\nabla \bY(s,\bx)^T
	\notag\\
	&\qquad\qquad
	\nabla \bX(s,\bY(s,\bx))^T \nabla \bX(s,\bY(s,\bx))\,
	\bar{\bu}(s,\bY(s,\bx))
	\,\d\bx \, \d s
	\notag\\
	&= \int_{0}^{t}\int_{\Omega(s)}
	\nabla \bX(s,\bY(s,\bx)) \partial_s\nabla \bY(s,\bx)\,\bv(s,\bx)\cdot \bu(s,\bY(s,\bx))
	\,\d\bx \, \d s.
	\notag
	\end{align}
	It follows that
	\begin{displaymath}
	\eqref{E1}+\eqref{E6}\to 0, \qquad h\to 0.
	\end{displaymath}
	Again, using the properties of the transformation of coordinates we get
	\begin{equation}\label{L1}
	\eqref{E3}+\eqref{E7}
	\to -\int_{0}^{t}\int_{\Omega(\tau)} \nabla\bv^T\bu\cdot \partial_t\bX,
	\end{equation}
	\begin{equation}\label{L2}
	\eqref{E9}
	\to -\int_{0}^{t}\int_{\Omega(\tau)} \nabla\bu^T\bv\cdot \partial_t\bX.
	\end{equation}
	Hence,
	\begin{equation}
	\begin{split}
	&\eqref{E1} + \eqref{E3} + \eqref{E7} + \eqref{E9} + \eqref{E6}
	\\
	&\qquad
	\to -\int_{0}^{t}\int_{\Omega_F(\tau)} (\nabla\bv^T\bu+\nabla\bu^T\bv)\cdot \partial_t\bX
	= -\int_{0}^{t}\int_{\Omega_F(\tau)} \nabla(\bu\cdot\bv)\cdot \partial_t\bX.
	\end{split}
	\end{equation}
	Finally, we conclude
	\begin{align*}
	&\int_{0}^{t}\int_{\Omega(\tau)}\bu\cdot \partial_t\bv^h\,\,\d\bx\,\d\tau
	+ \int_{0}^{t}\int_{\Omega(\tau)}\bv\cdot \partial_t\bu^h
	\,\d\bx\,\d\tau
	\\
	&\qquad\qquad
	\to
	-\int_{0}^{t}\int_{\Omega(\tau)} \nabla(\bu\cdot\bv)\cdot \partial_t\bX
	\,\d\bx\,\d\tau
	+ \int_{\Omega(t)}\bv(t)\cdot \bu(t)
	\,\d\bx
	- \int_{\Omega}\bv(0)\cdot \bu(0)\,\d\bx.
	\notag
	\end{align*}
	Let us show \eqref{L1} and \eqref{L2}:
	
	\noindent
	Since
	$$
	\nabla\bar{\bv}^h \to \nabla\bar{\bv}
	\quad\text{when }h\to 0 \quad\text{u } L^2L^2,
	$$
	we have
	\begin{align*}
	\eqref{E3}
	&= \int_0^t\int_{\Omega_F(\tau)} \bu(\tau,\bx)\cdot \nabla\bX(\tau,\bY(\tau,\bx))\nabla\bar{\bv}^h(\tau,\bY(\tau,\bx))\partial_t\bY(\tau,\bx)\,\d\bx\,\d\tau
	\notag\\
	&\to \int_0^t\int_{\Omega_F(\tau)} \bu(\tau,\bx)\cdot \nabla\bX(\tau,\bY(\tau,\bx))\nabla\bar{\bv}(\tau,\bY(\tau,\bx))\partial_t\bY(\tau,\bx)\,\d\bx\,\d\tau
	\notag\\
	&= \int_0^t\int_{\Omega_F(\tau)}
	\sum_{ijk}\bu_i\partial_j\bX_i\partial_k\bar{\bv}\partial_t\bY_k
	= \int_0^t\int_{\Omega_F(\tau)}
	\sum_{ijk}\bu_i\partial_j\bX_i\frac{d}{d\by_k}(\nabla\bY\bv)_j\partial_t\bY_k
	\notag\\
	&= \int_0^t\int_{\Omega_F(\tau)}
	\sum_{ijkl}\bu_i\partial_j\bX_i\frac{d}{d\by_k}(\partial_l\bY_j\bv_l)\partial_t\bY_k
	\notag\\
	&= \int_0^t\int_{\Omega_F(\tau)}
	\sum_{ijklm}\bu_i\partial_j\bX_i(\partial_m\partial_l\bY_j\bv_l + \partial_l\bY_j\partial_m\bv_l)\underbrace{\partial_k\bX_m\partial_t\bY_k}_{(\nabla\bX\partial_t\bY)_{m}=-\partial_t\bX_m}
	\notag\\
	&= \int_0^t\int_{\Omega_F(\tau)}(
	\underbrace{\sum_{ijklm}\bu_i\partial_j\bX_i\partial_m\partial_l\bY_j\partial_k\bX_m\partial_t\bY_k \bv_l}_{I}
	- \underbrace{\sum_{ijlm}\bu_i\partial_j\bX_i\partial_l\bY_j\partial_t\bX_m \partial_m\bv_l}_{II}),
	\end{align*}
	\begin{align*}
	II &= \sum_{ijlm}\bu_i
	\underbrace{\partial_j\bX_i\partial_l\bY_j}_{\delta_{il}}
	\partial_t\bX_m \partial_m\bv_l
	= \sum_{im}\bu_i\partial_t\bX_m \partial_m\bv_i
	= \nabla\bv^T\bu\cdot\partial_t\bX,
	\end{align*}
	\begin{align*}
	I &= \sum_{ijklm}\bu_i\partial_j\bX_i
	\underbrace{\partial_m\partial_l\bY_j\partial_k\bX_m}_{\frac{d}{d\by_k}(\partial_l\bY_j)}
	\partial_t\bY_k \bv_l
	= \sum_{ijkl}\bu_i
	\underbrace{\partial_j\bX_i\frac{d}{d\by_k}(\partial_l\bY_j)}_{= \frac{d}{d\by_k}(\partial_j\bX_i\partial_l\bY_j)-\partial_k\partial_j\bX_i\partial_l\bY_j}
	\partial_t\bY_k \bv_l
	\notag\\
	&= -\sum_{ijkl}\bu_i
	\underline{\partial_k\partial_j\bX_i}\partial_l\bY_j
	\underline{\partial_t\bY_k} \bv_l
	=(*)
	= -\sum_{ijl}\bu_i
	(\frac{d}{dt}(\nabla\bX)_{ij}-\partial_t(\nabla\bX)_{ij})
	\partial_l\bY_j
	\bv_l
	\notag\\
	&= -\frac{d}{dt}\nabla\bX^T\bu\cdot\nabla\bY\bv
	+ \partial_t\nabla\bX^T\bu\cdot\nabla\bY\bv,
	\end{align*}
	\begin{equation}
	\eqref{E3}\to \int_{0}^{t}\int_{\Omega_F(\tau)}(-\frac{d}{dt}\nabla\bX^T\bu\cdot\nabla\bY\bv
	+ \partial_t\nabla\bX^T\bu\cdot\nabla\bY\bv
	- \nabla\bv^T\bu\cdot\partial_t\bX),
	\end{equation}	
	\begin{align*}
	\eqref{E7}
	&= \int_{0}^{t}\int_{\Omega_F(\tau)}
	\underbrace{\bar{\bv}(\tau,\bY(\tau,\bx))}_{=\nabla\bY(\tau,\bx)\bv(\tau,\bx)}
	\cdot
	\int_{-\infty}^{+\infty}
	j_h(\tau-s)\frac{d}{d\tau}\big (\nabla \bX(s,\bY(\tau,\bx))^T\big ) 
	\notag\\
	&\qquad\qquad\qquad\qquad\qquad\qquad\qquad
	\nabla\bX(s,\bY(\tau,\bx))\bar{\bu}(s,\bY(\tau,\bx))
	\,\,\d s\,\d\bx \, \d\tau
	\notag\\
	&= \int_{0}^{t}\int_{\Omega_F(\tau)}\int_{-\infty}^{+\infty}
	\sum_{ijkl}
	\bv_i\partial_i\bY_j j_h(\tau-s)\frac{d}{d\tau}\partial_j\bX_k(s,\bY(\tau,\bx))
	\notag\\
	&\qquad\qquad\qquad\qquad\qquad\qquad\qquad
	\partial_l\bX_k(s,\bY(\tau,\bx))\bar{\bu}_l(s,\bY(\tau,\bx))
	\,\,\d s\,\d\bx \, \d\tau
	\notag\\
	&= \int_{0}^{t}\int_{\Omega_F(\tau)}\int_{-\infty}^{+\infty}
	\sum_{ijklm}
	\bv_i\partial_i\bY_j j_h(\tau-s)\partial_m\partial_j\bX_k(s,\bY(\tau,\bx))\partial_t\bY_m
	\notag\\
	&\qquad\qquad\qquad\qquad\qquad\qquad\qquad \partial_l\bX_k(s,\bY(\tau,\bx))\bar{\bu}_l(s,\bY(\tau,\bx))
	\,\,\d s\,\d\bx \, \d\tau
	\notag\\
	&\to \int_{0}^{t}\int_{\Omega_F(\tau)}
	\sum_{ijklm}
	\bv_i\partial_i\bY_j
	\underbrace{\partial_m\partial_j\bX_k\partial_t\bY_m }_{\frac{d}{dt}(\nabla\bX)_{kj}-\partial_t(\nabla\bX)_{kj}}
	\underbrace{\partial_l\bX_k\bar{\bu}_l}_{\bu_k}
	\,\,\d s\,\d\bx \, \d\tau
	\notag\\
	&=-(*)=-\int\int I
	= \int_{0}^{t}\int_{\Omega_F(\tau)}
	(\nabla\bY\bv\cdot\frac{d}{dt}\nabla\bX^T\bu
	-\nabla\bY\bv\cdot\partial_t\nabla\bX^T\bu)
	\,\d\bx \, \d\tau.
	\end{align*}
	It follows
	\begin{displaymath}
	\eqref{E3}+\eqref{E7}
	\to -\int_{0}^{t}\int_{\Omega_F(\tau)} \nabla\bv^T\bu\cdot \partial_t\bX.
	\end{displaymath}
	It remains to prove \eqref{E9}:
	\begin{align*}
	\eqref{E9}
	&= \int_{0}^{t}\int_{\Omega_F(\tau)}
	\bar{\bv}(\tau,\bY(\tau,\bx))\cdot
	\int_{-\infty}^{+\infty}
	j_h(\tau-s)\nabla \bX(s,\bY(\tau,\bx))^T
	\notag\\
	&\qquad\qquad\qquad\qquad\qquad\qquad
	\frac{d}{d\tau}\big (\nabla \bX(s,\bY(\tau,\bx)) \bar{\bu}(s,\bY(\tau,\bx))\big )
	\,\,\d s\,\d\bx \, \d\tau
	\notag\\
	&= \int_{0}^{t}\int_{\Omega_F(\tau)}
	\sum_{ijk} \bar{\bv}_i \int_{-\infty}^{+\infty}
	j_h(\tau-s)\partial_i\bX_j\frac{d}{d\tau}(\partial_k\bX_j\bar{\bu}_k)
	\notag\\
	&= \int_{0}^{t}\int_{\Omega_F(\tau)}
	\sum_{ijkl} \bar{\bv}_i \int_{-\infty}^{+\infty}
	j_h(\tau-s)\partial_i\bX_j
	(\partial_l\partial_k\bX_j\bar{\bu}_k + \partial_k\bX_j\partial_l\bar{\bu}_k)\partial_t\bY_l
	\notag\\
	&\to \int_{0}^{t}\int_{\Omega_F(\tau)}
	\sum_{ijkl} \bar{\bv}_i \partial_i\bX_j
	\underbrace{(\partial_l\partial_k\bX_j\bar{\bu}_k + \partial_k\bX_j\partial_l\bar{\bu}_k)}_{\frac{d}{d\by_l}(\nabla\bX\bar{\bu})_j=\frac{d}{d\by_l}\bu_j=(\nabla\bu\nabla\bX)_{jl}}
	\partial_t\bY_l
	\notag\\
	&= \int_{0}^{t}\int_{\Omega_F(\tau)}
	\bar{\bv}\cdot\nabla\bX^T
	\nabla\bu\nabla\bX
	\partial_t\bY
	= \int_{0}^{t}\int_{\Omega_F(\tau)}
	\underbrace{\nabla\bX\bar{\bv}}_{=\bv}\cdot
	\nabla\bu
	\underbrace{\nabla\bX\partial_t\bY}_{=-\partial_t\bX}
	\notag\\
	&= -\int_{0}^{t}\int_{\Omega_F(\tau)}
	\bv\cdot \nabla\bu \partial_t\bX
	= -\int_{0}^{t}\int_{\Omega_F(\tau)}
	\nabla\bu^T \bv \cdot \partial_t\bX.
	\end{align*}	
\end{proof}

\subsection{Weak formulation - details}\label{Sec:WFDEtails}
In this subsection we give the remaining technical details of the proof of Proposition \ref{WeakU2}.
For simplicity of notation we denote:
$$
\bX=\widetilde{\bX}_2, \,
\bx=\bx_2, \,
\bY=\widetilde{\bX}_1, \,
\by=\bx_1
$$
and
$$
(\bU,P,\bA,\bOmega) = (\bU_{2},P_{2},\bA_{2},\bOmega_{2}),\,
(\bu,p,\ba,\bomega) = (\bu_{2},p_{2},\ba_{2},\bomega_{2}).
$$

\noindent
\textbf{The fluid time-derivative term}.
\begin{equation*}
\bu\cdot\partial_t\bphi
= \bU\cdot\partial_t\bpsi
-\mathcal{M}\bU\cdot\bpsi
+ \nabla(\bU\cdot\bpsi)\cdot\partial_t\bY.
\end{equation*}
\begin{proof}
	We have
	\begin{align*}
	\bu\cdot\partial_t\bphi
	&= \nabla\bX\bU\cdot\frac{d}{dt}(\nabla\bY^T\bpsi)
	\notag\\
	&= \nabla\bX\bU\cdot(\partial_t\nabla\bY^T\bpsi + \nabla\bY^T\partial_t\bpsi + \nabla\bY^T\nabla\bpsi\partial_t\bY)
	\notag\\
	&= \partial_t\nabla\bY\nabla\bX\bU\cdot\bpsi
	+ \nabla\bY\nabla\bX\bU\cdot\partial_t\bpsi
	+ \nabla\bY\nabla\bX\bU\cdot\nabla\bpsi\partial_t\bY.
	\end{align*}
	The property of the transformation $\bX$
	\begin{equation}	\label{transformationIdentity}
	\nabla\bX(t,\bY(t,\bx)))\nabla\bY(t,\bx)=\mathbb{I}
	\end{equation}
	implies
	\begin{equation*}
	\bu\cdot\partial_t\bphi
	= \bU\cdot\partial_t\bpsi
	+ \underbrace{\partial_t\nabla\bY\nabla\bX\bU\cdot\bpsi }_{I}
	+ \nabla\bpsi^T\bU\cdot\partial_t\bY
	\end{equation*}
	and by definition of $\mathcal{M}$ we have
	\begin{equation*}
	\mathcal{M}\bU\cdot\bpsi
	= \underbrace{\nabla\bU\partial_t\bY\cdot\psi}_{=\nabla\bU^T\bpsi\cdot\partial_t\bY}
	+ \underbrace{\nabla\bY\partial_t\nabla\bX\bU\cdot\bpsi}_{II}
	+ \underbrace{\sum_{ijk}\Gamma_{jk}^i\partial_t\bY_k\bU_j\bpsi_i}_{III}.
	\end{equation*}
	The identity \eqref{transformationIdentity} gives
	\begin{equation*}
	0 = \frac{d}{dt}(\nabla\bY\nabla\bX)_{ij}
	= (\partial_t\nabla\bY\nabla\bX)_{ij}
	+ (\nabla\bY\partial_t\nabla\bX)_{ij}
	+ \sum_{k} \Gamma_{kj}^i\partial_t\bY_k.
	\end{equation*}
	Multiplying this equality by $\bU_j\bpsi_i$ and  summing over $ij$, we get
	\begin{equation*}
	I = -(II+III)
	= -\mathcal{M}\bU\cdot\bpsi
	+ \nabla\bU^T\bpsi\cdot\partial_t\bY.
	\end{equation*}
	Finally, we get
	\begin{align*}
	\bu\cdot\partial_t\bphi
	&= \bU\cdot\partial_t\bpsi
	-\mathcal{M}\bU\cdot\bpsi
	+ \nabla\bU^T\bpsi\cdot\partial_t\bY
	+ \nabla\bpsi^T\bU\cdot\partial_t\bY
	\notag\\
	&= \bU\cdot\partial_t\bpsi
	-\mathcal{M}\bU\cdot\bpsi
	+ \nabla(\bU\cdot\bpsi)\cdot\partial_t\bY.
	\end{align*}
	
\end{proof}

\noindent
\textbf{Convective term}.
\begin{equation*}
\bu\otimes\bu:\nabla\bphi
= \bU\otimes\bU:\nabla\bpsi^T
- \tilde{\mathcal{N}}\bU\cdot\bpsi
\end{equation*}
\begin{proof}
	We derive the convective term by using a known identity
	\begin{equation*}\label{Wu2t3}
	\bu\otimes\bu:\nabla\bphi
	= \divg((\bu\cdot\bphi)\bu)-(\bu\cdot\nabla)\bu\cdot\bphi.
	\end{equation*}
	It is easy to prove (see \cite{inoue1977existence}) that
	$$
	\nabla\bY(\bu\cdot\nabla)\bu
	=\mathcal{N}\bU,
	$$
	which implies
	\begin{equation*}\label{Wu2t4}
	(\bu\cdot\nabla)\bu\cdot\bphi
	=(\bu\cdot\nabla)\bu\cdot\nabla\bY^T\bpsi
	=\nabla\bY(\bu\cdot\nabla)\bu\cdot\bpsi
	=\mathcal{N}\bU\cdot\bpsi.
	\end{equation*}
	On the other side we conclude
	\begin{align*}\label{Wu2t5}
	\divg((\bu\cdot\bphi)\bu)
	&= \nabla(\bu\cdot\bphi)\cdot\bu
	= \nabla_x(\nabla\bX\,\bU\cdot\nabla\bY^T\bpsi)\cdot\nabla\bX\,\bU
	= \nabla_x(\bU\cdot\bpsi)\cdot\nabla\bX\,\bU
	\notag\\
	&= (\nabla\bY^T\nabla\bU^T\bpsi + \nabla\bY^T\nabla\bpsi^T\bU)\cdot\nabla\bX\,\bU
	= (\nabla\bU^T\bpsi + \nabla\bpsi^T\bU)\cdot\bU
	\notag\\
	&=\nabla(\bU\cdot\bpsi)\cdot\bU
	\notag\\
	&= \bU\cdot\nabla\bU\cdot\bpsi + \bU\otimes\bU:\nabla\psi^T.
	\end{align*}
	Therefore,
	\begin{equation*}
	\bu\otimes\bu:\nabla\bphi
	= \bU\cdot\nabla\bU\cdot\bpsi
	+ \bU\otimes\bU:\nabla\bpsi^T
	- \mathcal{N}\bU\cdot\bpsi
	= \bU\otimes\bU:\nabla\bpsi^T
	- \tilde{\mathcal{N}}\bU\cdot\bpsi.
	\end{equation*}
\end{proof}

\noindent
\textbf{Diffusive term}.
\begin{equation*}
\int_{\Omega_F(\tau)} 2\,\D\bu:\D\bphi
= \left\langle \mathcal{L}\bU, \bpsi\right\rangle,
\end{equation*}
where
\begin{multline*}
\left\langle \mathcal{L}\bU, \bpsi\right\rangle
= \int_{\Omega_F(\tau)} \big(\sum_{ijk}
(g^{jk}\partial_j\bU_{i}\partial_k\bpsi_i
+ g^{jk}\partial_k\bU_i\partial_i\bpsi_j)
- \sum_{ijkl} (g^{kl}+\partial_l\bY_k)\Gamma_{li}^j\partial_k\bU_{i}\bpsi_j\\
+ \sum_{ijkl} (g^{kl}\Gamma_{li}^j\bU_{i}\partial_k\bpsi_j
+ g^{jl}\Gamma_{li}^k\bU_i\partial_k\bpsi_j)
- \sum_{ijklm} (g^{kl}+\partial_k\bY_l)\Gamma_{li}^m\Gamma_{km}^j\bU_{i}\bpsi_j
\big).
\end{multline*}
\begin{proof}
	We have
	$$
	2\D\bu:\D\bphi
	= 2\D\bu:\nabla\bphi
	= \nabla\bu:\nabla\bphi
	+ \nabla\bu^T:\nabla\bphi
	$$
	For the first term we get
	\begin{align}
	\nabla\bu:\nabla\bphi
	&= \sum_{ij} \partial_j\bu_{i}\partial_j\bpsi_i
	= \sum_{ij} \frac{d}{d\bx_j}(\nabla\bX\bU)_i\frac{d}{d\bx_j}(\nabla\bY^T\bpsi)_i
	\notag\\
	&= \sum_{ijkl} \frac{d}{d\bx_j}(\partial_k\bX_i\bU_k)\frac{d}{d\bx_j}(\partial_i\bY_l\bpsi_l)
	\notag\\
	&= \sum_{ijklmp} (\partial_m\partial_k\bX_i\bU_k + \partial_k\bX_i\partial_m\bU_k)\underline{\partial_j\bY_m
		\partial_j\bY_p}(\frac{d}{d\by_p}\partial_i\bY_l\bpsi_l + \partial_i\bY_l\partial_p\bpsi_l)
	\notag\\
	&= \underbrace{\sum_{iklmp} g^{mp}\partial_m\partial_k\bX_i\partial_{\by_p}\partial_i\bY_l\bU_k\bpsi_l}_{I}
	+ \underbrace{\sum_{iklmp} g^{mp}\partial_m\partial_k\bX_i\partial_i\bY_l\bU_k\partial_p\bpsi_l}_{II}
	\notag\\
	&\quad +\underbrace{\sum_{iklmp} g^{mp}\partial_k\bX_i\partial_{\by_p}\partial_i\bY_l\partial_m\bU_k\bpsi_l}_{III}
	+ \underbrace{\sum_{iklmp} g^{mp}\partial_k\bX_i\partial_i\bY_l\partial_m\bU_k\partial_p\bpsi_l}_{IV},
	\notag
	\end{align}
	\begin{align*}
	II &= \sum_{klmp} g^{mp}(\sum_{i}\partial_m\partial_k\bX_i\partial_i\bY_l)\bU_k\partial_p\bpsi_l
	= \sum_{klmp} g^{mp}\Gamma_{mk}^l\bU_k\partial_p\bpsi_l,
	\end{align*}
	\begin{align*}
	IV &= \sum_{klmp} g^{mp}(\sum_{i}\partial_k\bX_i\partial_i\bY_l)\partial_m\bU_k\partial_p\bpsi_l
	= \sum_{kmp} g^{mp}\partial_m\bU_k\partial_p\bpsi_k,
	\end{align*}
	\begin{align*}
	III &= \sum_{iklmp} g^{mp}(\partial_p(\partial_k\bX_i\partial_i\bY_l) - \partial_p\partial_k\bX_i\partial_i\bY_l)\partial_m\bU_k\bpsi_l
	= -\sum_{klmp} g^{mp}\Gamma_{pk}^l\partial_m\bU_k\bpsi_l,
	\end{align*}
	\begin{align*}
	I &= \sum_{iklmp} g^{mp}(\partial_p(\partial_m\partial_k\bX_i\partial_i\bY_l) - \partial_p\partial_m\partial_k\bX_i\partial_i\bY_l)\bU_k\bpsi_l
	\notag\\
	&= \underbrace{\sum_{klmp} g^{mp}\partial_p(\Gamma_{mk}^l)\bU_k\bpsi_l}_{V}
	- \underbrace{\sum_{iklmp} g^{mp} \partial_p\partial_m\partial_k\bX_i\partial_i\bY_l\bU_k\bpsi_l}_{VI},
	\notag
	\end{align*}
	\begin{align*}
	VI &= (\cite{inoue1977existence})
	= \sum_{iklmp} g^{mp} \partial_p(\sum_{q}\Gamma_{mk}^q\partial_q\bX_i)\partial_i\bY_l\bU_k\bpsi_l
	\notag\\
	&= \sum_{iklmpq} g^{mp} \partial_p\Gamma_{mk}^q\underline{\partial_q\bX_i\partial_i\bY_l}\bU_k\bpsi_l
	+ \sum_{iklmpq} g^{mp} \Gamma_{mk}^q\underline{\partial_p\partial_q\bX_i\partial_i\bY_l}\bU_k\bpsi_l
	\notag\\
	&= \underbrace{\sum_{klmp} g^{mp} \partial_p\Gamma_{mk}^l\bU_k\bpsi_l}_{= V}
	+ \sum_{klmpq} g^{mp} \Gamma_{mk}^q\Gamma_{pq}^l\bU_k\bpsi_l.
	\notag
	\end{align*}
	It follows
	\begin{align*}
	I &= - \sum_{klmpq} g^{mp} \Gamma_{mk}^q\Gamma_{pq}^l\bU_k\bpsi_l.
	\end{align*}
	Hence,
	\begin{multline}
	\nabla\bu:\nabla\bphi
	= \sum_{ijk} g^{jk}\partial_j\bU_{i}\partial_k\bpsi_i
	- \sum_{ijkl} g^{kl}\Gamma_{li}^j\partial_k\bU_{i}\bpsi_j
	+ \sum_{ijkl} g^{kl}\Gamma_{li}^j\bU_{i}\partial_k\bpsi_j
	- \sum_{ijklm} g^{kl}\Gamma_{li}^m\Gamma_{km}^j\bU_{i}\bpsi_j.
	\notag
	\end{multline}
	
	Similarly, we get
	\begin{align}
	\nabla\bu^T:\nabla\bphi
	&= \sum_{ij} \partial_i\bu_{j}\partial_j\bpsi_i
	= \sum_{ij} \frac{d}{d\bx_i}(\nabla\bX\bU)_j\frac{d}{d\bx_j}(\nabla\bY^T\bpsi)_i
	\notag\\
	&= \sum_{ijkl} \frac{d}{d\bx_i}(\partial_k\bX_j\bU_k)\frac{d}{d\bx_j}(\partial_i\bY_l\bpsi_l)
	\notag\\
	&= \sum_{ijklmp} (
	\partial_m\partial_k\bX_j\bU_k 
	+ \partial_k\bX_j\partial_m\bU_k
	)\partial_i\bY_m
	(
	\partial_{j}\partial_i\bY_l\bpsi_l 
	+ \partial_i\bY_l\partial_p\bpsi_l\partial_{j}\bY_p)
	\notag\\
	&= \underbrace{
	\sum_{iklmp} 
	\partial_i\bY_m\partial_m\partial_k\bX_j
	\partial_{j}\partial_i\bY_l\bU_k\bpsi_l 
	}_{I'}
	+ \underbrace{
	\sum_{iklmp} \partial_i\bY_m\partial_i\bY_l
	\partial_m\partial_k\bX_j\partial_{j}\bY_p\bU_k 
	\partial_p\bpsi_l
	}_{II'}
	\notag\\
	&\quad +\underbrace{
	\sum_{iklmp} 
	\partial_i\bY_m\partial_k\bX_j
	\partial_{j}\partial_i\bY_l
	\partial_m\bU_k\bpsi_l
	}_{III'}
	+ \underbrace{
	\sum_{iklmp} 
	\partial_i\bY_m
	\partial_i\bY_l\partial_k\bX_j\partial_{j}\bY_p
	\partial_m\bU_k\partial_p\bpsi_l
	}_{IV'},
	\notag
	\end{align}
	\begin{align*}
	II' &= \sum_{klmp} g^{ml}\Gamma_{mk}^p\bU_k\partial_p\bpsi_l,
	\end{align*}
	\begin{align*}
	IV' &= \sum_{klmp} g^{ml}\delta_{pk}
	\partial_m\bU_k\partial_p\bpsi_l
	= \sum_{klm} g^{ml}\partial_m\bU_k\partial_k\bpsi_l,
	\end{align*}
	\begin{align*}
	III' &= \sum_{iklmp}
	\partial_i\bY_m
	\left(\partial_i
	(\partial_k\bX_j
	\partial_{j}\bY_l)
	- \partial_i\partial_k\bX_j
	\partial_{j}\bY_l
	\right)
	\partial_m\bU_k\bpsi_l
	= -\sum_{iklm} \partial_i\bY_m
	\Gamma_{ik}^l
	\partial_m\bU_k\bpsi_l,
	\end{align*}
	\begin{align*}
	I' &= \sum_{ijklmp} 
	\partial_i\bY_m
	\left(
	\partial_i(\partial_m\partial_k\bX_j
	\partial_{j}\bY_l)
	- \partial_i\partial_m\partial_k\bX_j
	\partial_{j}\bY_l
	\right)
	\bU_k\bpsi_l\\
	&= \underbrace{\sum_{iklmp} \partial_i\bY_m
	\partial_i\Gamma_{mk}^l\bU_k\bpsi_l}_{V'}
	- \underbrace{\sum_{ijklmp} \partial_i\bY_m \partial_i\partial_m\partial_k\bX_j\partial_j\bY_l\bU_k\bpsi_l}_{VI'}, 
	\end{align*}
	\begin{align*}
	VI' &= (\cite{inoue1977existence})
	=\sum_{ijklmp} \partial_i\bY_m
	\partial_i(\sum_{q}\Gamma_{mk}^q\partial_q\bX_j)
	\partial_j\bY_l\bU_k\bpsi_l
	\\
	&= \sum_{ijklmpq} \partial_i\bY_m \partial_i\Gamma_{mk}^q
	\underline{\partial_q\bX_j\partial_j\bY_l}
	\bU_k\bpsi_l
	+ \sum_{ijklmpq} \partial_i\bY_m \Gamma_{mk}^q
	\underline{\partial_i\partial_q\bX_j\partial_j\bY_l}
	\bU_k\bpsi_l
	\\
	&= \underbrace{\sum_{iklmp} \partial_i\bY_m \partial_i\Gamma_{mk}^l
	\bU_k\bpsi_l}_{= V'}
	+ \sum_{iklmpq} \partial_i\bY_m \Gamma_{mk}^q\Gamma_{iq}^l
	\bU_k\bpsi_l
	\end{align*}
	It follows
	\begin{align*}
	I' &= - \sum_{iklmpq} \partial_i\bY_m \Gamma_{mk}^q\Gamma_{iq}^l
	\bU_k\bpsi_l.
	\end{align*}
	Therefore,
	\begin{multline*}
	\nabla\bu^T:\nabla\bphi
	= \sum_{ijk} g^{jk}\partial_k\bU_i\partial_i\bpsi_j
	-\sum_{iklm} \partial_l\bY_k
	\Gamma_{il}^j \partial_k\bU_i\bpsi_j
	+ \sum_{ijkl} g^{jl}\Gamma_{li}^k\bU_i\partial_k\bpsi_j
	- \sum_{ijklm} \partial_k\bY_l \Gamma_{li}^m\Gamma_{km}^j
	\bU_i\bpsi_j
	\end{multline*}
	
	Finally, we get
	\begin{multline}
	\left\langle \mathcal{L}\bU, \bpsi\right\rangle
	:=\int_{\Omega_F(\tau)} 2\,\D\bu:\D\bphi
	= \int_{\Omega_F(\tau)}
	\left(
	\nabla\bu:\nabla\bphi
	+ \nabla\bu^T:\nabla\bphi
	\right) \\
	= 
	\frac{1}{2}\int_{\Omega_F(\tau)} \big(\sum_{ijk}
	(g^{jk}\partial_j\bU_{i}\partial_k\bpsi_i
	+ g^{jk}\partial_k\bU_i\partial_i\bpsi_j)
	- \sum_{ijkl} (g^{kl}+\partial_l\bY_k)\Gamma_{li}^j\partial_k\bU_{i}\bpsi_j\\
	+ \sum_{ijkl} (g^{kl}\Gamma_{li}^j\bU_{i}\partial_k\bpsi_j
	+ g^{jl}\Gamma_{li}^k\bU_i\partial_k\bpsi_j)
	- \sum_{ijklm} (g^{kl}+\partial_k\bY_l)\Gamma_{li}^m\Gamma_{km}^j\bU_{i}\bpsi_j
	\big).
	\notag
	\end{multline}
	
\end{proof}

\section{Conflict of Interest Statement}

On behalf of all authors, the corresponding author states that there is no conflict of interest.

\section{Acknowledgements}
The authors would like to thank the anonymous referee for her/his comments that helped us to
improve the manuscript.

\bibliographystyle{siamplain}

\end{document}